\title[Minimality of strong foliations of Anosov diffeomorphisms]{Minimality of strong foliations of Anosov and partially hyperbolic diffeomorphisms}
\author[A. Avila, S. Crovisier, A. Wilkinson]{A. Avila, S. Crovisier and A. Wilkinson}
\date{\today}
\thanks{A.A. was supported by the SNSF.
S.C. was partially supported by the ERC project 692925 NUHGD.
A.W. was supported by  was supported by NSF grants DMS-1900411 and DMS-2154796.
The authors are grateful to the Institut Henri Poincar\'e for its hospitality
and would like to thank A. Eskin, R. Potrie and Z. Zhang for helpful conversations and comments.
}
\address{Artur Avila \newline
\rm Institut f\"ur Mathematik,
Universit\"at Z\"urich\newline
Winterthurerstrasse 190,
CH-8057 Z\"urich,
Switzerland
\newline \& IMPA, Estrada Dona Castorina 110, Rio de Janeiro, Brazil.}
\address{Sylvain Crovisier \newline
\rm CNRS - Laboratoire de Math{\'e}matiques d'Orsay, UMR 8628\newline
Universit{\'e} Paris-Saclay, 91405 Orsay Cedex, France.}
\address{Amie Wilkinson \newline
\rm Department of Mathematics,
University of Chicago, \newline 5734 S. University Avenue Chicago, Illinois 60637, USA.}
\theoremstyle{plain}
\newtheorem*{theorem*}{Theorem}
\newtheorem{lemma}{Lemma}[section]
\newtheorem{corollary}[lemma]{Corollary}
\newtheorem{proposition}[lemma]{Proposition}
\newtheorem{problem}{Problem}
\newtheorem*{corollary*}{Corollary}
\newtheorem*{claim}{Claim}
\newtheorem{definition}[lemma]{Definition}
\newtheorem{maintheorem}{Theorem} % different counter, when planning to number the rest within the section
\newtheorem{remark}[lemma]{Remark}
\newtheoremstyle{vThm*}%
{}{}%
{\itshape}%
{-3pt}{\bfseries}%
{}{ }%
{\thmnote{#3}}%
\theoremstyle{vThm*}
\newtheorem*{nThm*}{}
\def\eps{\varepsilon}
\def\Diff{\operatorname{Diff} }
\def\title{\em}
\def\bar{\overline}
\def\cW{\mathcal{W}}
\def\cA{\mathcal{A}}
\def\cD{\mathcal{D}}
\def\cF{\mathcal{F}}
\def\cF{\mathcal{F}}
\def\cU{\mathcal{U}}
\def\hW{{\widehat{\mathcal{W}}}}
\def\P{\mathcal{P}}
\def\cH{\mathcal{H}}
\def\cP{\mathcal{P}}
\def\hW{\widehat{\mathcal{W}}}
\def\transverse{\,\raise2pt\hbox to1em{\hfil$\top$\hfil}\hskip -1em \hbox
to1em{\hfil$\cap$\hfil}\,} 
\newcommand\R{\mathbb R}
\newcommand\RR{{\mathbb R}}
\newcommand\TT{{\mathbb T}}
\newcommand\ZZ{{\mathbb Z}}
\newcommand\NN{{\mathbb N}}
\newcommand\SL{{\operatorname{SL}}} 
\newlength{\figboxwidth} \setlength{\figboxwidth}{5.8in}
\def\W{\mathcal{W}}
\begin{document}

\maketitle

\begin{abstract}
 We study the topological properties of expanding invariant foliations of $C^{1+}$ diffeomorphisms, in the context of partially hyperbolic diffeomorphisms and laminations with $1$-dimensional center bundle.  

In this first version of the paper, we introduce a property we call {\em s-transversality} of a partially hyperbolic lamination with $1$-dimensional center bundle, which is robust under $C^1$ perturbations. We  prove that under a weak expanding condition on the center bundle (called {\em some hyperbolicity}, or ``SH''),  any s-transverse partially hyperbolic lamination contains a disk tangent to the center-unstable direction (Theorem~\ref{t=s-transPartHyp}).

We obtain several corollaries, among them:  if $f$ is a $C^{1+}$ partially hyperbolic Anosov diffeomorphism with $1$-dimensional expanding center, and the (strong) unstable foliation $\cW^{uu}$ of $f$ is minimal, then  $\cW^{uu}$ is robustly minimal under $C^1$-small perturbations, provided that the stable and strong unstable bundles are not jointly integrable (Theorem~\ref{t=mainopen}).   

Theorem~\ref{t=mainopen} has applications in our upcoming work \cite{A5} with Eskin, Potrie and Zhang, in which we prove that on $\TT^3$, any $C^{1+}$partially hyperbolic Anosov diffeomorphism with $1$-dimensional expanding center has a minimal strong unstable foliation, and has a unique $uu$-Gibbs measure  provided that the stable and strong unstable bundles are not jointly integrable.
%The hypotheses of this theorem hold, for example, for any smooth, $C^1$-small perturbation of a linear Anosov diffeomorphism (or its inverse).

In a future work, we address the density (in any $C^r$ topology) of minimality of strong unstable foliations for $C^{1+}$ partially hyperbolic diffeomorphisms with $1$-dimensional center  and the SH property.  Our ultimate goal is to prove that, on any closed manifold, among the $C^r$ ($r>1$) partially  hyperbolic  Anosov diffeomorphisms with $1$-dimensional expanding center, there is a $C^1$-open and $C^r$-dense set of diffeomorphisms with minimal $\cW^{uu}$ foliation.

\end{abstract}

\setcounter{tocdepth}{1}
\tableofcontents 

%!TEX root = ACWGenericMinimalFev2025Part1.tex

\section*{Introduction}

Let  $f\colon M\to M$ be a diffeomorphism of a closed, connected Riemannian manifold $M$.  Assume that there exists an  $f$-invariant foliation $\cF^u$ of $M$ with $C^1$ leaves that is {\em expanding}, meaning that there exists $N\in\NN$ such that for all $v \in T\cF^u$ tangent to a leaf of $\cF^u$,  we have $\|Df^N(v)\| \geq 2 \|v\|$ (a given diffeomorphism can have more than one expanding foliation).  Such foliations arise in the study of Anosov and partially hyperbolic diffeomorphisms as well  as in more general scenarios. The dynamics of such foliations have fundamental implications for the dynamics of the underlying diffeomorphisms.  This paper addresses the  dynamics of expanding foliations.

The basic building block  for the topological dynamics of such a foliation $\cF^u$ is an  {\em $\cF^u$-lamination}, which is a (non-necessarily invariant) compact set consisting of a union of leaves of $\cF^u$. The $\cF^u$-lamination is \emph{minimal} if every leaf is dense in the lamination; it is \emph{dynamically minimal} if it is invariant and the orbit of every of its leaf is dense in the lamination.

The fundamental problem to address first is then the following.
\begin{problem}\label{p=orbit closure} Classify the $\cF^u$-laminations for a fixed $\cF^u$.  Is there a unique (non-empty) $\cF^u$-lamination? More generally, when are the minimal $\cF^u$-laminations smooth submanifolds of $M$?
\end{problem}
We remark that  if $f$ has an attractor $\Lambda$, then $\Lambda$ is an $\cF^u$-lamination, but it is also possible to have nontrivial $\cF^u$-laminations without an attractor. For example, the automorphism $f_0 \colon (x,y,z)\mapsto (2x + y, x+ y, z)$ on $\TT^3 = \TT^2\times \TT$ has an expanding foliation $\cF^u$ tangent to the expanding eigenspace, and for any $z_0\in\TT$, 
the set $\TT^2\times \{z_0\}$ is an $\cF^u$-lamination  (which is not an attractor, as $1$ is an eigenvalue of $f_0$).  Notice that for this example, the minimal (under inclusion) $\cF^u$-laminations are smooth submanifolds of $\TT^3$.  

This illustrates a more general phenomenon in homogeneous dynamics.  If $G$ is a connected Lie group and $\Gamma\subset G$ is a cocompact lattice, then any automorphism $A\colon G \to  G$ preserving $\Gamma$ and $a\in G$ determine an affine diffeomorphism $f$ of $M=G/\Gamma$ via $f(g\Gamma) = a\cdot A(g)\Gamma$.   If  ${\mathfrak e}^u$ is the Lie subalgebra  generated by  any collection of expanding  generalized eigenspaces  of the induced map   $\hat f_0\colon \mathfrak g\to \mathfrak g$, then the foliation  $\cF^u$ tangent to  ${\mathfrak e}^u$  is expanding.  Any such subalgebra ${\mathfrak e}^u$ is unipotent, and the work of Dani, Margulis, Ratner, Starkov and Shah 
\cite{Dani, DaniMargulis, Margulis1, Margulis2, Margulis3, Ratner4, Shah, Starkov1, Starkov2}  implies that all  minimal $\cF^u$-laminations are of the form $H/\Gamma$, where $H<G$ is a closed subgroup: in particular, they are submanifolds. Thus Problem~\ref{p=orbit closure} may also  be seen as proposing a nonlinear version of this  homogeneous orbit closure theorem for the dynamics of expanding foliations.

From the perspective of the topological dynamics of $f$, Problem~\ref{p=orbit closure}  has fundamental implications; for example {\em the minimality of an $f$-invariant  foliation  $\cF^u$ implies that  $f$ itself is topologically mixing}.
\medskip

Next, studying $\cF^u$ from a measure-theoretic angle,  fundamental objects are the {\em $\cF^u$-states}, which are $f$-invariant probability measures whose disintegration along the leaves of $\cF^u$ is absolutely continuous with respect to (leafwise) volume.  $\cF^u$-states were first defined by Pesin and Sinai \cite{pesinsinai}, where they proved that $\cF^u$-states always exist. The properties of  $\cF^u$-states (also called u-Gibbs measures) for $f$ are key to understanding the dynamics of $f$; for example, if there is a unique $\cF^u$-state $\mu$, then it is automatically a {\em physical measure} (i.e.,  there is a full volume set  $B\subseteq M$ such that for $x\in B$ and any continuous $\psi$, $\lim_{n\to\infty} \frac1n \sum_0^{n-1} \psi(f^j(x)) = \int \psi\, d\mu $).  Further statistical implications of the uniqueness of $\cF^u$-states are derived in \cite{Dolgustates}.

Problem~\ref{p=orbit closure}  has the following measure-theoretic analogue.
\begin{problem}\label{p=measure classification} Classify the $\cF^u$-states for a fixed $\cF^u$.  Is there a unique $\cF^u$-state? When are  the  ergodic $\cF^u$-states  the smooth invariant measures supported on smooth submanifolds?
\end{problem}
In the homogeneous setting, this problem was solved by Ratner, in her measure classification theorems \cite{Ratner1, Ratner2, Ratner3}, which imply in particular that any $\cF^u$-state is the induced Lebesgue-Haar measure on a homogeneous submanifold.  These measure classification results give the classication of orbit closures above (as one can by averaging  build invariant measures supported on any  orbit closure), thus solving Problem~\ref{p=orbit closure} in this context.
As with Ratner's approach in the homogeneous setting,  solutions to Problem~\ref{p=measure classification} can be used to answer Problem~\ref{p=orbit closure}.

  In the other direction, as demonstrated in our work below, solving Problem~\ref{p=orbit closure}  can be also  a key step toward solving Problem~\ref{p=measure classification}, as the support of any $\cF^{u}$-state is an $\cF^{u}$-lamination.  In the case where $\cF^u$ is the unstable foliation of a $C^2$  transitive Anosov diffeomorphism $f$, we have a complete understanding of its dynamics, going back to the work of Sinai, Ruelle and Bowen: the foliation $\cF^u$ is minimal, and moreover there is a unique $\cF^u$-state (the SRB measure for $f$): when $f$ preserves volume, this measure is volume.  Note that by replacing $f$ by $f^{-1}$, one can similarly classify the dynamics of stable foliations.
\medskip

Beyond Anosov diffeomorphisms, the next natural setting  is when  $f\colon M\to M$ is a partially hyperbolic diffeomorphism with  $Df$-invariant splitting $TM = E^{uu}\oplus E^c\oplus E^{ss}$, and $\cF^u = \cW^{uu}$ is tangent to the (strong) unstable bundle $E^{uu}$. 
The dynamics of $\cW^{uu}$ for $f$ partially hyperbolic have been studied extensively, especially in recent years, focusing largely, but not exclusively, on the case where $\dim E^c = 1$, see e.g.,  \cite{pesinsinai, BDU, Dolgustates, PujSam,  HHU07,GMK, NH, NOH,  GYYZ,  katz, CDP, HUY22, CR, ALOS, EPZ}.   
Denote by $\cP\cH^r(M)$ the set of all $C^r$, partially hyperbolic diffeomorphisms of $M$, and by  $\cP^k\cH^r(M)$ the set of all $f\in \cP\cH^r(M)$ with $\dim E^c = k$.  If the superscript $r$ is not specified, we mean $r=\infty$.

The topological questions in Problem~\ref{p=orbit closure} have been partially  addressed in the case where $\dim E^c=1$, in the works \cite{BDU, HHU07}, where they show that  $C^1$ open and -densely in  $\cP^1\cH(M)$,  at least one of the two foliations $\cW^{uu}$, $\cW^{ss}$ is minimal.  Some of these ideas were generalized by Pujals and Sambarino \cite{PujSam}, who showed (for any $\dim E^c$) that a  weak expansion property called SH (``some hyperbolicity") in $E^c$  (which we describe in detail below) implies that if $\cW^{ss}$ is minimal, then it is {\em robustly minimal} (meaning the $\cW^{ss}$ foliation for any $C^1$ close diffeomorphism is minimal).  The question of whether robust  minimality of {\em both} foliations should hold densely in  $\cP^1\cH(M)$, even when $\dim(M)=3$,  was not settled.

Indeed, a particularly simple situation where this question has remained open, mentioned in several works  (e.g.  \cite[Problem 1.10]{BDU}, \cite[Question 1.1]{NH}, \cite[Conjecture 1.2]{GMK}) is the case where $f\colon \TT^3\to \TT^3$ is both Anosov and partially hyperbolic, with $E^c$ uniformly expanded.   An example is the family  $f_{\varepsilon}(x, y, z)=  (2 x+y+\varepsilon \sin (2 \pi x), x+2 y+z+\varepsilon \sin (2 \pi x), y+z)$ on $\TT^3$, for  $\varepsilon$ small (see \cite{GMK}).

 In this setting on $\TT^3$, the minimality of the foliations $\cW^u$ and $\cW^{ss}$ tangent to $E^u:=E^{uu}\oplus E^c$ and $E^{ss}$ follows from the Anosov property, but the existence of a single example with  $\cW^{uu}$ robustly minimal remained unknown.
 In numerical experiments on  $f_\varepsilon$ and other families, Gogolev, Maimon and Kolmogorov \cite{GMK} found  convincing evidence for  minimality of $\cW^{uu}$ for small $\varepsilon\neq 0$.    They also found numerical evidence of the uniqueness of $\cW^{uu}$-states for these examples.
\medskip

In this paper, we consider first the {\em Anosov diffeomorphisms with expanding $1$-d center}, that is,  the  $f\in \cP^1\cH(M)$ such that $E^c$ is uniformly expanded.  This is a $C^1$ open class, and examples exist in all dimensions $\geq 3$, including these examples in  \cite{GMK}.  For  a perturbation $f$ of {\em any} affine Anosov diffeomorphism on $\TT^3$, either $f$ or $f^{-1}$ belongs to  $\cP^1\cH(\TT^3)$.  In this context, an ultimate aim of a further version of this paper  is to establish the following.
\begin{maintheorem}\label{t=ACWmain} For any closed connected manifold $M$ and $r>1$,  in the space $\cA^r(M)$ of $C^r$  transitive Anosov diffeomorphisms of $M$ with expanding 1-d center, there is a $C^1$ open and $C^r$ dense set with minimal foliation $\mathcal{W}^{u u}$.
\end{maintheorem}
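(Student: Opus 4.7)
The plan is to establish the $C^1$-open and $C^r$-dense set of diffeomorphisms with minimal $\cW^{uu}$ as
\[
\cO^r := \{f \in \cA^r(M) : \cW^{uu}_f \text{ is minimal and } E^{ss}_f, E^{uu}_f \text{ are not jointly integrable}\},
\]
and verify the two topological properties separately.

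\textbf{$C^1$-openness of $\cO^r$.} Non-joint integrability of $E^{ss}$ and $E^{uu}$ is the failure of a closed condition on $su$-holonomies and is therefore $C^1$-open. Conditional on non-joint integrability, Theorem~\ref{t=mainopen} already provides $C^1$-robust minimality of $\cW^{uu}$ for diffeomorphisms in $\cA^r(M)$. Combining these two facts gives that $\cO^r$ is $C^1$-open, with no further work.

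\textbf{$C^r$-density of $\cO^r$.} Fix $f \in \cA^r(M)$ and $\varepsilon > 0$. I would produce $g \in \cO^r$ with $d_{C^r}(f,g) < \varepsilon$ in two sub-steps. \emph{Sub-step 1: breaking joint integrability.} By a standard local $su$-loop surgery, localized in a small neighborhood of a hyperbolic periodic point $p$ of $f$, I would modify $f$ by a $C^r$-small bump so as to alter the holonomy cocycle along some short closed $su$-path based at $p$. Joint integrability of $E^{ss}$ and $E^{uu}$ forces all such holonomy cocycles to vanish, so a suitably chosen such perturbation breaks joint integrability; it is chosen small enough to preserve both the Anosov splitting and the expanding $1$-d center bundle, and so yields $g \in \cA^r(M)$. \emph{Sub-step 2: deducing minimality of $\cW^{uu}_g$.} Let $\Lambda \subseteq M$ be any minimal $\cW^{uu}_g$-sublamination (produced by Zorn's lemma). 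Non-joint integrability of $E^{ss}_g, E^{uu}_g$ translates into $s$-transversality of $\Lambda$ in the sense introduced in the paper. Since $E^c_g$ is uniformly expanded, $g$ satisfies the SH condition trivially. Applying Theorem~\ref{t=s-transPartHyp}, the lamination $\Lambda$ contains a disk $D$ tangent to the center-unstable bundle $E^{cu}_g = E^{uu}_g \oplus E^c_g$. In the $1$-d expanding-center Anosov setting this coincides with the full unstable bundle $E^u_g$, so $D$ is a piece of an unstable manifold $\cW^u_g(x)$ for some $x \in \Lambda$. Transitivity of the Anosov $g$ gives $\overline{\cW^u_g(x)} = M$, and closedness of $\Lambda$ then forces $\Lambda = M$. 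Hence every minimal $\cW^{uu}_g$-lamination is all of $M$, i.e., $\cW^{uu}_g$ itself is minimal and $g \in \cO^r$.

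\textbf{Main obstacle.} The principal difficulty will be Sub-step 1 of the density argument: producing a genuine $C^r$-small perturbation that breaks joint integrability of $E^{ss}$ and $E^{uu}$ while remaining in $\cA^r(M)$. This requires precise control of the support, amplitude, and direction of the local bump so that (i) the Anosov splitting persists, (ii) the expanding $1$-d center sub-bundle persists, and (iii) the induced change in $su$-holonomy along the chosen closed path is nontrivial; the last requirement entails a careful computation of the first-order variation of $su$-holonomy with respect to $C^r$ perturbations of $f$. Once this is in place, the remaining structural ingredients---non-joint-integrability $\Rightarrow$ $s$-transversality of $\cW^{uu}$-laminations, existence of a $cu$-disk from Theorem~\ref{t=s-transPartHyp}, and density of unstable leaves from transitivity---assemble cleanly, and the $C^1$-openness is immediate from Theorem~\ref{t=mainopen}.
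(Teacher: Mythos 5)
The paper does not actually prove Theorem~\ref{t=ACWmain} in this version; as stated explicitly in the introduction, the present work only establishes the \emph{openness} half (Theorem~\ref{t=mainopen}), and the \emph{density} half is deferred to a subsequent version, to be handled via a probabilistic perturbative argument in blenders in the spirit of~\cite{A1}. Your openness argument matches the paper: non-joint-integrability is $C^1$-open, and Theorem~\ref{t=mainopen} makes minimality robust on this open set.

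The gap is in your density Sub-step~2, specifically in the sentence ``Non-joint integrability of $E^{ss}_g, E^{uu}_g$ translates into $s$-transversality of $\Lambda$ in the sense introduced in the paper.'' The paper's criterion, Proposition~\ref{p=stransversecriterion}, converts non-integrability of $E^{uu}\oplus E^s$ into s-transversality \emph{only under the hypothesis that $\cW^{uu}$ is already dynamically minimal}, i.e.\ that $\Lambda=M$. That hypothesis is what you are trying to prove, so invoking it at this stage is circular. Concretely, Lemma~\ref{l.non-joint-integrability} shows that non-integrability produces a crossing configuration of brushes near \emph{some} point of $M$; but s-transversality of a sub-lamination $\Lambda$ requires such configurations near \emph{every} point of $\Lambda$. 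Without knowing that the leaves of $\Lambda$ accumulate on the locus where non-integrability is visible (which is exactly what minimality of $\cW^{uu}$ would give you), there is no mechanism to propagate the crossing configuration to $\Lambda$: a proper minimal sub-lamination could live entirely in a region where the local $su$-holonomies happen to commute. This is why the paper's Theorem~\ref{t=mainopen} \emph{assumes} minimality of $\cW^{uu}$ and only upgrades it to \emph{robust} minimality; it does not produce minimality from scratch via non-integrability. The planned density argument in the paper is correspondingly more elaborate (blenders, a probabilistic perturbation scheme), rather than a single $su$-loop surgery followed by an application of Theorem~\ref{t=s-transPartHyp}. Your Sub-step~1 (breaking joint integrability by a $C^r$-small bump) is, as you note, a nontrivial point on its own for $r>1$, but the central missing idea is the one described above: you need some independent way to force minimality (or s-transversality of every candidate minimal sub-lamination), and non-joint-integrability of the global bundles alone does not supply it.
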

A proof of such a result naturally breaks into two parts:  openness and density of minimality.  The density will be treated in a subsequent version of this paper, where we plan to construct  a residual set in $\cA^r$ on which minimality of $\cW^{uu}$ holds, using a probabilistic perturbative argument in blenders, reminiscent of our methods in \cite{A1}.  

In this version of the paper (Part 1 of the argument), we establish a criterion for openness using a property called {\em s-transversality}, which we introduce here.  In the case where $f$ is partially hyperbolic, if  $\cW^{uu}$ is minimal,  then s-transversality holds  if and only if $E^{uu}\oplus E^{ss}$ is not integrable.   
In our first main result, employing a novel ``topological drift" argument, we show that  s-transversality, which is  $C^1$ robust, implies the minimality of $\cW^{uu}$,  giving the desired openness criterion for $\cW^{uu}$ minimality.   In the context of Anosov diffeomorphisms, we then obtain the following.

\begin{maintheorem}\label{t=mainopen} Let $f\in \cA^r(M)$, for some $r>1$.  Suppose that the foliation  $\cW^{uu}$ is minimal and $E^{uu}\oplus E^s$ is not integrable.  Then $\cW^{uu}$ is $C^1$ robustly minimal in $\cA^r(M)$. 
\end{maintheorem}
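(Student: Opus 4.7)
I plan to deduce Theorem~\ref{t=mainopen} from Theorem~\ref{t=s-transPartHyp} using the $C^1$ openness of the relevant hypotheses together with the classical spreading of unstable disks for a transitive Anosov diffeomorphism.

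First I would observe that the hypotheses on $f$---minimality of $\cW^{uu}_f$ and non-integrability of $E^{uu}_f\oplus E^s_f$---translate, via the characterization recorded in the introduction, into the statement that the full manifold $M$, viewed as a $\cW^{uu}_f$-lamination, is s-transverse. Since $E^c_f$ is uniformly expanded, $f$ trivially satisfies SH. Both s-transversality and SH are $C^1$-open, as is non-integrability of $E^{uu}_f\oplus E^s_f$; moreover, structural stability of Anosov diffeomorphisms preserves transitivity, so nearby maps remain in $\cA^r(M)$. Therefore there exists a $C^1$-neighborhood $\cU\subset\cA^r(M)$ of $f$ on which all these properties persist simultaneously. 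Fix $g\in\cU$.

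Second, I would produce, inside the closure of any $\cW^{uu}_g$-leaf, a disk tangent to the center-unstable direction. Let $L$ be a leaf of $\cW^{uu}_g$ and set $\Lambda:=\overline{L}$; by local product structure, $\Lambda$ is a closed $\cW^{uu}_g$-lamination. The next step is to show that $\Lambda$ itself---or, if this requires a preliminary reduction, a minimal-under-inclusion closed nonempty $\cW^{uu}_g$-sublamination $\Lambda^\ast\subset\Lambda$ obtained by Zorn's lemma---inherits s-transversality from the ambient $M$. Applying Theorem~\ref{t=s-transPartHyp} to $\Lambda^\ast$, using SH for $g$, I obtain a disk $D\subset\Lambda^\ast\subset\overline{L}$ tangent to $E^{uu}_g\oplus E^c_g$. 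In the Anosov setting, $E^{uu}_g\oplus E^c_g=E^u_g$, so $D$ is genuinely an unstable disk for $g$.

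Third, I would invoke the Anosov dynamics. A transitive Anosov diffeomorphism is topologically mixing, and the classical spreading argument for unstable disks---pick a small box neighborhood $U$ around a point of $D$, apply mixing to obtain $g^n(U)\cap V\neq\emptyset$ for any nonempty open $V\subset M$ and all sufficiently large $n$, then use local stable holonomy together with the exponential contraction of $g^n$ along $\cW^s_g$ to produce $y\in D$ with $g^n(y)\in V$---shows that $g^n(D)$ becomes dense in $M$ as $n\to\infty$. Since $D\subset\overline{L}$ and $g$ is a homeomorphism, $g^n(D)\subset g^n(\overline{L})=\overline{g^n(L)}$, so $\overline{g^n(L)}=M$: the leaf $g^n(L)$ is dense in $M$. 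As $g^{-n}$ is a homeomorphism, density transfers back to $L=g^{-n}(g^n(L))$, so every $\cW^{uu}_g$-leaf is dense in $M$. This proves the minimality of $\cW^{uu}_g$ for every $g\in\cU$, as desired.

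The main obstacle I expect is the transfer of s-transversality from the ambient manifold $M$ to a proper closed sublamination $\Lambda^\ast$. This hinges on the precise definition of s-transversality introduced in this paper: one would want to show that s-transversality is witnessed by a local configuration whose validity persists inside any sufficiently rich closed $\cW^{uu}_g$-saturated subset of $M$. The other ingredients---$C^1$ robustness of s-transversality and SH, and the classical spreading of unstable disks under transitive Anosov dynamics---are either stated in the introduction or well-known.
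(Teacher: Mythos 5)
There is a genuine gap in the second step. You set $\Lambda := \overline{L}$ for a single leaf $L$ of $\cW^{uu}_g$ and then extract by Zorn a minimal closed $\cW^{uu}_g$-sublamination $\Lambda^\ast \subset \Lambda$, to which you apply Theorem~\ref{t=s-transPartHyp}. But $\overline{L}$ is not $g$-invariant (its image is $\overline{g(L)}$, the closure of a different leaf), so neither is $\Lambda^\ast$ in general, whereas Theorem~\ref{t=s-transPartHyp} is stated and proved for \emph{$f$-invariant} s-transverse $uu$-laminations: the definition of s-transversality, the ``waiting disk'' machinery of Proposition~\ref{p.waitingdisk}, and the inductive Proposition~\ref{p=evenspaced} all require forward iteration staying inside $\Lambda$. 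You flag as the expected obstacle the transfer of s-transversality from $M$ to $\Lambda^\ast$ (which in fact does transfer, since the brushes and $\cW^{uu}$-leaves are intrinsic), but the actual obstruction is invariance, and it cannot be repaired by enlarging to an invariant set such as $\omega(L)$ because that set need not sit inside $\overline{L}$, so the disk you would obtain may not lie in the leaf closure you started from.

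The paper avoids this by running the Zorn argument inside $\Lambda = M$ itself, producing a \emph{dynamically minimal $g$-invariant} $uu$-sublamination $\Lambda'\subset M$; invariance is then automatic, Theorem~\ref{t=s-transPartHyp} yields a $cu$-disk $D\subset\Lambda'$, and saturating $D$ by local stable plaques shows $\Lambda'$ is a weak attractor. Transitivity of $g$ (obtained via Proposition~\ref{p=ss minimal}, or in your Anosov setting via structural stability) forces $\Lambda' = M$. This gives only \emph{dynamical} minimality of $\cW^{uu}_g$ on $M$; the passage to minimality (density of every leaf, not just every orbit of leaves) is supplied by Proposition~\ref{p=structure-minimal}, which decomposes $M$ into finitely many periodically permuted minimal $uu$-laminations that must collapse to one by connectedness. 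Your third step --- the classical mixing/spreading argument for unstable disks --- is sound and is a legitimate Anosov-specific substitute for Proposition~\ref{p=structure-minimal}, but it only becomes available once the disk is correctly located inside an invariant sublamination; with the current reduction it has nothing to latch onto. If you replace your second step by the paper's Zorn argument inside $M$ and then either invoke Proposition~\ref{p=structure-minimal} or adapt your spreading argument to show $\Lambda' = M$ and finish via connectedness, the proof goes through.
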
 

In this (first) version of our paper, we establish Theorem~\ref{t=mainopen} as a consequence of a more general result (Theorem~\ref{t=s-transPartHyp}) we state below, which applies in particular to all $f\in\cP^1\cH(M)$ satisfying the SH property in \cite{PujSam} under additional hypotheses, and even to partially hyperbolic laminations in the absence of a global partially hyperbolic structure.  This latter case uses a generalization of some of the fake foliation techniques introduced in \cite{BW}.

We remark that the measure-theoretic questions in Problem~\ref{p=measure classification} have also been studied in the partially hyperbolic setting.  Based on arguments going back to Benoist and Quint in random dynamics \cite{BQ,EM, BRH, EL}, Katz \cite{katz} established a criterion for uniqueness of $\cW^{uu}$-states for  $C^\infty$ diffeomorphisms in dimension $3$ (here $r=\infty$ is essential: for an alternate  approach for finite $k$, see \cite{ALOS}). Katz's  condition, known as ``QNI" (quantitiative non-integrability) can be defined  for  any ergodic probability measure $\mu$  with exponents $\lambda_1 > \lambda_2 > 0 > \lambda_3$  such that $\mu$ is absolutely continuous along Pesin manifolds for the exponent $\lambda_1$; when QNI holds, the disintegration of $\mu$ along  the {\em full} Pesin unstable manifolds (for $\lambda_1, \lambda_2$) must also be absolutely continuous (thus, in this nonuniform setting, ``QNI $\implies$ SRB"). 
 
 The challenge in applying  Katz's criterion in the ($3$-dimensional) partially hyperbolic setting is to translate the QNI condition into a workable geometric condition on the foliations $\cW^{uu}$ and $\cW^{ss}$.  This has been carried out recently by Eskin, Potrie, and Zhang  \cite{EPZ}, who showed in particular that if $f$ is smooth and partially hyperbolic in dimension $3$ and $\mu$ is an ergodic $\cW^{uu}$-state {\em with full support}, then   $f$ satisfies the QNI condition if and only if  the bundle $E^{uu}\oplus E^{ss}$ is not $\ell$-integrable for some order $\ell$.

Our results on s-transversality,  combined with the work  in \cite{katz, EPZ}, shed complete light on the questions in  \cite{BDU, NH, GMK} about  Anosov diffeomorphisms in $\cP^1\cH(\TT^3)$.  In a forthcoming work with Eskin, Potrie and Zhang we show that for {\em  every} Anosov $f\in \cP^1\cH(\TT^3)$, if $E^{uu}\oplus E^{ss}$ is $\ell$-integrable at any order $\ell$, then it is integrable, and then the  Franks-Manning homeomorphism $h$ conjugating to its linearization $f_\ast$ carries all of the invariant foliations of $f$ to the (affine) invariant foliations for $f_\ast$.  Since these affine foliations are all minimal, we obtain in~\cite{A5}:
\emph{Let $f\in \cA^\infty(\TT^3)$.  Then the foliation $\mathcal{W}^{u u}$ is minimal, and either
$E^{uu}\oplus E^{ss}$ is integrable, or
there is a unique $\cW^{uu}$-state.}

Our results on the density of minimality of $\cW^{uu}$ will be addressed in a subsequent version of this paper. 

\section{s-transversality and statements of the main results}

Suppose $f$ is a $C^{1+}$ diffeomorphism of a Riemannian manifold $M$
and $\Lambda\subseteq M$ is a partially hyperbolic, $f$-invariant set
with a splitting
\[T_\Lambda M = E^{uu}\oplus E^c\oplus E^s;\quad \hbox{dim}(E^c)=1.\]
We assume that $\Lambda$ is a uu-lamination, i.e. it is saturated by $\cW^{uu}$ leaves.

In order to state our main results, we will need to define a notion of non-joint integrability of the laminations $\cW^{uu}$ and $\cW^s$, one which we call  s-transversality.  In the case where $\Lambda=M$ (i.e. where $f\colon M\to M$ is partially hyperbolic), s-transversality is implied by the pair of conditions that $\cW^{uu}$ is minimal and  the bundle $E^{uu}\oplus E^s$ is not integrable (see Proposition~\ref{p=stransversecriterion}).  In the general case, there is subtlety  involved in defining the condition correctly.

\begin{figure}[h]
\begin{center}
\includegraphics[scale=.2]{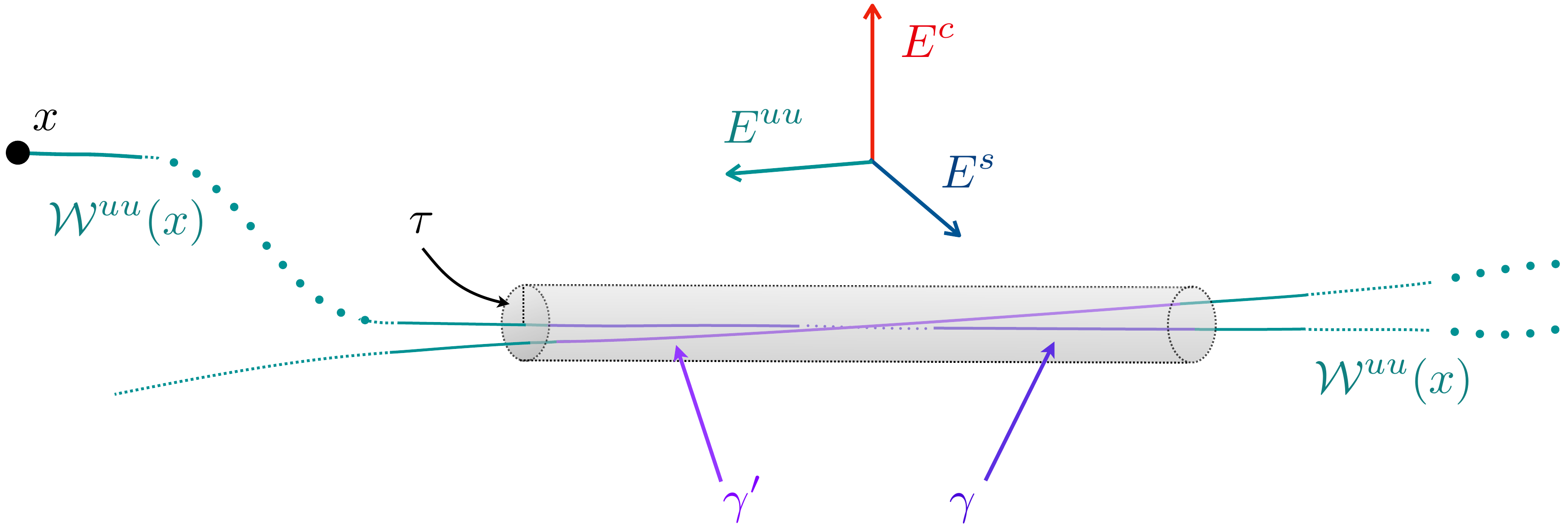}
\caption{Informal definition of s-transversality. }
\label{f=informals-transversality}
\end{center}
\end{figure}

\subsection{The notion of ``self-transversality'' of a foliation}\label{ss.brushes}
The informal definition of s-transversality of $\Lambda$  that we will require  to state our main result is this:  there is a small constant $\tau>0$ such that for any $x\in\Lambda$, there exist paths $\gamma,\gamma'$ in $\cW^{uu}(x)$, at Hausdorff distance within $\tau$ of each other, that cross each other when viewed transversally to the $E^s$ direction, as in Figure~\ref{f=informals-transversality}.

While Figure~\ref{f=informals-transversality} depicts a type of  ``smoothly transverse'' crossing of the two paths, requiring such a nice crossing is too restrictive to be useful. For reasons we will explain, we will a priori need to allow for  paths $\gamma, \gamma'$ lying on a common stable manifold for a large subinterval of their parametrization,  so that the crossing can only be seen by comparing the relative position of the  endpoints of $\gamma, \gamma'$, as depicted in  Figure~\ref{f=accuratecrossing}.

For our definition to be maximally applicable, we need it to be independent of the scale  $\tau$; that is, if it holds for some $\tau$, it should hold for all $\tau'\in(0,\tau)$. To establish such scale-independence requires applying  $f^n$, $n>0$ to paths  (see Proposition~\ref{p.s-transverse}  below), which means (given that the crossings are not assumed to be transverse)  that  we must  allow  the paths $\gamma,\gamma'$ be  arbitrarily long in our definition. 
Thus the  property of $\gamma,\gamma'$ crossing each other ``transverse to the $E^s$ direction" cannot be localized by considering short paths. Even working locally, it is a delicate matter how one defines the relative position of two paths ``transverse to the $E^s$ direction;"  for example,  two nearby curves cannot  be compared by just using linear projections ``approximately in the stable direction,"  as  such projections do not behave well under composition with $f$.

\begin{figure}[h]
\begin{center}
\includegraphics[scale=.25]{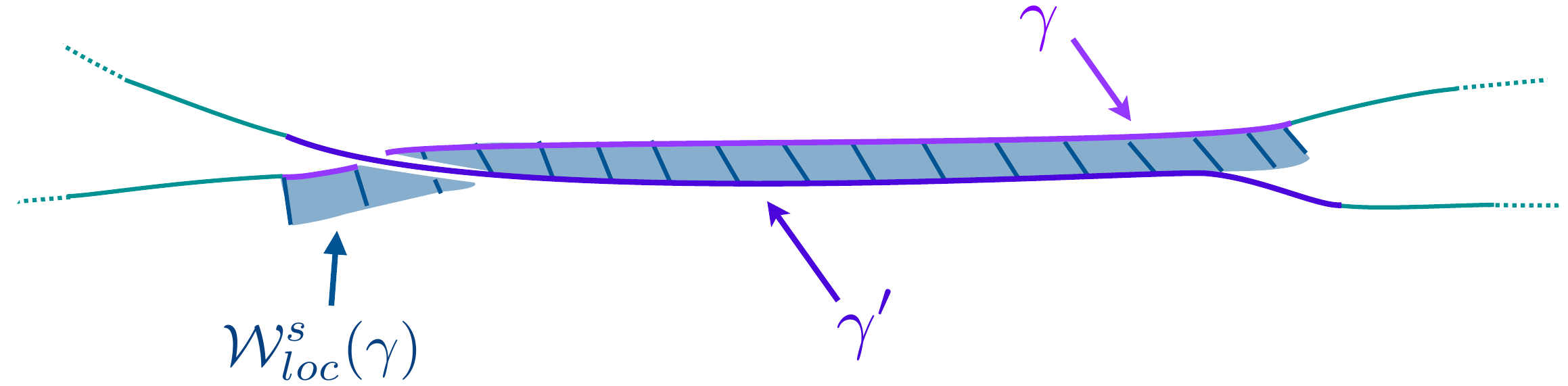}
\caption{ }
\label{f=accuratecrossing}
\end{center}
\end{figure}

Under additional hypotheses, there is a relatively straightforward way to define this crossing property.
 For example, in the case where $f$ is globally partially hyperbolic (so $\Lambda=M$) and dynamically coherent (meaning that the bundles $E^{uu}\oplus E^c$ and $E^c\oplus E^s$ are integrable --  see Section~\ref{s=simplified proof} for definitions), one can fix a smooth disk  $T$ containing $\gamma([0,1])$ and tangent to $E^{uu}\oplus E^c$   and then project $\gamma'$ onto $T$ using the globally-defined $\W^s$-holonomy.  Fixing an orientation on $T$, one can then compare the relative positions of the endpoints of $\gamma$ and $\gamma'$ along leaves of the ($1$-dimensional) center  subfoliation $\cW^c$ of $T$ (which is preserved by $\W^s$ holonomy).  If the relative positions are opposite at the two ends, we say that  $\gamma$ and $\gamma'$ cross (transverse to $\W^s$).  Since the stable foliation is invariant, one can iterate crossing paths to obtain crossing paths, and one can prove that crossing at scale $\tau$ implies crossing at all scales.  Even without dynamical coherence, if $\Lambda = M$, the $1$-dimensionality of  $E^c$ allows for a similar definition of crossing (see Proposition~\ref{p=stransversecriterion}).

We would like to imitate as closely as possible this definition when  $\Lambda\neq M$ is a general $f$-invariant uu-lamination.  In this setting, there is no global invariant  $\W^s$ foliation (not even in a neighborhood of $\Lambda$): local stable manifolds $\W^s_{loc}(x)$ are defined only for $x\in \Lambda$.  Our definition will be based on the picture in Figure~\ref{f=brush}: there the local stable manifold of $\gamma$ divides a $\tau$-neighborhood of $\gamma$ into two components,  and the endpoints of $\gamma'$ lie in different components.  
Thus to define s-transversality, we need to make rigorous this notion of ``two components." We start locally, 
using the notion of {\em brushes}, 
which are  canonically-defined objects built from local stable and unstable manifolds of points in $\Lambda$.

\begin{figure}[h]
\begin{center}
\includegraphics[scale=.2]{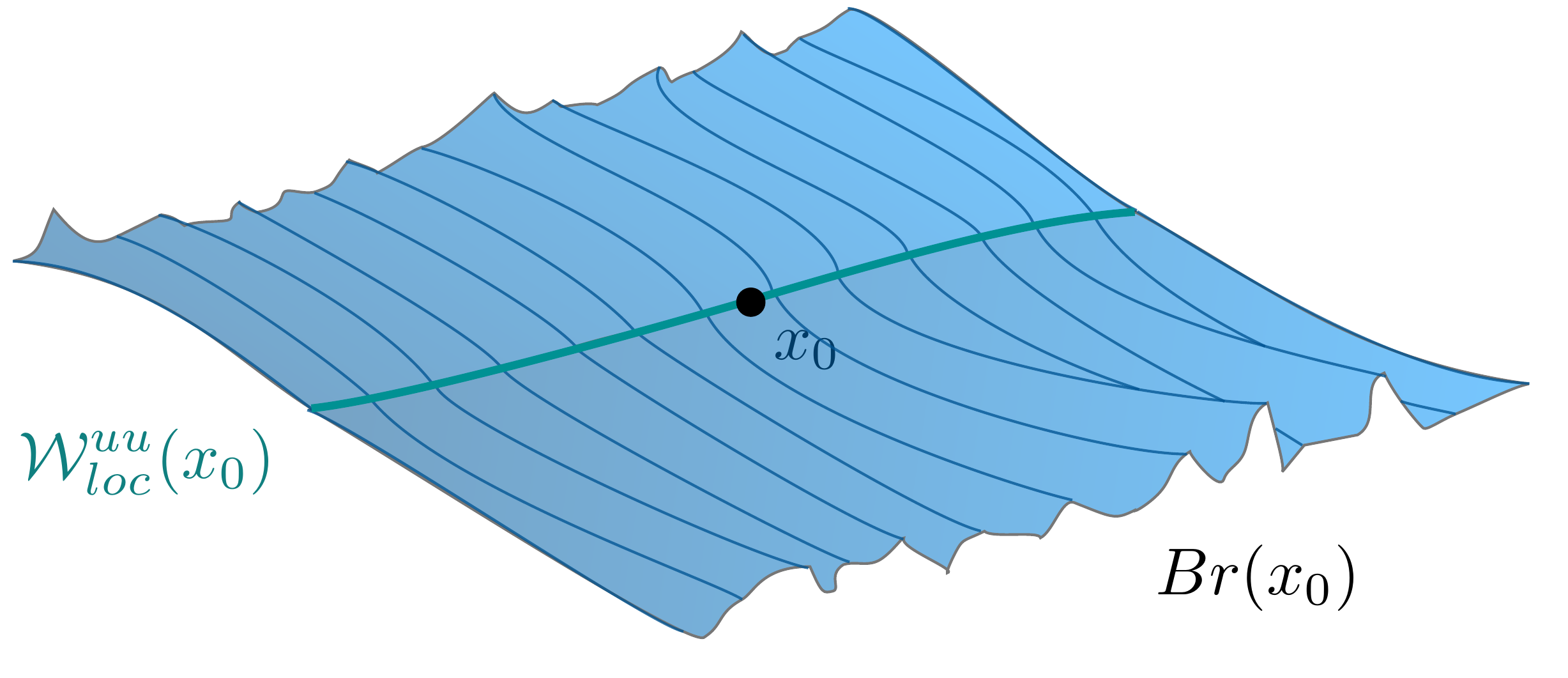}
\caption{A brush. }
\label{f=brush}
\end{center}
\end{figure}

\subsection{Brushes}\label{ss.brush}
Let us fix a small scale $\eps_0>0$.
For $x\in \Lambda$, we set
\[\W^{uu}_{loc}(x):=\W^{uu}(x, {\varepsilon_0}),\; \hbox{ and } \W^{s}_{loc}(x):=\W^{s}(x, {\varepsilon_0}).\]
The {\em brush} through $x$ is defined by
\[Br(x):= \bigcup_{y\in \cW^{uu}_{loc} (x)}\cW^{s}_{loc}(y).\]
It is a codimension-1 topological submanifold which in general is {\em  not} $C^1$.

For $\eps \ll \eps_0$ and $x\in \Lambda$, we consider the set
\[U_{\eps}(x):=B(x,\eps)\setminus Br(x).\]
 If $Br(x)$ {\em were} a $C^1$, codimension-$1$ submanifold (transverse to $E^c$), then it would divide each $U_\eps(x)$ into exactly two path-connected components.  We would like to prove something similar for 
the topological manifold $Br(x)$.
\footnote{In fact we can prove that the brush $Br(x)$ is a locally flat embedded disc (in the sense of~\cite{brown}), hence separates $M$ locally in two components.}

We first expand slightly the definition of path-component of $U_{\eps}(x)$ to allow for paths that leave the neighborhood.
Thus we say that  points $y,y'\in U_{\eps}(x)$
are  {\em in the same component} if they can be joined by a continuous path $\gamma_{y,y'}$ in $U_{\varepsilon_0/C_0}(x)$,  where $C_0>1$ is an explicit constant that only depends on the angle between the bundles
$E^{uu},E^c,E^s$.

\begin{figure}[h]
\begin{center}
\includegraphics[scale=.2]{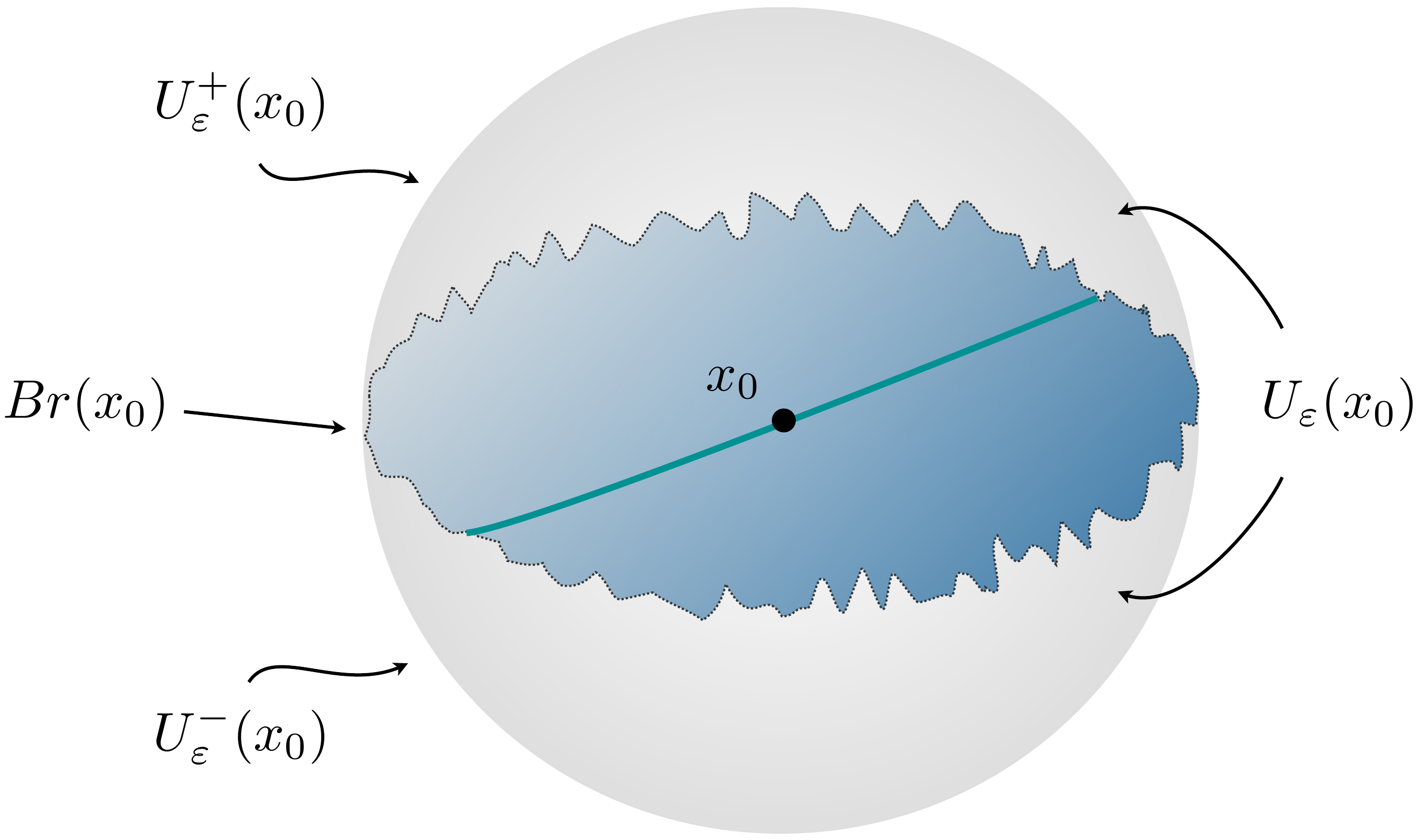}
\caption{The set $U_\eps(x_0)$. }
\label{f=Ueps}
\end{center}
\end{figure}

In Lemma~\ref{l=twocomponents}, we prove that for sufficiently small $\eps>0$,  the set $U_{\eps}(x)$ has exactly two components, $U^\pm_{\eps}(x)$, as in Figure~\ref{f=Ueps}; moreover they do not depend on the choice of $\varepsilon_0$. We say that
$x,y\in U^\pm_{\eps}(x)$ {\em  lie on the same side of $Br(x)$} if they are in the same component of $U_{\eps}(x)$. Since brushes extend canonically along any $\W^{uu}$-leaf, the notion of being on the same side of $Br(x)$ (and on different sides) can then be locally continued along any path  $\gamma$  lying in the $\cW^{uu}(x)$ leaf (see  Lemma~\ref{l.lift}).  Thus  for a path $\gamma$ in a leaf $\cW^{uu}(x)$ there is a well-defined notion of points in $y\in U_\eps(\gamma(0))$ and $y'\in U_\eps(\gamma(1))$  being on the same (or different) side(s) of $Br(\gamma(0))$.  We say that $y,y'$ are {\em on the same (or different) side(s) of $\cW^{uu}(x)$ relative to $\gamma$}.

It is this notion of being on opposite sides along a path that is key to  defining  s-transversality.

\subsection{S-transversality}\label{s=transversality} We can now state the formal definition:

\begin{definition}\label{d.transverse}  
The uu-lamination $\Lambda$ is  {\em s-transverse}
if for any $\tau>0$ small enough and for any $x\in \Lambda$, there are paths $\gamma,\gamma'\colon [0,1]\to \cW^{uu}(x)$
such that:
\begin{enumerate}
\item $d(\gamma(t),\gamma'(t))<\tau$ for each $t\in [0,1]$,
\item $\gamma'(0)$ and $\gamma'(1)$ belong to $U_\tau(\gamma(0))$
and $U_\tau(\gamma(1))$ respectively, but are on different sides of $\cW^{uu}(x)$
relative to $\gamma$. 
\end{enumerate}
It is \emph{locally s-transverse} if $\gamma,\gamma'$ can be chosen with diameter smaller than $\tau$.
\end{definition}

\begin{remark}\label{r.single tau}
In practice, it is enough to check the definition of s-transversa\-lity for a single sufficiently small value of $\tau$ (see Lemma~\ref{l=single tau}).
\end{remark}

In order to illustrate this definition,
we return to the case where  $f$ is a dynamically coherent,  partially hyperbolic diffeomorphism
and  the center bundle $E^c$ is orientable,  with orientation preserved by $Df$.
Note that for any $x,y$ with $d(x,y)<\tau$, the stable plaque $\cW^s_{loc}(y)$
intersects $\cW^{cu}_{loc}(x)$ at a unique point $z$ and that
$\cW^c_{loc}(z)$ meets $\cW^{uu}_{loc}(x)$.
It is easy to see that two points $y,y'$ are in the same component of $U_\tau(x)$
if the center arc joining $\cW^s_{loc}(y)\cap \cW^{cu}_{loc}(x)$
to $W^{uu}_{loc}(x)$ and the center arc joining $\cW^s_{loc}(y')\cap \cW^{cu}_{loc}(x)$
to $W^{uu}_{loc}(x)$ have the same orientation.

In this case, the uu-lamination $\Lambda$ is s-transverse if for $\tau>0$ small
and for any $x\in \Lambda$ there exists a ``hexagonal loop"
consisting of a concatenation of $6$ nontrivial $C^1$ arcs $\gamma_1,\cdots,\gamma_6$ with the following properties (see Figure~\ref{f=hexagon}):
\begin{enumerate}
\item The images of $\gamma_1,\gamma_4$ lie in $\W^{uu}(x)$. For all $t$,
$d(\gamma_1(t),\gamma_4(1-t))<\tau$.
\item The images of $\gamma_2,\gamma_6$ lie in $\W^s_{loc}(\gamma_1(1)),\W^s_{loc}(\gamma_1(0))$ respectively.
\item  $\gamma_3,\gamma_5$ have their images contained in $\W^c_{loc}(\gamma_4(0)),\W^c_{loc}(\gamma_4(1))$ respectively and have opposite orientations.
\end{enumerate}
\begin{figure}[h]
\begin{center}
\includegraphics[scale=.2]{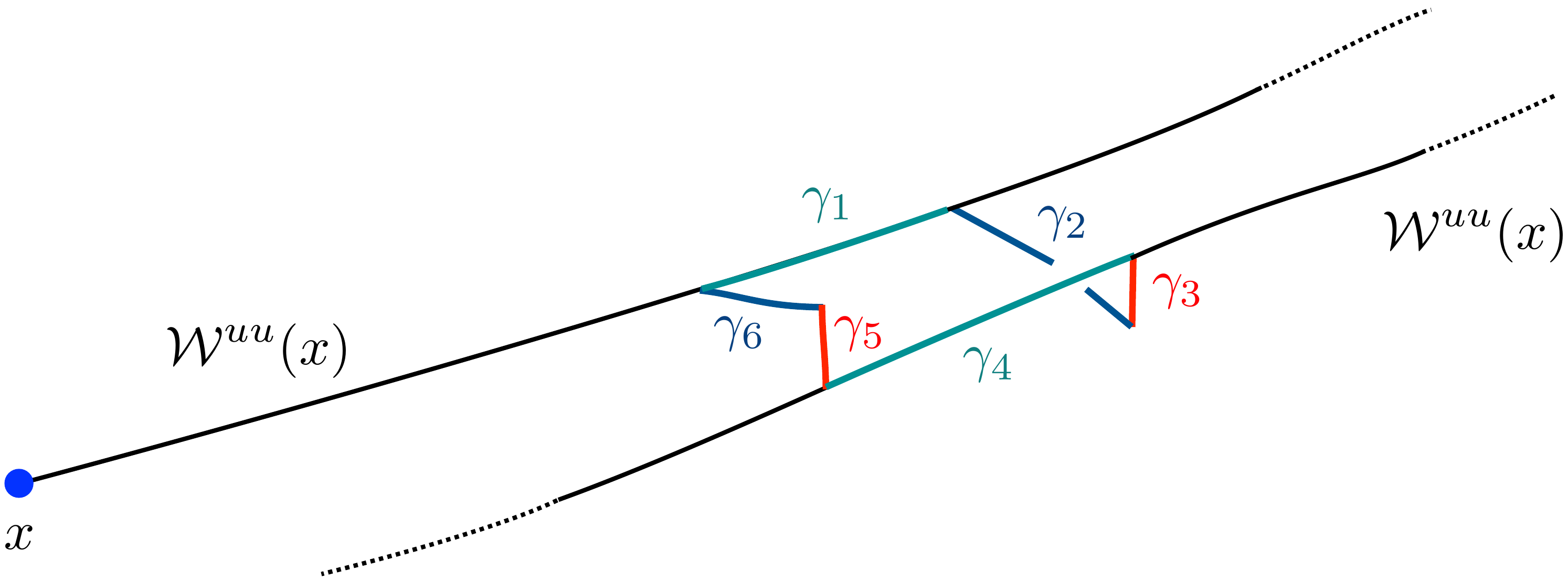}
\caption{Hexagonal loop attached to $\cW^{uu}(x)$.}
\label{f=hexagon}
\end{center}
\end{figure}
This notion of a hexagon gives rise to a simple criterion for checking  s-transversality in the global partially hyperbolic case.

We now  state some properties of s-transversality that will be proved in Section~\ref{ss=cleanup}.
The s-transversality is  $C^1$ robust:

\begin{proposition}\label{p.s-transverse0}
If $\Lambda$ is an invariant s-transverse uu-lamination for a diffeomorphism $f$,
then there are neighborhoods $U$ of $\Lambda$ in $M$ and $\cU$ of $f$ in the space of $C^1$ diffeomorphisms
such that for any $g\in\cU$, any invariant uu-lamination $\Lambda_g\subset U$ is also s-transverse.
\end{proposition}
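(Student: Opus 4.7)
The plan is to combine Remark~\ref{r.single tau} with the essentially topological nature of the crossing in Definition~\ref{d.transverse}. By Remark~\ref{r.single tau} it suffices to produce, for some single small $\tau>0$, witnessing paths for $\Lambda_g$ at scale $\tau$. Condition~(1) of Definition~\ref{d.transverse} is manifestly $C^0$-open in the pair of paths; condition~(2) depends only on which component of $U_{\tau}(\gamma(0))$ (resp.\ $U_\tau(\gamma(1))$) contains $\gamma'(0)$ (resp.\ $\gamma'(1)$), and is therefore invariant under any joint $C^0$-perturbation of the paths and of the brushes, provided the endpoints remain at a definite positive distance from the brushes. It is this open/topological character that will make the property pass from $f$ to $g$.

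First I would set up a uniform witness by a compactness argument on $\Lambda$. Fix $\tau$ and, for each $x\in \Lambda$, choose a witnessing pair $\gamma_x,\gamma'_x$ at scale $\tau/3$, so that the endpoints of $\gamma'_x$ sit at a definite positive distance, say $\tau/10$, from the brushes $Br(\gamma_x(0))$ and $Br(\gamma_x(1))$. Since the leaves of $\cW^{uu}$ depend continuously in $C^1$ on the basepoint, the brushes depend continuously in $C^0$, and $f$-stable holonomy is well defined between nearby $\cW^{uu}$-leaves, sliding $(\gamma_x,\gamma'_x)$ by stable holonomy to a leaf $\cW^{uu}(y)$ with $y\in\Lambda$ near $x$ produces a pair still satisfying (1) and (2) at scale $\tau$ (holonomy is a homeomorphism that respects sides of brushes). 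By compactness of $\Lambda$ I extract a finite cover $\Lambda\subset V_1\cup\cdots\cup V_N$ and pairs $(\gamma_i,\gamma'_i)\subset \cW^{uu}(x_i)$ such that a valid witness based in $\cW^{uu}(y)$ is obtained by stable-holonomy transport of $(\gamma_i,\gamma'_i)$ whenever $y\in V_i\cap \Lambda$.

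Next, for $g$ in a small $C^1$-neighborhood $\cU$ of $f$ and for any invariant uu-lamination $\Lambda_g$ contained in a small neighborhood $U$ of $\Lambda$, the dominated splitting persists on $U$, and the local stable and strong unstable manifolds of points in $\Lambda_g$ vary continuously in $C^1$ jointly with the basepoint and the diffeomorphism; in particular the brushes $Br_g(y)$ for $y\in \Lambda_g$ are $C^0$-close to the brushes $Br(x)$ for nearby $x\in\Lambda$. Given $y\in\Lambda_g$, pick $x\in\Lambda$ close to $y$ and $i$ with $x\in V_i$, transport $(\gamma_i,\gamma'_i)$ to $\cW^{uu}(x)$ by $f$-stable holonomy, and then project to $\cW^{uu}_g(y)$ along the stable direction, using a fake-foliation structure on $U$ in the spirit of Burns--Wilkinson. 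This yields a pair $\tilde\gamma,\tilde\gamma'$ in $\cW^{uu}_g(y)$ satisfying condition~(1) at scale $\tau$ with room to spare.

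The main obstacle is verifying condition~(2) for $\tilde\gamma,\tilde\gamma'$ with respect to the $g$-brushes. I would argue that the endpoints of $\tilde\gamma'$ are close to the corresponding endpoints of $\gamma'_i$ after the sequence of transports, and those endpoints sit at uniform positive distance from the $f$-brushes; since $Br_g(\cdot)$ is $C^0$-close to $Br(\cdot)$ on compact sets, the endpoints of $\tilde\gamma'$ also remain at positive distance from the $g$-brushes, and the $g$-components of $U^g_\tau(\tilde\gamma(0))$ and $U^g_\tau(\tilde\gamma(1))$ containing them are canonically identified with the $f$-components containing the corresponding endpoints of $\gamma'_i$, so the ``opposite sides'' property persists. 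The delicate point is that brushes are only topological codimension-one submanifolds and not $C^1$, so one cannot invoke a transversality argument directly; the argument must rest on the local separation property established in Lemma~\ref{l=twocomponents} together with uniform $C^0$-proximity of brushes under $C^1$ perturbation, and on the careful bookkeeping of endpoint-to-brush distances built into the choice of witnesses.
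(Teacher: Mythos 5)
Your argument follows essentially the same route as the paper's proof of Proposition~\ref{p.s-transverse}. Both proofs rest on three pillars: (i) replace the open condition ``$\gamma'(0),\gamma'(1)\notin Br(\cdot)$, on opposite sides'' by a quantitative gap --- you use a definite endpoint-to-brush distance, the paper uses the center gap $\rho^c(\gamma(i),\gamma'(i))>\chi_x$, which by Lemma~\ref{l.plaque} and Lemma~\ref{l=alternate 5.4} carries the same information; (ii) use compactness of $\Lambda$ to make $R$ and the gap uniform; (iii) invoke the $C^1$-continuity of the plaque families $\cP^{cs}$, $\cP^c$, $\hW^s$ (Proposition~\ref{p.plaque}, Item~(4)), and of the strong unstable leaves, jointly in the basepoint and the diffeomorphism, to see that the quantitative crossing persists for $g$ close to $f$ and for any invariant $\Lambda_g\subset U$. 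The invocation of Remark~\ref{r.single tau} at the outset is consistent with what the paper implicitly uses.

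Two small remarks on where your write-up is more explicit than, but not genuinely different from, the paper. First, your ``transport by stable holonomy'' is, in the absence of dynamical coherence, really transport along the $\hW^s$ fake foliation inside $\cP^{cs}$-plaques; you flag this correctly, and this is exactly what the paper's phrase ``continuity of the plaque families'' encodes. Second, the point you call ``the main obstacle'' --- that condition~(2) in Definition~\ref{d.transverse} is a global statement along $\gamma$ and brushes are only topological --- is handled by exactly the combination you cite: Lemma~\ref{l=twocomponents} gives the local two-component structure, Lemma~\ref{l.lift} propagates the side along the path and shows it is independent of the lift, and $C^0$-proximity of the brushes (uniform on compacta, since they are built from $\cW^{uu}_{loc}$, $\cW^s_{loc}$) keeps the endpoints and the path in the correct components. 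The paper leaves all of this implicit in one sentence; your expansion fills in the same gaps the reader must fill, and is correct. One quantitative bookkeeping point that the paper states but you leave implicit: the uniform $R$ with arcs in $\cW^{uu}_R(x)$ and the uniform gap $\chi$ are precisely what the later arguments (Section~\ref{ss=cucriterion}, especially~\eqref{e.eps0}) consume, which is why the paper records them in Proposition~\ref{p.s-transverse}; your finite-cover construction does produce them, but it would be worth stating the conclusion in that quantitative form.
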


The next criterion for s-transversality shows in particular that, if $\W^{uu}$ is minimal and s-transverse, then it is locally s-transverse.

\begin{proposition}\label{p=stransversecriterion}  Let $f$ be a partially hyperbolic diffeomorphism with $1$-dimensional center, and assume that the foliation $\W^{uu}$ is
dynamically  minimal.  If  $E^{uu}\oplus E^s$ is not integrable, then  $\W^{uu}$ is locally s-transverse.
\end{proposition}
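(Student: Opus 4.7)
The plan is to produce, at every $x \in M$, a small hexagonal loop of the type in Figure~\ref{f=hexagon} attached to $\W^{uu}(x)$, with both center arcs $\gamma_3, \gamma_5$ nontrivial and of opposite orientations. Such a hexagon certifies local s-transversality at $x$: the two $\W^{uu}$-arcs $\gamma_1$ and $\gamma_4$ lie close to one another, but the opposite center displacements at their two ends place the endpoints of $\gamma_4^{-1}$ on opposite sides of the brush $Br(\gamma_1(0))$. Moreover, opposite orientation of $\gamma_3$ and $\gamma_5$ is automatic once both are nontrivial, since the closed-loop condition and the fact that $\W^{uu}$- and $\W^s$-arcs carry no $E^c$-component force the signed center displacements of $\gamma_3$ and $\gamma_5$ to cancel.

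First, I would use non-integrability of $E^{uu}\oplus E^s$ to produce the first three sides $\gamma_1, \gamma_2, \gamma_3$ of the hexagon at a suitable base point $p$. Working with fake $\W^{cs}$- and $\W^{cu}$-plaques à la Burns--Wilkinson (or directly with $\W^{cs}, \W^{cu}$ when dynamical coherence is available), I consider the $\W^s$-holonomy $z\mapsto \W^s_{loc}(z)\cap \W^{cu}_{loc}(q)$ along a short $\W^{uu}$-arc $\gamma_1$ from $p$ to $q$. Integrability of $E^{uu}\oplus E^s$ is equivalent to this holonomy preserving the local $\W^{uu}$-plaque structure; non-integrability produces at some $p$ a point $z\in \W^{uu}_{loc}(p)$ whose image has a nontrivial $\W^c$-offset from $\W^{uu}_{loc}(q)$, yielding $\gamma_1, \gamma_2, \gamma_3$ ending at a point $v_4\in \W^{uu}_{loc}(p)$ distinct from $q$. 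To close the loop with a nontrivial $\gamma_5$, I need a point $v_5 \in \W^{uu}(p)$ lying in $\W^{cs}_{loc}(p)$; since $\W^{uu}_{loc}(p)\cap \W^{cs}_{loc}(p) = \{p\}$ locally, $v_5$ must come from a \emph{different} local $\W^{uu}$-pass through a neighborhood of $p$. This is exactly where dynamical minimality of $\W^{uu}$ is invoked: density of $\W^{uu}(p)$ forces the leaf to recur arbitrarily close to $p$ on many distinct local plaques, furnishing candidate points $v_5 \in \W^{cs}_{loc}(p)$. Translation from $p$ to an arbitrary $x\in M$ is handled by the same density property: one simply chooses $p\in \W^{uu}(x)$ from the outset, so the hexagon is automatically attached to $\W^{uu}(x)$, and small diameter is obtained by iterating $f^{-n}$, which shrinks $\W^{uu}$-arcs exponentially while preserving the hexagonal structure.

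The main obstacle is the closing step: the $\W^{uu}(p)$-returns furnished by minimality are a priori insensitive to the specific $\W^{cs}$-position of the center defect produced by non-integrability, so one must argue that among the dense family of such returns there is one in the precise $\W^{cs}_{loc}(p)$-position required to serve as $v_5$. This will require a continuity-type argument combining continuity of the (fake) $\W^{cs}_{loc}$-plaques in the base point, recurrence of $\W^{uu}$ at arbitrarily small scales, and the ability to continuously vary the initial data $(p, \gamma_1, z)$ so as to slide the $\W^{cs}$-position of the defect until it aligns with an available recurrent return.
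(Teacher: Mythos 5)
Your overall strategy is genuinely different from the paper's, and the obstruction you flag at the end is a real one that your approach does not overcome; the paper sidesteps it entirely.

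The paper does not attempt to close a hexagonal loop. Instead, it works directly with the brush $Br(x)$ and the separation disc $D$ of Lemma~\ref{l=twocomponents}. Non-integrability is first translated, via Lemma~\ref{l.non-joint-integrability}, into the existence (for every small $\eta>0$) of points $x$ and $y\in Br(x)\cap B(x,\eta)$ such that $\W^{uu}(y)$ escapes $Br(x)$ inside $B(x,\eta)$; this furnishes a short unstable arc $\gamma_0'$ from $y$ to a point $z\in U_\eps(x)$, say on the $+$ side. The second step --- which is exactly the missing ingredient in your plan --- is an openness argument rather than a recurrence argument: the projection onto $D$ along local $\W^s$-leaves is an open map near $y$, and since $y$ itself projects onto $\W^{uu}_{loc}(x)\subset D$, there are points $y'$ arbitrarily close to $y$ projecting to \emph{either} component of $D\setminus\W^{uu}_{loc}(x)$. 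Choosing $y'$ on the $-$ side, the paper takes $\gamma$ to be the constant path at $x$ and $\gamma'$ a short arc in $\W^{uu}_{loc}(y')$ running from $y'$ to a point near $z$; the endpoints then lie on opposite sides of $Br(x)$ by construction, and both arcs have diameter $O(\eta)\ll\tau$. There is therefore no need to hunt, among recurrent returns of a single leaf, for one landing in a prescribed $\W^{cs}$-position; the opposite-side point is produced by perturbing $y$ slightly (not necessarily within $\W^{uu}(y)$), and the relevant arc $\gamma'$ lives on the nearby leaf $\W^{uu}(y')$.

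Two further remarks. First, the hexagon criterion of \S\ref{s=transversality} is stated under dynamical coherence and an orientability hypothesis; the paper's brush-based proof avoids these, which matters since the proposition is stated for general partially hyperbolic $f$. Second, your proposal to obtain small diameter by iterating $f^{-n}$ does not work: backward iteration contracts the $\W^{uu}$-arcs but \emph{expands} the $\W^s$-legs, destroying the condition $d(\gamma(t),\gamma'(t))<\tau$. The paper instead exploits that Lemma~\ref{l.non-joint-integrability} gives a crossing configuration at \emph{every} scale $\eta$, so one simply takes $\eta\ll\tau$ from the outset; no iteration is needed to shrink the picture. (The forward iteration in Lemma~\ref{l=single tau} serves a different purpose, namely propagating s-transversality to smaller $\tau'$.)
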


A weaker notion of ``non joint integrability" has been used in~\cite{CPS} in order to prove the finiteness of attractors
for globally partially hyperbolic diffeomorphisms. The same idea gives the following structure on dynamically minimal
uu-laminations.

\begin{proposition}\label{p=structure-minimal}
If $\Omega$ is a non-invariant minimal s-transverse uu-lami\-na\-tion for a diffeomorphism $f$,
then it is fixed by some iterate $f^N$, $N\geq 1$.

If $\Lambda$ is an invariant dynamically minimal s-transverse uu-lami\-na\-tion, then
it decomposes as $\Lambda= \Omega\sqcup f(\Omega) \sqcup\cdots \sqcup f^{N-1}(\Omega)$,
where $\Omega$ is a minimal uu-lamination. The sets $f^i(\Omega)$ are the connected components of $\Lambda$.
\end{proposition}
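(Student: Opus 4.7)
The plan is to deduce both statements from a single \emph{local isolation} property for s-transverse minimal uu-laminations. Begin with the standard dichotomy: two minimal uu-laminations are either equal or disjoint, because a non-empty intersection is closed and uu-saturated, hence equals each by minimality. Applied to the orbit $\{f^n(\Omega)\}_{n\in\ZZ}$, whose members are all minimal uu-laminations, this shows the iterates are pairwise equal or disjoint, so Part~1 reduces to showing that the orbit is finite.

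The core claim will be the following \emph{Local Isolation Claim}: there exists $\delta_0>0$, depending only on $\Omega$, such that every minimal uu-lamination $\Omega'$ with Hausdorff distance $d_H(\Omega,\Omega')<\delta_0$ satisfies $\Omega=\Omega'$. To prove it, assume $\Omega\cap\Omega'=\emptyset$ and fix $\tau>0$ small enough that s-transversality holds at scale $\tau$ and the brush $Br(x)$ locally separates $U_\tau(x)$ into two components (Lemma~\ref{l=twocomponents}). Pick $x\in\Omega$ with crossing paths $\gamma,\gamma'\colon[0,1]\to\W^{uu}(x)$ as in Definition~\ref{d.transverse}. Using Hausdorff closeness and continuous dependence of local uu-plaques on the base point, construct a leaf $L'\subset\Omega'$ together with shadow paths $\tilde\gamma,\tilde\gamma'\colon[0,1]\to L'$ tracking $\gamma,\gamma'$ within $\eta\ll\tau$. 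The $\eta$-closeness places $\tilde\gamma'(i)$ on the same side of $Br(\gamma(i))$ as $\gamma'(i)$ for $i=0,1$, so the endpoints of $\tilde\gamma'$ lie on opposite sides of $\W^{uu}(x)$ relative to $\gamma$. Continuous propagation of the side along uu-paths (Lemma~\ref{l.lift}) then forces $\tilde\gamma'$ to cross some brush $Br(\gamma(t))$, producing a point $z\in L'\cap\W^{s}_{loc}(y_t)$ for some $y_t\in\Omega$. The contradiction is extracted by iterating $f^n$: the orbits $f^n(z)\in f^n(\Omega')$ and $f^n(y_t)\in f^n(\Omega)$ remain on a shrinking common stable manifold, and a compactness/limiting argument, combined with the orbit dichotomy, forces a coincidence among iterates of $\Omega$ and $\Omega'$, and thus $\Omega=\Omega'$.

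Granted the claim, Part~1 follows from compactness of the space of compact subsets of $M$ in the Hausdorff metric: an infinite sequence of pairwise disjoint iterates would contain two terms within $\delta_0$, which must then coincide. Hence the orbit of $\Omega$ is finite and $f^N(\Omega)=\Omega$ for some $N\geq 1$. For Part~2, given $\Lambda$ invariant, dynamically minimal and s-transverse, a standard compactness argument produces a minimal uu-lamination $\Omega\subset\Lambda$; s-transversality is inherited because it is a local property along uu-leaves and every leaf of $\Omega$ is a leaf of $\Lambda$. Part~1 yields $N\geq 1$ with $f^N(\Omega)=\Omega$, and $\Omega\cup f(\Omega)\cup\cdots\cup f^{N-1}(\Omega)$ is an $f$-invariant uu-saturated subset of $\Lambda$, so equals $\Lambda$ by dynamical minimality. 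The sets $f^i(\Omega)$ are pairwise disjoint by the dichotomy, and each is connected: a splitting of a minimal uu-lamination into two non-empty closed pieces would produce two proper closed uu-saturated subsets (since uu-leaves are connected), contradicting minimality.

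The principal obstacle is the Local Isolation Claim, and specifically the step that extracts a genuine intersection from the brush-crossing point $z$. The brush is only a \emph{topological} codimension-one object and the notion of side is canonically defined only at points of $\Omega$, so comparisons along a nearby leaf of $\Omega'$ must proceed via the local graph structure of $\W^{uu}$ together with Lemma~\ref{l.lift}. The subtlety emphasized in Section~\ref{ss.brushes} — that $\gamma,\gamma'$ may be long and coincide along common stable plaques — rules out any smooth-transversality shortcut, and the final step of turning a stable-manifold crossing into an honest intersection of $\Omega$ and $\Omega'$ will likely require ideas along the lines of the non-joint-integrability techniques of~\cite{CPS}.
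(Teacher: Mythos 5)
Your plan for Part~2, and the general mechanism you identify for Part~1 — namely, that s\nobreakdash-transversality plus Hausdorff closeness of two uu-laminations forces a shared local stable leaf via the brush-crossing argument — match what the paper does. The serious gap is in the final ``contradiction'' step of your Local Isolation Claim. You produce $z\in\Omega'\cap\cW^s_{loc}(y_t)$ with $y_t\in\Omega$, iterate forward, and assert that shrinking stable distance ``combined with the orbit dichotomy forces a coincidence among iterates of $\Omega$ and $\Omega'$, and thus $\Omega=\Omega'$.'' This does not follow. Forward iteration sends $z$ and $y_t$ into $f^n(\Omega')$ and $f^n(\Omega)$, which (precisely because neither set is assumed periodic) are sets that wander; the subsequential Hausdorff limits of $f^{n_k}(\Omega')$ and $f^{n_k}(\Omega)$ are new compact sets bearing no obvious relation to $\Omega'$ and $\Omega$. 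Getting a limit point in both of those limits says nothing about $\Omega\cap\Omega'$. Even in the only case you actually need — $\Omega'=f^k(\Omega)$ — iterating by $f^n$ produces pairs $(f^{n+k}(\Omega),f^n(\Omega))$, which again drift, and the dichotomy gives at best $f^{n+k}(\Omega)=f^n(\Omega)$ for some \emph{limiting} $n$, which you do not obtain from a Hausdorff-limit of sets that are not converging to a fixed pair.

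The paper avoids this by \emph{not} comparing $\Omega$ to an arbitrary nearby lamination. Fix $\eta>0$ so that two uu-sublaminations within distance $\eta$ share a local stable leaf (your brush-crossing step gives exactly this). Choose $N_0$ so that among any $N_0$ consecutive backward iterates $f^{-\ell}(\Omega),\dots,f^{-\ell-N_0+1}(\Omega)$, two indices $\ell\le i<j<\ell+N_0$ give iterates within $\eta$ of each other (pigeonhole on a finite $\eta/2$-net of $M$, applied to the backward orbit of a single point of $\Omega$). Then $f^{-i}(\Omega)$ and $f^{-j}(\Omega)$ share a local stable leaf. Now the crucial move: iterate forward \emph{by $f^j$}, not by an arbitrary $f^n$. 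This brings one iterate exactly back to $\Omega$ and the other to $f^{j-i}(\Omega)$, with two points at distance $O(\kappa_0^{\ell})$. Since $1\le j-i\le N_0$ is bounded, some value $N=j-i$ recurs for a sequence $\ell_k\to\infty$, giving arbitrarily close pairs of points in the \emph{same fixed pair} of compact sets $\Omega$ and $f^N(\Omega)$. Compactness then yields $\Omega\cap f^N(\Omega)\neq\emptyset$, hence $\Omega=f^N(\Omega)$ by minimality. The decisive ingredients you are missing are (a) restricting to backward iterates of $\Omega$ inside a bounded window, and (b) iterating forward by exactly the amount that returns one of the two sets to $\Omega$, so the compactness argument closes on a fixed pair. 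As stated, your Local Isolation Claim is essentially equivalent to the proposition itself, and your sketched proof of it is circular.
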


\subsection{The SH  (``some hyperbolicity")  property} Another concept we will use in the statements of our main results is that of {\em some hyperbolicity} in  the center bundle $E^c$ of a partially hyperbolic $uu$-lamination  $\Lambda$, which was introduced by Pujals and Sambarino~\cite{PujSam}.  This gives a natural weakening of the hypothesis that $\Lambda$ is  hyperbolic with  a dominated splitting of the unstable bundle $E^{u} = E^{uu}\oplus E^c$.   It has a topological flavor, and is different from the measure-theoretic ``mostly expanding" property defined in~\cite{ABV}.

\begin{definition}\label{d=SH} Let $\Lambda$  be a partially hyperbolic  uu-lamination. 
We say that  $\Lambda$ has the {\em SH  property}  if there exist $R> 0$, $C>0$ and $\lambda_{SH}>1$ such that for every $x\in \Lambda$, there exists $y\in \W^{uu}(x,R)$  such that, for all $n,\ell \geq 1$,
\[\|Df^{-n}\vert_{E^c(f^{\ell+n} x)}\| \leq C\lambda_{SH}^n.
\]
\end{definition}
Note that 
the constant $R$ can be chosen as small as one likes in the definition, at the expense of decreasing $C$.

The SH property is $C^1$ robust (i.e., every  $g$ sufficiently $C^1$ close to $f$ has the SH property), see Proposition~\ref{p=SH laminations open} below.
For (globally) partially hyperbolic diffeomorphisms with the SH property,
if the foliation $\cW^{s}$ is minimal, then  it is $C^1$ robustly minimal (\cite{PujSam} and Proposition~\ref{p=ss minimal} below).
Note that the property SH is not symmetric, hence it does not give the minimality of $\cW^{uu}$ for an Anosov diffeomorphism without further assumptions.

\begin{examples} Besides partially hyperbolic Anosov diffeomorphisms, examples of SH uu-laminations can be constructed by deforming such systems: this is the case of Shub's and Ma\~n\'e's examples, see~\cite{PujSam}.
If one considers an hyperbolic attractor with a one-dimensional expanding center bundle, one can perform similar deformations
and obtain a partially hyperbolic uu-lamination which satisfies the SH property and is proper.
\end{examples}

\subsection{A robust criterion for minimality}
Our main results in Part 1 of this paper concern the property of a uu-lamination containing a {\em $cu$-disk}.  By a $cu$-disk in a uu-lamination $\Lambda$, we mean a $C^1$ embedded disk in $\Lambda$ that is everywhere tangent to the distribution $E^{uu}\oplus E^c$.  For example, in the case that $\Lambda$ is hyperbolic with $1$-dimensional expanding center, a $cu$-disk is merely a disk in $\Lambda$ tangent to the (full) unstable lamination for $f$.
The existence of a $cu$ disk has strong implications for the dynamics of $\Lambda$, including the existence of a weak attractor when $f$ has the SH property, or the minimality of $\cW^{uu}$ (under additional assumptions).  Our central result in this part of this paper is:

\begin{maintheorem}\label{t=s-transPartHyp}
Let  $f\colon M\to M$ be a $C^{1+}$ diffeomorphism and $\Lambda$ an s-transver\-se partially hyperbolic uu-lamination with  $1$-dimensional center and satisfying the SH property.   Then  $ \Lambda$ contains a $cu$-disk.

More precisely, there is a hyperbolic set $K_0$
with unstable bundle  $E^{uu}\oplus E^{c}$ such that $\Lambda$ contains the unstable manifolds of $K_0$.
\end{maintheorem}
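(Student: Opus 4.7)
The plan is to produce the hyperbolic set $K_0$ first, using only the SH property, and then to use s-transversality to show that its unstable manifolds lie inside $\Lambda$; the existence of a $cu$-disk in $\Lambda$ will be a by-product.

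For the first ingredient, I would follow the Pujals--Sambarino scheme from~\cite{PujSam}. The SH hypothesis supplies, at each $x\in\Lambda$, a point $y\in \cW^{uu}(x,R)$ whose forward orbit enjoys uniform backward expansion in $E^c$. Pliss's lemma then extracts infinitely many hyperbolic times along the orbit of $y$, and a Ma\~n\'e/Liao type closing argument promotes these into periodic orbits in $\Lambda$ that are uniformly hyperbolic with expanding bundle $E^{uu}\oplus E^c$. Organizing a horseshoe around one such periodic orbit produces a basic set $K_0\subset\Lambda$ with the desired splitting. Because $\Lambda$ is $\cW^{uu}$-saturated, we immediately get $\cW^{uu}(K_0)\subset \Lambda$; only the $E^c$-direction remains to be filled.

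For the second ingredient, I would fix $p\in K_0$ and, for each large $n$, apply s-transversality at $f^{-n}(p)\in \Lambda$ to obtain a pair of $\cW^{uu}$-paths $\gamma_n,\gamma_n'$ whose endpoints straddle the brush $Br(f^{-n}(p))$ at some small fixed scale $\tau$. Pushing forward by $f^n$, the images lie in $\cW^{uu}(p)\subset \Lambda$; their relative $E^s$-distance contracts exponentially, while the topological ``opposite sides'' information is preserved by the invariance of local stable and strong unstable plaques under $f$ (in the spirit of Proposition~\ref{p.s-transverse0}). Crucially, the center expansion of $K_0$ at $p$ keeps the $E^c$-offset of the iterated pair from collapsing, so the pushed-forward curves give $\cW^{uu}$-plaques in $\Lambda$ that approach $\cW^{uu}(p)$ from opposite sides in the $E^c$-direction, while their endpoints accumulate on $p$ in the stable direction. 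Passing to Hausdorff limits, using the $C^{1+}$ bounded geometry of $\cW^{uu}$-leaves and the closedness of $\Lambda$, I would obtain two one-sided accumulations of $\cW^{uu}$-plaques in $\Lambda$ onto a $cu$-disk $D_0\ni p$, tangent to $E^{uu}\oplus E^c$ and agreeing locally with $W^u_{loc}(p)$. A final $\lambda$-lemma argument, combined with the $\cW^{uu}$-saturation of $\Lambda$ and the local product structure of $K_0$, would propagate $D_0$ to cover $W^u(q)$ for every $q\in K_0$.

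The main obstacle I anticipate lies in the middle step. S-transversality is a purely topological statement about crossings of brushes, which are only topological codimension-$1$ submanifolds, so the $E^c$-drift carried by the iterated pair cannot be read off as a tangent vector. The technical work will consist in showing that the topological separation of the pair persists quantitatively under $f^n$ on scales compatible with the hyperbolic times supplied by SH, so that the accumulating $\cW^{uu}$-plaques do not degenerate onto a single leaf. Coordinating the topological invariance of brushes (as developed in Section~\ref{ss.brush}) with the smooth center expansion along $K_0$ is where the real work will lie, and is presumably the ``topological drift'' argument advertised in the introduction.
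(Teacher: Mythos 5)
Your first step (using Pliss times and a closing lemma to manufacture a hyperbolic basic set $K_0\subset\Lambda$ with unstable bundle $E^{uu}\oplus E^c$) is compatible with the paper, which instead defines $K$ directly by the SH-condition \eqref{e=SHsectiondef} and takes $K_0=\bigcap_{n\geq 0}f^n(K)$; this part is not where the difficulty lies.

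The gap is in the middle step, and it is not the one you anticipate. Even if you could show that the pushed-forward pairs $f^n(\gamma_n),f^n(\gamma_n')$ accumulate on $\cW^{uu}(p)$ from both sides with center offset bounded away from zero, this would \emph{not} produce a $cu$-disk inside $\Lambda$. It would only show that $\cW^{uu}(p)$ is a two-sided accumulation point of the lamination $\Lambda$, transversally. The set of ``center heights'' of the accumulating $\cW^{uu}$-plaques could a priori be a Cantor set or even a two-point set; to get a $cu$-disk you need that set to contain an entire interval. The real work, which the paper calls the ``topological drift'' argument, is exactly this density-filling mechanism: Proposition~\ref{p=evenspaced} constructs, by induction on $N$, a collection of $N+1$ points of $\Lambda$ lying in a single center plaque $\cP^c(x_N)$ with $x_N\in K_0$, $(1+\delta)$-nearly evenly spaced, with diameter pinned to $[\eps/\|Df\|,\eps]$. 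Each induction step uses s-transversality once: one slides the existing $N$ points along $\cW^{uu}$ by the crossing family $\gamma,\gamma'$, and an intermediate value argument on the continuous function $\phi(t)=\bar d^c(\widehat x_{N-1}(t),\gamma(t))/\bar d^c(\widehat x_0(t),\widehat x_1(t))$ selects a parameter $\hat t$ where $\gamma(\hat t)$ fits as the $(N+1)$-st point with the correct spacing. Sending $N\to\infty$ (then extracting a limit) fills an interval in $\cP^c$, which, saturated by $\cW^{uu}$-plaques through a point of $K_0$, gives the $cu$-disk. Nothing in your proposal introduces new points into a center plaque; it only tracks the original s-transversality pair, and cannot bootstrap to a dense set.

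There is a secondary problem in your claim that ``the center expansion of $K_0$ at $p$ keeps the $E^c$-offset of the iterated pair from collapsing, while their endpoints accumulate on $p$.'' The relevant center stretching is governed by $Df^n|_{E^c}$ along the orbit of $\gamma_n(0)$, a point that lies at $\cW^{uu}$-distance $R$ from $f^{-n}(p)$ and whose orbit need not enjoy SH-expansion; in the non-hyperbolic SH setting the center offset can actually collapse. Moreover, the pushed-forward endpoints $f^n(\gamma_n'(i))$ are carried exponentially far from $p$ along the $\cW^{uu}$-leaf, so they do not accumulate near $p$ at all. The paper handles both issues quantitatively via Proposition~\ref{p.waitingdisk}: one waits long enough for the stable offsets to shrink below $\widetilde\eta$ times the center length, uses the $\W^{uu}$-section $K$ to land on a point of $K_0$ whose forward orbit expands $E^c$, and then rescales by choosing the forward iterate $n$ that brings the center diameter back into $[\eps/\|Df\|,\eps]$. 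This rescaling, combined with the distortion estimates (Lemma~\ref{l.basicdistort}, Corollary~\ref{c.distortion}) and the H\"older control on the plaques, is what makes the inductive count increase without losing the nearly-even spacing, and is absent from your outline.
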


\begin{corollary}\label{c=s-transPartHyp}
In the setting of Theorem~\ref{t=s-transPartHyp},
there exists an invariant uu-lamination $\Lambda'\subset \Lambda$ which is a transitive {\em weak attractor}:
$\omega(x)\subset \Lambda'$ for all $x$ in a nonempty open subset of $M$
and $\omega(x)=\Lambda'$ for some $x\in \Lambda$.

If $f$ is transitive and $M$ connected, then $\Lambda=M$ and $\cW^{uu}$ is minimal.
\end{corollary}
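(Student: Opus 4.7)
The plan is to set $\Lambda':=\overline{W^u(K_0)}$, where $K_0$ is the hyperbolic set provided by Theorem~\ref{t=s-transPartHyp}. I would first replace $K_0$ by a transitive \emph{basic} (locally maximal and transitive) hyperbolic sub-piece, obtained by applying Smale's spectral decomposition to the locally maximal invariant set in a small neighborhood of $K_0$ (this enlargement remains hyperbolic and, since $\Lambda$ is closed and uu-saturated, its unstable manifolds still lie in $\Lambda$). Then $\Lambda'$ is $f$-invariant because $f(W^u(K_0))=W^u(K_0)$, and uu-saturated because $\cW^{uu}(y)\subset W^u(x)$ whenever $y\in W^u(x)$ for $x\in K_0$; thus $\Lambda'\subset\Lambda$ is an invariant uu-sublamination.

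For the weak-attractor property, I would fix a $cu$-disk $D:=W^u_{loc}(x_0)\subset\Lambda'$ for some $x_0\in K_0$ and exploit that along $\Lambda$ the bundle $E^s$ is transverse to $E^{uu}\oplus E^c$ with complementary dimensions, so the tube
\[
U\;:=\;\bigcup_{z\in D}\cW^s_{loc}(z,\eps)
\]
is an open neighborhood of $D$ in $M$ for $\eps>0$ small enough. For any $w\in U$ there is $z\in D$ with $d(f^n(w),f^n(z))\to 0$, hence $\omega(w)=\omega(z)\subset\overline{\cO^+(z)}\subset\Lambda'$. To show topological transitivity of $f|_{\Lambda'}$ I would invoke the $\lambda$-lemma for the basic set $K_0$: given two relatively open $V_1,V_2\subset\Lambda'$, density of $W^u(K_0)$ in $\Lambda'$ provides small $cu$-disks $D_i\subset V_i\cap W^u(x_i)$ with $x_i\in K_0$, and the $\lambda$-lemma implies that forward iterates of $D_1$ contain disks $C^1$-close to compact pieces of $W^u_{loc}(x_2)$, and therefore meet $V_2$ after further iteration. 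A Baire category argument then supplies $x\in\Lambda'\subset\Lambda$ with $\omega(x)=\Lambda'$.

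Assume now $f$ is transitive on the connected manifold $M$ and pick $x_0\in M$ with $\overline{\cO^+(x_0)}=M$. Because $U$ is a nonempty open subset of $M$, $f^n(x_0)\in U$ for some $n$, and hence $M=\omega(x_0)=\omega(f^n(x_0))\subset\Lambda'$, forcing $\Lambda=\Lambda'=M$. To upgrade this to minimality of $\cW^{uu}$, I would take any minimal uu-sublamination $\Omega\subset M$ (existing by Zorn's Lemma applied to closed uu-saturated subsets): Proposition~\ref{p=structure-minimal} makes $\Omega$ $f^N$-invariant for some $N\geq 1$, and $\Lambda_\Omega:=\bigsqcup_{i=0}^{N-1}f^i(\Omega)$ is an $f$-invariant uu-sublamination that inherits s-transversality and the SH property from $\Lambda=M$ (both conditions are witnessed by data living in a single uu-leaf, hence in $\Lambda_\Omega$). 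Applying Theorem~\ref{t=s-transPartHyp} inside $\Lambda_\Omega$ and then the first part of this Corollary to the resulting hyperbolic set yields a nonempty open $U'\subset M$ with $\omega(w)\subset\Lambda_\Omega$ for $w\in U'$; running the same transitivity argument again forces $\Lambda_\Omega=M$. The invariant part of Proposition~\ref{p=structure-minimal} then identifies the $f^i(\Omega)$ with the connected components of $M$, and connectedness collapses this to $N=1$ and $\Omega=M$, so $\cW^{uu}$ is minimal.

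The main obstacle I anticipate is topological transitivity of $f|_{\Lambda'}$: it requires the $\lambda$-lemma in a form adapted to basic hyperbolic sets of unstable dimension strictly less than $\dim M$, together with care that the reduction to a basic sub-piece $K_0$ is carried out so that \emph{every} relatively open subset of $\Lambda'$ contains a $cu$-disk usable as the initial disk in an inclination-lemma accumulation onto arbitrary target pieces of $W^u(K_0)$.
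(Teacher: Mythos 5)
Your construction is genuinely different from the paper's, and it has a gap at the step where you pass from $K_0$ to a transitive basic sub-piece. The paper's proof is short: by Zorn's lemma one can take $\Lambda'$ to be a \emph{dynamically minimal} $f$-invariant uu-sublamination of $\Lambda$. Such a $\Lambda'$ inherits s-transversality and the SH property from $\Lambda$ (both are witnessed by data inside single uu-leaves, which lie in $\Lambda'$), so Theorem~\ref{t=s-transPartHyp} applies to $\Lambda'$ itself and produces a $cu$-disk $D\subset\Lambda'$; saturating $D$ by local stable manifolds gives the open set $U$, and transitivity of $f|_{\Lambda'}$ is automatic from dynamical minimality. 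The final paragraph is then just an application of Proposition~\ref{p=structure-minimal} to the dynamically minimal set $\Lambda'=M$.

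The gap in your construction: you replace $K_0$ by a basic piece of the maximal invariant set $\widetilde K_0$ of a small neighborhood $V$ of $K_0$, and assert that ``since $\Lambda$ is closed and uu-saturated, its unstable manifolds still lie in $\Lambda$.'' But $\widetilde K_0 = \bigcap_{n\in\ZZ}f^n(\overline V)$ is \emph{not} contained in $\Lambda$ in general: $\Lambda$ being closed and invariant does not force orbits that merely stay $V$-close to $K_0$ to lie in $\Lambda$. A basic piece $K_0'$ extracted from $\widetilde K_0$ can therefore contain points $p\notin\Lambda$; for such $p$ one has $p\in W^u(p)$, so $W^u(p)\not\subset\Lambda$, and then $\overline{W^u(K_0')}\not\subset\Lambda$, so your candidate $\Lambda'$ is not a sublamination of $\Lambda$ at all. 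You cannot simply intersect $\widetilde K_0$ (or $K_0'$) with $\Lambda$ and retain local maximality, and without local maximality the inclination-lemma argument you sketch for transitivity of $\overline{W^u(\cdot)}$ does not run as stated. A workable repair that stays in the spirit of your argument would be to first pass to a dynamically minimal sublamination $\Lambda'\subset\Lambda$ (Zorn), apply Theorem~\ref{t=s-transPartHyp} to it, and use the fact that $\overline{W^u(K_0)}$ is then forced to equal $\Lambda'$ — but at that point you have essentially reproduced the paper's proof, and the detour through basic pieces and the $\lambda$-lemma is unnecessary.

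Two smaller remarks. First, your weak-attractor argument (the stable tube $U$ over a $cu$-disk) and your deduction $\Lambda'=M$ from transitivity of $f$ are both correct and match the paper. Second, your final paragraph establishing minimality of $\cW^{uu}$ is more circuitous than needed precisely because your $\Lambda'$ is not dynamically minimal: you restart from a minimal sublamination $\Omega$, invoke Theorem~\ref{t=s-transPartHyp} and the first claim of the corollary again, and only then apply Proposition~\ref{p=structure-minimal}. With the paper's choice of $\Lambda'$, once $\Lambda'=M$ is established, the decomposition $M=\Omega\sqcup f(\Omega)\sqcup\cdots\sqcup f^{N-1}(\Omega)$ and $N=1$ by connectedness follow directly from Proposition~\ref{p=structure-minimal} without a second pass through the main theorem.
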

\begin{proof}
Let $\Lambda'\subset \Lambda$ be a dynamically minimal $f$-invariant uu-sublamination of $\Lambda$ (meaning that $\Lambda'$ contains no proper invariant sublamination: such an $\Lambda'$  exists by Zorn's lemma). Theorem~\ref{t=s-transPartHyp} implies that $\Lambda'$ contains a disc $D$ tangent to $E^{uu}\oplus E^c$.
Saturating $D$ by local stable manifolds, we obtain an open set $U\subset M$. Then every $x\in U$ lies in $\cW^s(y)$, for some $y\in \Lambda'$, and so $f^n(x)$ accumulates on a subset of $\Lambda'$.  Thus $\Lambda'$ is a weak attractor.
Since any dynamically minimal uu-lamination is a transitive set, $\Lambda'$ is transitive.

If $f$ is transitive, then for $x$ in a dense G$_\delta$ subset of $M$ we have both
$\omega(x)\subset \Lambda'$ and $\omega(x)=M$, so $M=\Lambda=\Lambda'$.
Proposition~\ref{p=structure-minimal} then implies $M= \Omega\sqcup f(\Omega) \sqcup\cdots \sqcup f^{N-1}(\Omega)$
where $\Omega$ is a minimal uu-lamination. If $M$ is connected, it then follows that $N=1$, and hence $\cW^{uu}$ is
a minimal foliation.
\end{proof}

When $\Lambda$ is a hyperbolic set, the SH property is the same as  $\Lambda$ having an expanding center;
if the center is contracting, then $\W^{uu}$ is just the hyperbolic unstable lamination of $\Lambda$.
This set is an attractor, it has finitely many connected components and $\cW^u$ is minimal on each of them.
Note that a hyperbolic transitive weak attractor $\Lambda'$  is an attractor, i.e. $\omega(x)\subset \Lambda'$ for all $x$ in a neighborhood of $\Lambda$. Combining with Proposition~\ref{p=structure-minimal} we thus get:

\begin{corollary}\label{c=s-transHyp}
Let $\Lambda$ be a uniformly hyperbolic, partially hyperbolic uu-lamination for a $C^{1+}$ diffeomorphism $f\colon M\to M$ with  $1$-dimensional   center. 
If $\Lambda$ is s-transverse, then it contains a hyperbolic transitive attractor $\Lambda'$, and each connected component of $\Lambda'$ is a minimal uu-lamination.
\end{corollary}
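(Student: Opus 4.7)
The plan is to split into two cases based on whether the one-dimensional center bundle $E^c$ is uniformly contracted or uniformly expanded on $\Lambda$ (one of which must occur, since $\Lambda$ is uniformly hyperbolic and $E^c$ is dominated both by $E^{uu}$ and over $E^s$). In both cases I produce a dynamically minimal invariant uu-sublamination $\Lambda'\subseteq \Lambda$ via Zorn's lemma; because the s-transversality condition in Definition~\ref{d.transverse} is formulated pointwise along $\cW^{uu}$-leaves, it is automatically inherited by any invariant subset, so Proposition~\ref{p=structure-minimal} applies directly to $\Lambda'$ and yields the decomposition $\Lambda'=\Omega\sqcup f(\Omega)\sqcup\cdots\sqcup f^{N-1}(\Omega)$ into minimal uu-laminations coinciding with the connected components. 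Transitivity of $\Lambda'$ is immediate from dynamical minimality.

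If $E^c$ is uniformly expanded, then $\Lambda$ trivially satisfies the SH property (take $\lambda_{SH}$ to be the uniform center expansion rate), so Theorem~\ref{t=s-transPartHyp} and the proof of Corollary~\ref{c=s-transPartHyp} supply such a $\Lambda'$ containing a $cu$-disk. Since $E^c$ now lies in the hyperbolic unstable bundle $E^u_{\text{hyp}}=E^{uu}\oplus E^c$, this $cu$-disk is an open piece of a full hyperbolic unstable leaf. Thus $\Lambda'$ is a hyperbolic transitive weak attractor that contains the entire unstable manifold of some of its own points; the classical criterion quoted in the paragraph preceding the statement then promotes it to a genuine attractor. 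If instead $E^c$ is uniformly contracted, then $\cW^{uu}$ already coincides with the full hyperbolic unstable lamination of $\Lambda$, so $\Lambda$ itself is automatically a hyperbolic attractor by Smale--Bowen theory; the spectral decomposition produces finitely many transitive hyperbolic attracting pieces, any one of which can serve as $\Lambda'$, and Proposition~\ref{p=structure-minimal} again identifies its components as minimal uu-laminations.

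The only non-routine step is the expanding case, where one must upgrade the weak attractor produced by Corollary~\ref{c=s-transPartHyp} to a genuine attractor. This uses crucially that in the fully hyperbolic setting the $cu$-disk of Theorem~\ref{t=s-transPartHyp} is tangent to the \emph{full} unstable bundle --- a distinction that does not exist in the purely partially hyperbolic SH setting of Corollary~\ref{c=s-transPartHyp}, where one cannot in general conclude that the weak attractor is an attractor. The contracting case, by contrast, requires nothing beyond classical hyperbolic theory, since neither the SH property nor Theorem~\ref{t=s-transPartHyp} is needed there, and s-transversality is only invoked through Proposition~\ref{p=structure-minimal} to identify the cyclic mixing components with minimal uu-laminations.
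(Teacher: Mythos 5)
Your proposal is correct and follows essentially the same route as the paper: split on whether $E^c$ is uniformly expanded or contracted, combine Corollary~\ref{c=s-transPartHyp} with the remark that a hyperbolic transitive weak attractor is a genuine attractor in the expanding case, fall back on classical hyperbolic attractor theory in the contracting case, and invoke Proposition~\ref{p=structure-minimal} to identify the connected components as minimal uu-laminations. Your added observation that the $cu$-disk is tangent to the full hyperbolic unstable bundle (so the weak attractor contains whole unstable leaves) is a useful unpacking of why that remark applies, though the paper treats it as standard.
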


From the robustness of the SH property (Propositions~\ref{p=SH laminations open}),
Pujals-Sambari\-no's robustness of  $\cW^s$-minimality (Proposition~\ref{p=ss minimal}),
the criterion for s-transver\-sality (Propositions~\ref{p=stransversecriterion}),
and the robustness of the s-transversality (Proposition~\ref{p.s-transverse0}),
Corollary~\ref{c=s-transPartHyp} implies following, which gives Theorem~\ref{t=mainopen}:

\begin{corollary}\label{c=s-transPartHyp3}
Suppose  $f\colon M\to M$ is $C^{1+}$, partially hyperbolic with  $1$-dimensio\-nal center, has the SH property and a minimal $\W^s$ foliation. If $\cW^{uu}$ is minimal and $E^{uu}\oplus E^s$ not integrable,  then  $\cW^{uu}$ is $C^1$ robustly minimal: any
$C^{1+}$ diffeomorphism $C^1$ close to $f$ has a minimal $\cW^{uu}$ foliation.

In particular, if $f$ is an Anosov diffeomorphism with $1$-dimensional expanding center such that $\cW^{uu}$ is minimal and $E^{uu}\oplus E^s$ is not  integrable, then $\cW^{uu}$ is $C^1$ robustly minimal  among $C^{1+}$ diffeomorphisms.
\end{corollary}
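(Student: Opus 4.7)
The plan is to deduce this corollary by combining the local s-transversality criterion (Proposition~\ref{p=stransversecriterion}) with the three robustness statements of this section (SH via Proposition~\ref{p=SH laminations open}, $\cW^s$-minimality via Proposition~\ref{p=ss minimal}, s-transversality via Proposition~\ref{p.s-transverse0}) and then invoking Corollary~\ref{c=s-transPartHyp}, which contains all of the real dynamical content.

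First I would establish s-transversality of $M$, viewed as a uu-lamination, for $f$ itself. The hypotheses of Proposition~\ref{p=stransversecriterion} are exactly met: $f$ is partially hyperbolic with one-dimensional center, the $f$-invariant foliation $\cW^{uu}$ is minimal (and therefore dynamically minimal), and $E^{uu}\oplus E^s$ is non-integrable; the proposition then provides local s-transversality of $\cW^{uu}$, so in particular $\Lambda = M$ is s-transverse. Next I would choose a single $C^1$-neighborhood $\cU$ of $f$ on which every diffeomorphism $g$ simultaneously (i) is partially hyperbolic with one-dimensional center ($C^1$-open), (ii) has the SH property (Proposition~\ref{p=SH laminations open}), (iii) has a minimal $\cW^s_g$ foliation (Proposition~\ref{p=ss minimal}), and (iv) has $M$ itself s-transverse as a $g$-invariant uu-lamination (Proposition~\ref{p.s-transverse0} applied with $U=M$ and $\Lambda_g=M$).

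For any such $g \in \cU$ of class $C^{1+}$, I would then apply Corollary~\ref{c=s-transPartHyp} directly to $\Lambda = M$: minimality of $\cW^s_g$ implies that $g$ is transitive, $M$ is connected by hypothesis, and $M$ is by construction an s-transverse partially hyperbolic uu-lamination satisfying SH. The corollary delivers minimality of $\cW^{uu}_g$, which is precisely the sought-after robust minimality statement.

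For the ``in particular'' assertion about Anosov $f\in\cA^{1+}(M)$ with $\cW^{uu}$ minimal and $E^{uu}\oplus E^s$ non-integrable, I would check the two additional hypotheses of the first part. Since $E^c$ is uniformly expanded, the SH property is automatic (take $y=x$ for every $x\in M$, with $\lambda_{SH}$ the uniform contraction rate of $Df^{-1}|_{E^c}$). Minimality of $\cW^{uu}$ forces $f$ to be transitive, and for a transitive Anosov diffeomorphism it is classical (via spectral decomposition and local product structure) that $\cW^s$ is minimal. The first part then applies and gives $C^1$-robust minimality of $\cW^{uu}$ among $C^{1+}$ diffeomorphisms. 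The whole argument is a bookkeeping chain; the only subtle step is the Anosov implication $\cW^{uu}\text{ minimal} \Rightarrow \cW^s\text{ minimal}$, and this reduces to the classical minimality of invariant foliations for transitive Anosov systems.
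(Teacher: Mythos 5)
Your proof is correct and follows essentially the same route as the paper, which immediately before the corollary statement lists exactly this chain: robustness of SH (Proposition~\ref{p=SH laminations open}), robustness of $\cW^s$-minimality (Proposition~\ref{p=ss minimal}), the s-transversality criterion (Proposition~\ref{p=stransversecriterion}), robustness of s-transversality (Proposition~\ref{p.s-transverse0}), and then Corollary~\ref{c=s-transPartHyp}. You have simply made the bookkeeping explicit, correctly noting that $\cW^s$-minimality forces transitivity so that the second half of Corollary~\ref{c=s-transPartHyp} applies, and that for the Anosov case the SH property and $\cW^s$-minimality are automatic.
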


\subsection{Genericity of minimality}
In a future version of this work, we will state further results concerning the genericity of $\cW^{uu}$-minimality among (conservative and dissipative) Anosov diffeomorphisms with $1$-dimensional expanding center and related results about partially hyperbolic  laminations  whose $1$-dimensional center has the SH property.

%!TEX root = ACWGenericMinimalFev2025Part1.tex

\section{Preliminaries on partial hyperbolicity}\label{s=notations}

\subsection{Notations}\label{ss.notation}
Given a map $f\colon X\to X$, a function $\xi\colon X\to \R_{>0}$ and  $n\geq 1$,  we write $\xi_n(x):=  \xi(f^{n-1}(x))\cdots \xi(f(x))\cdot \xi(x)$, or $\xi_{f,n}(x)$ when we want to emphasize the map $f$.

For $a,b \neq 0$ and $\Delta>1$, we write $a\asymp_\Delta b$ to mean that $a/b\in [\Delta^{-1}, \Delta]$  (in particular if  $a\asymp_\Delta b$, then $a$ and $b$ have the same sign).

\subsection{Partial hyperbolicity}
\subsubsection{Tangent splittings}
Let $ A\subset M$ be a compact set that is invariant under a $C^1$-diffeomorphism $f$.
An invariant splitting of its tangent space $T_A M =E\oplus F$ is \emph{dominated}
if there is $N\geq 1$ such that for any $x\in A$ and any unit vectors $u\in E_x$, $v\in F_x$,
$$\|Df^N(v)\|\leq \tfrac 1 2 \|Df^N(u)\|.$$
The bundle $F$ is \emph{uniformly contracted} by $Df$ if $\|Df^N|_F\|\leq \tfrac 1 2$ for some $N\geq 1$
and $E$ is uniformly expanded if it is uniformly contracted by $Df^{-1}$.

A uniformly expanded bundle $E$ will usually be denoted by $E^u$ (or $E^{uu}$); to each point $x\in A$ we  associate a well-defined strong unstable leaf $\cW^u(x)$ tangent to $E^u$
(also denoted by $\cW^{uu}(x)$ when $E=E^{uu}$). To emphasize the diffeomorphism  $f$, 
we will also write $\cW^u_f(x)$.

For $R>0$, the ball of radius $R$ centered at a point $x\in A$ inside $\cW^u(x)$ 
will be denoted by $\cW^u_R(x)$. Having fixed a small constant $\varepsilon_0>0$,
we will also write $\cW^u_{loc}(x):=W^u_{\varepsilon_0}(x)$ and call it the
\emph{local strong unstable manifold of $x$}.
Finally we set $\cW^u_{loc}(A)=\cup_{x\in A} \cW^u_{loc}(x)$.

Similarly, when $F$ is uniformly contracted we will usually denote it by $E^{s}$ (or $E^{ss}$)
and write  $\cW^{s}(x)$ (or $\cW^{ss}(x)$) for the associated leaves.

\subsubsection{Cone fields}
We associate to a dominated splitting $T_AM =E\oplus F$ fixed
cone fields $\mathcal{C}^E$, $\mathcal{C}^F$
i.e. small neighborhoods of the bundles $E$ and $F$ inside the corresponding tangent Grassmannian bundles.
The domination property implies that
these cone fields can be chosen \emph{invariant}:
for any $x\in  A$, the closure of the image $Df(\mathcal{C}^E_x)$ is
contained in the interior of $\mathcal{C}^E_{f(x)}$, whereas the closure of the
backward image $Df(\mathcal{C}^F_x)$ is
contained in the interior of $\mathcal{C}^F_{f^{-1}(x)}$.

\subsubsection{Partial hyperbolicity}
A compact, $f$-invariant set $A$ is \emph{hyperbolic} if it admits a dominated splitting $T_AM =E^u\oplus E^s$ such that
$E^u,E^s$ are respectively uniformly expanded and contracted.
A compact, $f$-invariant set $A$ is \emph{partially hyperbolic} if it admits a dominated splitting $T_AM =E^{u}\oplus E^c\oplus E^{s}$ such that
$E^u,E^{s}$ are non-trivial and respectively uniformly expanded and contracted.

An \emph{unstable lamination} is a  (not necessarily $f$-invariant) set $\Lambda$ contained in a partially hyperbolic set $A$ such that $\Lambda$ contains all the strong unstable leaves
$\cW^u(x)$ for $x\in \Lambda$.

\subsubsection{Quantitative partial hyperbolicity}\label{ss.partial-hyperbolicity}
Partial hyperbolicity of $f$ implies  that there exist a smooth Riemannian structure on $M$, a neighborhood $U$ of $\Lambda$, a $C^1$-neighborhood $\cU$ of $f$ and continuous functions
\[\kappa, \lambda, \mu^-,\mu^+ \colon U\to (0,\infty), \text{ with } \kappa<1<\lambda
\text{ and }
\kappa < \mu^-<\mu^+ < \lambda\] 
such that for any diffeomorphism $g\in \cU$,  we have that any $g$-invariant compact set $A\subset U$
has a (unique) partially hyperbolic splitting $T_UM =  E^u\oplus E^c\oplus E^{s}$ into subbundles which have the same dimensions as for the partially hyperbolic splitting on $\Lambda$ for $f$
and such that for any $x\in A$:
$$\sup_{v\in E^{s} (x), \|v\|=1} \|D_xg(v) \| < \kappa(x),\qquad  \lambda(x)
<\inf_{v\in  E^u(x), \|v\|=1} \|D_xg(v) \|,$$
$$\mu^-(x)< \|D_xg|_{E^c}\|<\mu^+(x).$$
We also write $E^u_g, E^c_g, E^{s}_g$ in order to emphasize the dependence on $g$.

\subsection{Plaque families}

We introduce a system of locally invariant plaque families which will be useful when considering partially hyperbolic systems that are not dynamically coherent.

\begin{figure}[h]
\begin{center}
\includegraphics[scale=.21]{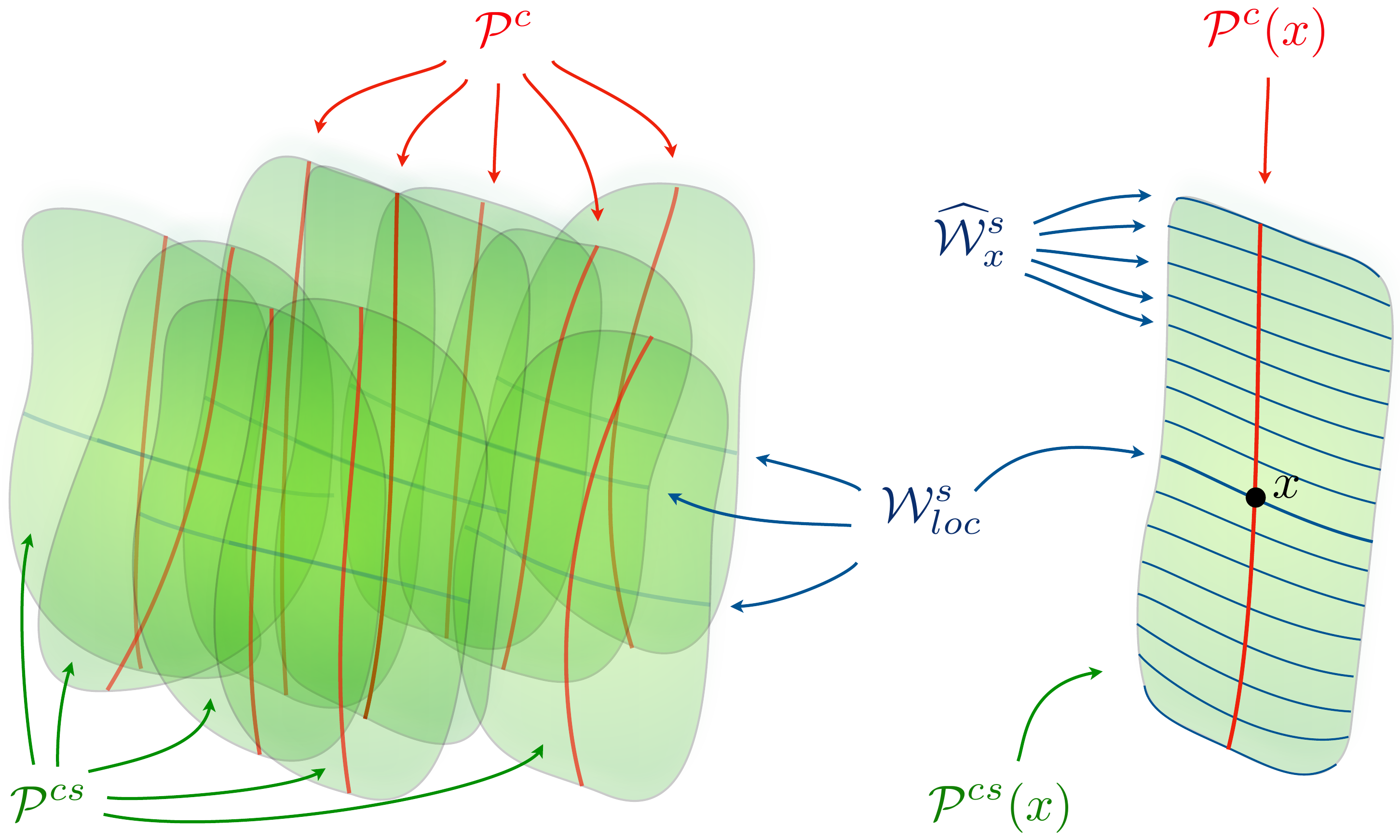}
\caption{Family of locally invariant $\cP^{cs}$ and $\cP^{c}$ plaques, with the $\widehat \W^s_x$ foliation of $\cP^{cs}(x)$ depicted on the right. The construction guarantees that  $\cP^{cs}$ plaques for points in the same $\W^s$ leaf coincide along that  $\W^s$ leaf, and $\widehat\W^s_x \subset \W^s_{loc}(x)$.}
\label{f=plaques}
\end{center}
\end{figure}

\begin{proposition}\label{p.plaque}
Let $f$ be a $C^{1}$ diffeomorphism with a partially hyperbolic set $\Lambda$ and functions  $\kappa, \mu^-, \mu^+ \colon U\to (0,\infty) \text{ with } \kappa<1$  and $\kappa < \mu^- < \mu^+$ as above, on a neighborhood $U$ of $\Lambda$.
Then there exists $\eps>0$, and for any diffeomorphism $g$ that is $C^1$ close to $f$, any $g$-invariant compact set $A\subset U$ and any point $x\in A$,
there exist:
\begin{itemize}
\item[--] $C^1$-embedded disks  (plaques)  $\cP^c_g(x)\subset \cP^{cs}_g(x)$ containing $x$, and
\item[--] a foliation $\widehat \W^s_{g,x}$ of $\cP^{cs}_g(x)$ by $C^1$ discs,
\end{itemize}
with the following properties.
\begin{enumerate}
\item The plaques $\cP^c_g(x)$, $\cP^{cs}_g(x)$ are tangent to $E_g^c(x)$ and $E_g^c(x)\oplus E_g^s(x)$
respectively and $\widehat \W^s_{g,x}(x)\subset \W^s_g(x)$.
\item  Local invariance. For any $x\in A$, any $*\in \{cs, c\}$ and any $y\in \cP^{cs}_g(x,\varepsilon)$, we have
\begin{align*}
g(\cP^{*}_g(x,\varepsilon)) &\subset \cP^{*}_g(g(x)),\quad &g^{-1}(\cP^{*}_g(x,\varepsilon)) &\subset \cP^{*}_g(g^{-1}(x)),\\
g(\widehat \W^{s}_{g,x}(y,\varepsilon)) &\subset \widehat \W^{s}_{g,g(x)}(g(y)),\quad
&g^{-1}(\widehat \W^{s}_{g,x}(y,\varepsilon)) &\subset \widehat \W^{s}_{g,g^{-1}(x)}(g^{-1}(y)),
\end{align*}
where $\cP^c_g(x,\varepsilon)$ denotes the $\varepsilon$-neighborhood of $x$ in $\cP^c_g(x)$.

\item Exponential growth bounds at local scales.  For $x\in M$ and $j\in\ZZ$, denote by $x_j$ the point $g^j(x)$.  Given $n\geq 1$, suppose that $y_j, y_j' \in B(x_j,\eps/2)$ for all $1 \leq j \leq n-1$.
Then the following holds.
\begin{itemize}
\item[--] If $y, y'\in \P^{cs}_g(x,\eps)$,  then $y_n, y_n' \in \P^{cs}_g(x_n)$, and
$$d(y_n,y_n') \leq \mu_{g,n}^+(x) d(y,y') .$$
\item[--] If $y, y'\in \cP^{cs}_g(x,\varepsilon)$, then $y_n, y_n' \in \cP^{cs}_g(x_n)$, and
\[d(y_n,y_n') \leq \mu_{g,n}^+(x) \, d(y,y').\]
\item[--] If $y, y'\in \P^{c}_g(x,\eps)$,  then $y_n, y'_n \in \P^{c}_g(x)$, and
\[\mu_{g,n}^-(x) d(y,y') \leq  d(y_n,y'_n) 
.\]
\end{itemize}
\item  $(g,x)\mapsto \cP^{cs}_g(x), \cP^c_g(x), \widehat \W^s_{g,x}$ vary continuously in the $C^1$ topology.
The spaces $T_q \widehat \W^s_{g,x}(q)$, $q\in \widehat \W^s_{g,x}$,
depend continuously on $(g,x,q)$.
 \item If $g$ is $C^{1+\alpha}$, then  there exists  $\theta\in (0,\alpha)$  such that the tangent bundles to the plaques $\cP^{cs}_g(x)$ and $\cP^c_g(x)$ 
 are uniformly $\theta$-H\"older continuous.
 \end{enumerate}
When there is no ambiguity about the diffeomorphism, we will simply denote the plaques by
$\cP^{cs}(x)$, $\cP^c(x)$, $\widehat \W^s_{x}(y)$.
\end{proposition}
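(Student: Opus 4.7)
The proof will follow the classical graph-transform approach of Hirsch, Pugh, and Shub to plaque families, extended to produce the additional stable subfoliation inside each center-stable plaque. The plan is to set up a suitable graph transform and apply a fixed-point argument simultaneously for all three objects $\cP^c_g$, $\cP^{cs}_g$, and $\widehat \W^s_{g,x}$.

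For each $x\in A$ I would first identify a neighborhood of $x$ in $M$ with a neighborhood of $0$ in $T_xM = E^u_g(x)\oplus E^c_g(x)\oplus E^s_g(x)$ via the exponential map, and express $g$ in these coordinates as a family of local diffeomorphisms $g_x\colon T_xM\to T_{g(x)}M$. After extending each $g_x$ to a globally defined Lipschitz map that coincides with the linearization $D_xg$ outside a small ball (the standard pseudo-$g$ extension of HPS), I would apply the graph transform on the space of continuous sections whose graphs lie in the $E^u$-transverse cone $\mathcal{C}^{cs}$. Domination between $E^u$ and $E^c\oplus E^s$ makes this transform a contraction, producing a unique invariant section and hence the $C^1$ plaques $\cP^{cs}_g(x)$. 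Running a parallel graph transform with the narrower cone around $E^c$ yields $\cP^c_g(x)\subset \cP^{cs}_g(x)$. Local invariance and continuous $C^1$ dependence on $(g,x)$ are immediate from uniqueness of the contracting fixed point applied to the parameter family.

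To construct the subfoliation $\widehat \W^s_{g,x}$ of $\cP^{cs}_g(x)$, I would exploit the fact that along the forward $g$-orbit, the tangent bundles of the plaques $\cP^{cs}_g(g^n(x))$ carry a dominated splitting: the (nonlinear) center direction modeled on $\cP^c_g$ expands at rate bounded below by $\mu^-$, while the transverse direction inside the plaque (close to $E^s$) is contracted at rate at most $\kappa<\mu^-$. A second application of the graph transform, performed within the $C^1$ pseudo-manifold formed by the orbit of $\cP^{cs}_g(x)$ and working along the forward orbit only, produces a locally invariant family of $C^1$ disks foliating a neighborhood of $x$ in $\cP^{cs}_g(x)$. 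The inclusion $\widehat \W^s_{g,x}(x)\subset \W^s_g(x)$ is then forced by uniqueness of the stable manifold: $\widehat \W^s_{g,x}(x)$ is tangent to $E^s_g(x)$ and its points have forward orbits staying in a tubular neighborhood of the orbit of $x$ with the correct contraction rate, so they must lie on the true strong stable manifold. The local invariance identities on pairs $(x,y)$ with $y\in \cP^{cs}_g(x,\varepsilon)$ then reduce to the corresponding identities for the graph-transform fixed point, since iterating $y$ keeps it in the saturation of $\cP^{cs}_g(x)$ and uniqueness forces the two candidate local continuations to coincide.

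The remaining quantitative properties fall out of the construction. The growth bounds follow from the fact that tangent spaces to $\cP^{cs}$ and $\cP^c$ lie in fixed cones around $E^c\oplus E^s$ and $E^c$ respectively, so norms of $Dg^n$ restricted to these tangent spaces are pinched by $\mu^+_{g,n}$ and $\mu^-_{g,n}$ after shrinking $\varepsilon$ so that nonlinear distortion is absorbed into these continuous functions. The Hölder regularity in the $C^{1+\alpha}$ case is obtained by applying the HPS invariant-section theorem on the Grassmannian bundle: the derivative cocycle has a dominated splitting with exponential rates controlled by $\kappa,\mu^\pm,\lambda$, and combining the contraction ratio of the tangent-plane graph transform with $\alpha$-Hölder regularity of $Dg$ yields a Hölder exponent $\theta\in(0,\alpha)$ depending only on these rates. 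The most delicate point I anticipate is coherent invariance of $\widehat \W^s_{g,x}$ across different basepoints $x$: because each center-stable plaque is only locally invariant and the stable subfoliation is \emph{a priori} built plaque by plaque, one must in fact realize the whole family $\{\widehat \W^s_{g,x}\}_{x\in A}$ as a single fixed point of a graph transform over the orbit space of $A$ (not pointwise), and then deduce the pairwise matching identities from uniqueness of that global fixed point; this is the step where the argument must be written with care.
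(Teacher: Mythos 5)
Your proposal follows the Hirsch--Pugh--Shub graph-transform strategy (exponential charts, a pseudo-$g$ extension, invariant sections in cone fields, a second transform inside the $cs$-plaques for the fake stable subfoliation, and the H\"older section theorem for the $C^{1+\alpha}$ regularity), which is exactly the argument the paper invokes: it simply cites Proposition~3.1 of Burns--Wilkinson \cite{BW} for the construction and \cite[Theorem 3.2]{PSW} for the H\"older estimate, noting that the fixed-point produced there depends continuously on $(g,x)$. The point you flag as delicate---running the graph transform once over the whole orbit/bundle rather than plaque-by-plaque so that local invariance and the matching identities come from uniqueness of a single global fixed point---is precisely how \cite{BW} organizes the construction, so your outline matches the cited proof.
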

\begin{proof}
The proof follows from the proof of Proposition 3.1 in \cite{BW}.  There it is proved that if $f\colon M\to M$ is $C^1$ and partially hyperbolic, then there exist $r > \varepsilon> 0$ such that, for every $x\in M$, 
the neighborhood $B(x,r)$ is foliated by local
foliations $\hW^s_x$, $\hW^c_x$, and $\hW^{cs}_x$ such that the leaves of  $\hW^s_x$ subfoliate the leaves of  $\hW^{cs}_x$ (from Part (iv) of the proposition).
In the language of Proposition~\ref{p.plaque}, if we set $\cP^c(x) = \hW^c_x(x)$ and $\cP^{cs}(x) = \hW^{cs}_x(x)$, then Properties (1)-(3) and (5) for $f$ (corresponding to Properties 
(i)-(vi) in   \cite[Proposition 3.1]{BW})  are satisfied. The arguments also apply to diffeomorphisms $g$ that are $C^1$-close to $f$. The $C^1$-leaves $\cP^{cs}_g(x), \cP^c_g(x), \widehat \W^s_{g,x}$ at a point $q$ are obtained as a fixed point of a $C^1$ map, hence vary continuously
with $(g,x,q)$ in the $C^1$-topology, which gives Property (4).

As in \cite{BW}, Property (5) follows from a standard application of the H\"older section theorem \cite[Theorem 3.2]{PSW} to the invariant bundles appearing in the proof of  \cite[Proposition 3.1]{BW}. \end{proof}

\subsection{More on the SH property}
We derive some basic properties of $uu$-laminations with the SH property.
\subsubsection{Robustness} We already mentioned that the SH property is robust.
\begin{proposition}[\cite{PujSam}]\label{p=SH laminations open} Let $\Lambda$ be a partially hyperbolic $uu$-lamination   for a diffeomorphism $f\colon M\to M$.  Assume that $\Lambda$ has the SH property. 
Then there exists a neighborhood $U$ of $\Lambda$ and $\cU$ of $f$ in $\Diff^1(M)$ such that for all $g\in \cU$,  every $uu$-lamination $\Lambda_g\subset U$ has the SH property.
\end{proposition}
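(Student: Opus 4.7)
The plan is to reduce the SH condition to a uniform finite-time bound that is manifestly continuous in $f$, and then to transfer this bound from $f$ to nearby $g$ via the continuity of the extended center bundle and of strong unstable manifolds. First, I claim that SH with constants $(R, C, \lambda_{SH})$ is equivalent, up to adjustment of constants, to the following finite-time reformulation: for some $N_0 \in \NN$ and some $\lambda' > \lambda_{SH}$, for every $x \in \Lambda$ there is $y \in \cW^{uu}_f(x, R)$ satisfying
\[\|Df^{-N_0}|_{E^c_f(f^{\ell + N_0}y)}\| \leq (\lambda')^{N_0} \quad \text{for all } \ell \geq 0.\]
The forward implication follows by choosing $N_0$ large enough that $C \leq (\lambda'/\lambda_{SH})^{N_0}$. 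The reverse implication uses submultiplicativity of operator norms, decomposing $Df^{-n}$ into $\lfloor n/N_0 \rfloor$ blocks of $Df^{-N_0}$ along the backward orbit (each bounded by $(\lambda')^{N_0}$) together with a residual factor bounded uniformly by compactness.

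From Section~\ref{ss.partial-hyperbolicity}, on a $C^1$-neighborhood $\cU$ of $f$ and neighborhood $U$ of $\Lambda$, the bundle $E^c_g$ is well-defined on every $g$-invariant compact subset of $U$ and depends continuously on $g$ in the $C^0$-topology (via its defining invariant cone field). Consequently, the map $\Phi_g(z) := \|Dg^{-N_0}|_{E^c_g(z)}\|$ is uniformly continuous on $\cU \times \bar U$, and the strong unstable manifolds $\cW^{uu}_g$ vary continuously with $g$ in the $C^1$-topology. Fixing $\lambda'' > \lambda'$, by shrinking $\cU$ and $U$ we may arrange that for every $g \in \cU$ and $z \in U$, $|\Phi_g(z) - \Phi_f(z)| < (\lambda'')^{N_0} - (\lambda')^{N_0}$, so the closed set $\Sigma_f := \{z \in U : \Phi_f(z) \leq (\lambda')^{N_0}\}$ is contained in $\Sigma_g^{(\lambda'')} := \{z \in U : \Phi_g(z) \leq (\lambda'')^{N_0}\}$.

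The main argument proceeds by contradiction. Suppose the $g$-finite-time SH bound with constants $(R', N_0, \lambda'')$ fails for some sequence $g_n \to f$ in $\cU$ and some $x_n \in \Lambda_{g_n} \subset U$. Passing to subsequences, $x_n \to x_\infty \in \Lambda$; apply $f$-finite-time SH at $x_\infty$ to produce $y_\infty \in \cW^{uu}_f(x_\infty, R)$ and use the $C^1$-continuity of $\cW^{uu}$ to select $y_n \in \cW^{uu}_{g_n}(x_n, R')$ with $y_n \to y_\infty$. By the failure assumption, each $y_n$ admits an offending iterate $\ell_n \geq 0$ at which $\Phi_{g_n}(g_n^{\ell_n + N_0} y_n) > (\lambda'')^{N_0}$. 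If $\{\ell_n\}$ is bounded, then joint continuity at a subsequential limit produces a point on the forward $f$-orbit of $y_\infty$ that violates $\Sigma_f$, directly contradicting $f$-finite-time SH at $y_\infty$. The main obstacle is the case $\ell_n \to \infty$, in which the $g_n$-orbit of $y_n$ generically diverges from the $f$-orbit of $y_\infty$, precluding a direct pointwise comparison. Following Pujals--Sambarino~\cite{PujSam}, the resolution is to show that the limit $w_\infty$ of $g_n^{\ell_n + N_0} y_n$ lies in the closure of $\{f^\ell y(x) : x \in \Lambda, \ell \geq N_0\}$, where $y(x)$ ranges over all $f$-SH points; this closure is a closed, forward-$f$-invariant subset of $\bar U$ entirely contained in $\Sigma_f$ by finite-time SH, whence $\Phi_f(w_\infty) \leq (\lambda')^{N_0}$ and, by the proximity in Step~2, $\Phi_{g_n}(g_n^{\ell_n + N_0} y_n) \leq (\lambda'')^{N_0}$ for large $n$, a contradiction. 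Verifying this accumulation property is the key technical step; it exploits the Hausdorff-closeness of $A_{g_n}$ to $A_f$, the uniformity of the $f$-SH condition over $\Lambda$, and the continuity of $E^c$ under perturbation.
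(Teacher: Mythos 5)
The paper does not actually prove this proposition; it cites Pujals--Sambarino, so there is no in-text argument to compare against. Evaluated on its own terms, your proposal has a genuine gap precisely at the step you yourself single out as ``the key technical step.'' First, a conceptual issue: the finite-time reformulation is \emph{not} ``manifestly continuous in $f$,'' because the quantifier ``for all $\ell \geq 0$'' remains. What you have reformulated is the condition that the entire forward orbit of $f^{N_0}(y)$ lies in the closed set $\Sigma_f := \{z : \|Df^{-N_0}|_{E^c_f(z)}\| \leq (\lambda')^{N_0}\}$; this is an asymptotic, not finite-time, condition, and it is exactly the part of the SH property that resists a direct perturbation argument. (A minor additional point: in the contradiction setup you should take shrinking neighborhoods $U_n \to \Lambda$, $\cU_n \to \{f\}$; with $U,\cU$ fixed, $x_\infty \in \bar U$ only, not $x_\infty \in \Lambda$.)

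The real problem is in the $\ell_n \to \infty$ case. You claim that $w_\infty := \lim g_n^{\ell_n+N_0}(y_n)$ lies in the closure of $\{f^\ell y(x) : x \in \Lambda,\ \ell \geq N_0\}$, hence in $\Sigma_f$, and you do not justify this. In fact the claim appears to be false: by construction $w_n$ is the \emph{first} point along the $g_n$-orbit of $y_n$ at which the estimate fails, so $\Phi_{g_n}(w_n) > (\lambda'')^{N_0}$, and there is no reason for its limit to be near the $f$-SH section $K$. What one \emph{can} extract from the shadowing and from taking limits of the preceding orbit segments $\{g_n^{\ell_n - T + N_0}(y_n),\dots,g_n^{\ell_n -1 + N_0}(y_n)\}$ is that the entire \emph{backward} orbit $\{f^{-j}(w_\infty) : j\geq 1\}$ lies in a slack version of $\Sigma_f$. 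But $\Sigma_f$ is defined by a single time-$N_0$ estimate and is \emph{not} forward-$f$-invariant (only the smaller asymptotic set $K$ is), so ``$f^{-1}(w_\infty) \in \Sigma_f$'' gives no bound on $\Phi_f(w_\infty)$: the estimate at $w_\infty$ involves a fresh application of $Df^{-N_0}$ and can degrade by an uncontrolled factor $\|Df^{-1}\|$ over one step. To close the argument one must use the forward invariance of the genuine section $K$, or work with the hyperbolic core $K_0$ and its $C^1$-persistent continuation together with the geometry of its (cs)-stable set; this is where the actual content of the Pujals--Sambarino proof lies, and it is absent from the proposal.
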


The SH property is a mechanism for  robust minimality of the $\cW^{s}$ foliation.

\begin{proposition}[\cite{PujSam}]\label{p=ss minimal} Let $f\colon M\to M$ be a diffeomorphism that is partially hyperbolic and satisfies the SH property. If $\cW^s$ is minimal, then any diffeomorphism $C^1$ close to $f$ has a minimal foliation $\cW^s$.

In particular $f$ is $C^1$-robustly topologically mixing.
\end{proposition}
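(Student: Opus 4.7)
The strategy combines the robustness of SH (Proposition~\ref{p=SH laminations open}) with a quantitative transfer of $\cW^s_f$-minimality to nearby diffeomorphisms. Fix a $C^1$-neighborhood $\cU$ of $f$ on which the SH property holds with uniform constants $R, C, \lambda_{SH}$, and on which the bundles $E^{uu}_g,E^c_g,E^s_g$, the foliation $\cW^s_g$, and the plaque families of Proposition~\ref{p.plaque} vary continuously.

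The key geometric ingredient is an SH-based construction of $cu$-discs (tangent to $E^{uu}_g \oplus E^c_g$) of a uniform diameter $\rho>0$, independent of $g\in\cU$. The SH bound is equivalent to
\[\|Dg^n|_{E^c_g(g^\ell z)}\| \geq C^{-1}\lambda_{SH}^{-n}\quad \text{for all }n,\ell\ge 1,\]
at a suitable point $z\in \cW^{uu}_g(y,R)$, which says that the center does not contract too fast along the forward orbit of $z$. Starting from a small $cu$-plaque at $z$ drawn from $\cP^{cu}_g$, successive forward iterates are $cu$-graphs over exponentially expanding strong unstable plaques (by the dominated splitting), and the SH bound keeps their center factor from collapsing. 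Passing to a $C^1$ subsequential limit yields a $cu$-disc $D\subset \overline{\bigcup_{n\ge 0}g^n(\cW^{uu}_{g,\mathrm{loc}}(z))}$ of diameter at least $\rho$; the constant $\rho$ depends only on the SH constants and $\cU$.

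Next we transfer $\cW^s_f$-minimality to all $g\in\cU$ quantitatively. Since $\cW^s_f$ is minimal and $M$ is compact, there exists $T>0$ such that $\cW^s_f(x,T)$ is $\rho/4$-dense for every $x\in M$; by continuous dependence of finite local stable leaves in $C^1$, after possibly shrinking $\cU$ we get that $\cW^s_g(x,T)$ is uniformly $\rho/2$-dense for all $g\in\cU$ and $x\in M$. To conclude, fix $g\in \cU$, $x\in M$, and any non-empty open $V\subseteq M$. Apply the $cu$-disc construction at a point $y_0\in V$: the resulting $cu$-disc $D$ lies in the forward orbit closure of $\cW^{uu}_g(z)$ for some $z\in \cW^{uu}_g(y_0,R)$. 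The $\cW^s_g$-saturation $\cW^s_g(D)$ is open (by transversality of $D$ to $\cW^s_g$ of complementary dimension), so by the quantitative density $\cW^s_g(x,T)$ meets $\cW^s_g(D)$, giving $\cW^s_g(x)\ni d$ for some $d\in D$. Since $d$ is approximated by forward iterates of points $q_k\in \cW^{uu}_g(z)$, applying $g^{-n_k}$ and using the $g$-equivariance $g(\cW^s_g(w))=\cW^s_g(gw)$ of stable leaves places $\cW^s_g$-leaves of $g$-iterates of $x$ arbitrarily close to $\cW^{uu}_g(y_0)$; a final use of local product structure and another iteration then yields $\cW^s_g(x)\cap V\ne \emptyset$. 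Hence $\cW^s_g$ is minimal.

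The main obstacle is the construction of $cu$-discs of uniform size from the SH bound (Step~2): SH is a one-point condition per $\cW^{uu}$-ball, and the graph-transform must balance the expanding $E^{uu}_g$-direction against the only weakly controlled $E^c_g$-direction, all while respecting the $C^1$-continuity of the plaque families so the construction survives perturbation. A secondary subtlety is the final scale-matching step, where the $cu$-disc $D$ lives at scale $R$ (possibly much larger than $V$) and one must use $\cW^{uu}_g$-holonomy plus $g$-equivariance of $\cW^s_g$ to localize the intersection into $V$. The robust topological mixing of $g$ is an immediate consequence of $\cW^s_g$-minimality: iterates $g^n(U)$ of any open $U$ contain $cu$-discs of arbitrarily large diameter (by $E^{uu}_g$-expansion), and by $\cW^s_g$-minimality these discs meet any target open $V$ for all sufficiently large $n$.
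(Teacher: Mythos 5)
The paper states this proposition with a citation to \cite{PujSam} and does not reproduce a proof, so there is no internal argument to compare against. Your framework --- robustness of SH, uniform-size $cu$-discs via a graph transform at SH points, and a quantitative ($\rho/2$-dense) form of $\cW^s_f$-minimality that persists to nearby $g$ --- is the correct mechanism and in the spirit of Pujals--Sambarino.

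The transfer step, however, does not close as written, and this is a genuine gap rather than a detail. Having found $d\in D$ with $d\in\cW^s_g(x)$, you pull back by $g^{-n_k}$ to bring the intersection point near $z$; but a point of $\cW^s_g(x)$ is sent by $g^{-n_k}$ to a point of $\cW^s_g(g^{-n_k}(x))$, a \emph{different} stable leaf. Your conclusion that ``$\cW^s_g$-leaves of $g$-iterates of $x$'' accumulate near $\cW^{uu}_g(y_0)$ is therefore not density of $\cW^s_g(x)$ itself, and the ``another iteration'' you invoke would only push the point far from $V$ again. There is also a secondary issue with the passage to the limit disc $D$: the offset between $d$ and $g^{n_k}(q_k)$ is not necessarily contracted by $g^{-n_k}$ unless it lies in the strong-unstable cone, which need not hold since $D$ is a $cu$-graph.

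The fix is to apply the $\rho/2$-density to a forward iterate of $x$, which pulls back onto the \emph{same} leaf because $g^{-n}\bigl(\cW^s_g(g^n(x))\bigr)=\cW^s_g(x)$. Concretely: fix $p$ and $\eps>0$, take the SH point $z\in\cW^{uu}_g(p,R)$ with $R<\eps/2$ (the paper notes $R$ may be taken small), take a $cu$-plaque $P\ni z$ of diameter $<\eps/2$, and for $n$ large let $D_n\subset g^n(P)$ be a $\rho$-sized $cu$-disc. Since $\cW^s_g(g^n(x),T)$ is $\rho/2$-dense, local product structure yields $w_n\in D_n\cap\cW^s_g(g^n(x))$; then $g^{-n}(w_n)\in P\cap\cW^s_g(x)\subset B(p,\eps)\cap\cW^s_g(x)$. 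No limit disc is needed, and the scale-matching problem disappears. Finally, the mixing claim is more elementary than your $cu$-disc sketch suggests: if $\cW^s_g$ is minimal, pick $T$ with $\cW^s_g(q,T)$ $\delta$-dense for all $q$; given open $U,V$, take a small plaque $\cW^s_g(q,r)\subset V$, so that for $n$ large $g^{-n}\bigl(\cW^s_g(q,r)\bigr)\supset\cW^s_g(g^{-n}q,T)$ is $\delta$-dense and meets $U$, i.e.\ $g^n(U)\cap V\ne\emptyset$. Robustness of mixing then follows from robustness of $\cW^s$-minimality.
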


\subsubsection{$\cW^{uu}$-sections}\label{uu-section}
Consider a $uu$-lamination $\Lambda$ for $f$.

\begin{definition}
A set $K\subset \Lambda$ is a \emph{$\cW^{uu}$-section} of $\Lambda$ if it is compact, forward $f$-invariant,
and there exists $R>0$ such that $\cW_R^{uu}(x)\cap K\neq \emptyset$ for each $x\in \Lambda$.
\end{definition}

Note that if $\Lambda$ satisfies the SH property (Definition~\ref{d=SH}) with constants $C>0$ and $\lambda_{SH}>1$, then
the following set is a $\cW^{uu}$-section:
\begin{equation}\label{e=SHsectiondef}
K:=\left\{y \in \Lambda: \|Df^{-n}\mid_{E^c(f^{\ell+n}(y))}\| \leq C \lambda_{SH}^n \text{ for all } n, \ell\geq 1\right\}.
\end{equation}
We associate to $K$  its {\em hyperbolic core} $K_0:= \bigcap_{n\geq 0} f^{n}(K)$, which is an invariant hyperbolic set.
Note that $K$, $K_0$ depend on $C>0$ and $\lambda_{SH}>1$.

\part{Robust minimality via s-transversality}\label{part1}

%!TEX root = ACWGenericMinimalFev2025Part1.tex

Throughout this part, $f$ is a $C^{1+}$ diffeomorphism of a Riemannian manifold $M$
and $\Lambda$ is a partially hyperbolic, $f$-invariant set
with a splitting
\[T_\Lambda M = E^{uu}\oplus E^c\oplus E^s;\quad \hbox{dim}(E^c)=1.\]
We assume that $\Lambda$ is a $uu$-lamination, i.e. it is saturated by $\W^{uu}$ leaves.

This part of the paper is devoted to a proof of Theorem~\ref{t=s-transPartHyp}.  The general proof uses several technical arguments to deal with the possibilities:
\begin{itemize}
 \item[--] there might not exist center-stable and center-unstable manifolds that foliate a neighborhood of $\Lambda$, and 
 \item[--] the center bundle $E^c$ might not be uniformly expanded by $Df$.  
 \end{itemize}
 Without these technicalities, the argument is relatively simple, and so we present it first in a restricted setting.

\section{A sketched proof of Theorem~\ref{t=s-transPartHyp} in a simplified setting}\label{s=simplified proof}
We  sketch a proof of Theorem~\ref{t=s-transPartHyp} under these simplifying hypotheses:
\begin{enumerate}
\item $f$ is a $C^{1+}$ transitive Anosov diffeomorphism, \label{i=trans Anosov}
\item $f$ is partially hyperbolic and dynamically coherent,  \label{i=PHandDC}
\item the center bundle is expanding, $1$-dimensional and has an orientation preserved by $Df$. \label{i=1dexpandingcenter}
\end{enumerate}
\medskip

\noindent{\bf Theorem.} {\em  If the above hypotheses \eqref{i=trans Anosov}-\eqref{i=1dexpandingcenter}  and  s-transversality of $\W^{uu}$ hold, 
 then $\cW^{uu}$ is a minimal foliation.}
\smallskip

The  assumptions \eqref{i=trans Anosov} and \eqref{i=1dexpandingcenter} mean that there is a globally-defined  $Df$-invariant splitting  $TM= E^{uu}  \oplus E^c\oplus  E^s$, where $E^c$ is uniformly expanded, orientable and $1$-dimensional; it follows that  the center-unstable bundle $E^u:= E^{uu}\oplus E^c$ is uniquely integrable, tangent to the unstable foliation $\W^u$ of the Anosov diffeomorphism $f$.  The dynamical coherence assumption \eqref{i=PHandDC} means that the 
bundle $E^s\oplus E^c$ is also  integrable, tangent to an $f$-invariant foliation $\W^{cs}$; by intersecting leaves of the foliations $\W^{cs}$ and $\W^u$, one obtains in addition an $f$-invariant {\em center foliation} $\W^c$.  

These hypothesis hold, for example, if $f$ is the perturbation of a hyperbolic toral automorphism that is partially hyperbolic with $1$-dimensional expanding center. They also hold for any $f\in\cA^r(\TT^3)$, $r> 1$, by \cite{Hammerlindl, Potrie}.

\subsection{Preparation}
The leaves of  $\W^{cs}$ are jointly subfoliated by the leaves of $\W^s$ and $\W^c$; the leaves of $\W^{u}$ are jointly subfoliated by the leaves of  $\W^{uu}$ and $\W^c$.  Hence there are well-defined $\W^{uu}$ and  $\W^{s}$ holonomies bet\-ween center leaves.
We will use the following facts about these holonomy maps.
 \begin{itemize}
 \item[--] For any $x\in M$ and any $x'\in \W^{uu}_{loc}(x)$, the  (restricted) $\W^{uu}$-holonomy $h^{uu}\colon \W^{c}_{loc}(x) \to  \W^{c}_{loc}(x')$ is $C^1$, with derivative varying continuously in the choice of $x,x'$.
\item[--] For any $x\in M$ and any $x'\in \W^{s}_{loc}(x)$, the   (restricted)   $\W^{s}$-holonomy $h^{s}\colon \W^{c}_{loc}(x) \to  \W^{c}_{loc}(x')$ is $C^1$, with derivative varying continuously in the choice of $x,x'$.
\end{itemize}
The first item is a standard fact about expanding strong unstable subfoliations and does not use the fact that the dimension of $E^c$ is $1$. The second item uses  a center-bunching property (see \cite{BW}), which follows from the $1$-dimensionality of the bundle $E^c$.
\medskip

We recall the notation $a\asymp_\Delta b$ introduced in Section~\ref{ss.notation}.
We then say that $a$ and $b$ are \emph{$\Delta$-nearly equal}, or just \emph{nearly equal}, if $\Delta$ is understood to be a constant that may be taken arbitrarily close to $1$ in the context.

We fix an orientation on $E^c$, and using this orientation we define the {\em signed center distance} $\bar d^c$ between two points on the same $\W^c$ plaque: for $x\in M$ and $x'\in \W^c_{loc}(x)$, the signed distance $\bar d^c(x,x')$ is just distance between $x$ and $x'$ along $\W^c_{loc}$ multiplied by $\pm 1$ according to whether  the geodesic arc  in $\W^c$   from $x$ to $x'$  is positively oriented or not.

Then we have the following standard distortion estimate on the derivative $Df\vert_{E^c}$  (compare with Corollary~\ref{c.distortion} in Section~\ref{ss.distortion}).
 \begin{lemma}\label{l=baby distortion}
 For every  $\delta>0$, there is $\eps_\delta>0$ such that for every $x_0\in M$, $x_1, x_2\in \W^c_{loc}(x_0)$, and $k\geq 1$,
if $d(f^k(x_i),f^k(x_j))<\eps_\delta$ for all $i,j$, then
\[\frac{ \bar d^c(x_1,x_0)}{\bar d^c(x_2,x_0)}\asymp_{1+\delta} \frac{\bar d^{c}(f^k(x_1),f^k(x_0))}{\bar d^c (f^k(x_2),f^k(x_0))}.\]
\end{lemma}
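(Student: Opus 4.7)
The plan is a standard bounded distortion argument that exploits the $C^{1+\alpha}$ regularity of $f$ together with the uniform expansion of $E^c$. First, I would apply the mean value theorem along center plaques: parametrizing $\cW^c_{loc}(x_0)$ by oriented arc length, for each $i\in\{1,2\}$ there is a point $\eta_i\in\cW^c_{loc}(x_0)$ on the center arc between $x_0$ and $x_i$ such that
\[\bar d^c(f^k(x_i),f^k(x_0)) \;=\; J_k(\eta_i)\cdot \bar d^c(x_i,x_0),\qquad J_k(\eta):=\prod_{j=0}^{k-1}\|Df|_{E^c}(f^j\eta)\|.\]
Since $Df|_{E^c}$ preserves the orientation of $E^c$ by hypothesis~\eqref{i=1dexpandingcenter}, the signs on both sides agree. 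Taking the ratio of the two identities reduces the lemma to proving that $J_k(\eta_1)/J_k(\eta_2)\in[(1+\delta)^{-1},1+\delta]$ whenever $\eps_\delta$ is chosen small enough.

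Next, I would control the backward contraction of the orbits $\{f^j(\eta_i)\}_{j=0}^{k}$ along center leaves. By~\eqref{i=1dexpandingcenter}, $E^c$ is uniformly expanded with rate $\lambda>1$, so $f^{-1}$ contracts center plaques by at least the factor $\lambda^{-1}$ at every point. Because each $\eta_i$ lies between $x_0$ and $x_i$ on the center plaque, its forward iterates stay on $\cW^c(f^j(x_0))$, and the hypothesis $d(f^k(x_i),f^k(x_j))<\eps_\delta$ gives $d^c(f^k(\eta_1),f^k(\eta_2))<2\eps_\delta$. Pulling back by $f^{k-j}$ then yields
\[d^c(f^j(\eta_1),f^j(\eta_2)) \;\leq\; 2\eps_\delta\,\lambda^{j-k},\qquad 0\leq j\leq k.\]

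Finally, I would invoke the fact that $x\mapsto \log\|Df|_{E^c}(x)\|$ is $\alpha'$-H\"older continuous along center leaves for some $\alpha'\in(0,\alpha]$, uniformly in the base point. This is standard for $C^{1+\alpha}$ partially hyperbolic systems, since the invariant line field $E^c$ is H\"older continuous on $M$. Summing the resulting H\"older estimates along the orbit, and using the geometric backward contraction just established,
\[\bigl|\log J_k(\eta_1)-\log J_k(\eta_2)\bigr| \;\leq\; C\sum_{j=0}^{k-1}\bigl(2\eps_\delta\,\lambda^{j-k}\bigr)^{\alpha'} \;\leq\; \frac{C\,(2\eps_\delta)^{\alpha'}}{1-\lambda^{-\alpha'}},\]
which can be made smaller than $\log(1+\delta)$ by choosing $\eps_\delta$ small, depending only on $\delta$, $\alpha'$, $\lambda$, and the ambient H\"older constant $C$. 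This delivers the desired inequality. The only mildly delicate step is the uniform H\"older continuity of $\log\|Df|_{E^c}\|$ along center leaves with constants that do not depend on the orbit or on $k$; everything else is elementary, and I would expect this to be the main (though mild) technical point.
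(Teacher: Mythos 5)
Your proof is correct and is exactly the standard bounded distortion argument the paper has in mind: the paper does not supply a proof of this lemma, merely labeling it ``standard'' and pointing to Corollary~\ref{c.distortion}, whose proof (via Lemma~\ref{l.basicdistort}) rests on the same two ingredients you use — H\"older continuity of the center bundle (equivalently, of the tangent bundle to the center plaques) and geometric backward contraction coming from uniform expansion of $E^c$, summed into a convergent geometric series. The one small simplification worth noting is that the paper's version (Corollary~\ref{c.distortion}) requires closeness for all $0\le i\le k$, while here the hypothesis only concerns time $k$; you correctly observed that in this simplified setting the uniform expansion of $E^c$ lets you propagate the bound backward, so the two hypotheses are equivalent.
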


When we say that a set of $N$ points lying in a $1$-dimensional $\W^c_{loc}$ plaque is {\em $\Delta$-nearly evenly spaced}, we mean that $d^c$-distances between any two neighboring pairs of points are $\Delta$-nearly equal.

By the lemma, given $\Delta>1$ there is $\widehat \Delta$ such that if $x_0, x_1, x_2$ are positively ordered and $\widehat \Delta$-nearly evenly spaced apart along a $\W^c$ plaque, then the iterates $f^k(x_0), f^k(x_1), f^k(x_2)$ will remain positively ordered and ($\Delta$-)nearly evenly spaced as long as the distances between them remain small ($<\eps_\delta$).

\subsection{An inductive property}
Fix $\tau>0$ small so that the $\W^s$-holonomies (denoted by $h^s$)
between $\W^{cu}_{loc}$ plaques at $\tau$-close points are well-defined.

Let us assume that $\cW^{uu}$ is s-transverse. There are $R,\chi>0$ such that for every  $x\in M$ there exist curves $\gamma,\gamma'\colon [0,1]\to \W^{uu}_R(x)$ satisfying:
\begin{enumerate}
\item[(a)] for all $t\in[0,1]$, $\gamma(t)\in \W^{cs}_\tau (\gamma'(t))$, and
\item[(b)]  $\bar d^c (h^{s}(\gamma'(0)), \gamma(0))< -\chi$ and $\bar d^c (h^{s}(\gamma'(1)), \gamma(1))>\chi$.
\end{enumerate}
Let $L>1$ be a constant depending only on $R$ that bounds the norm of $Dh^{uu}$ and $Dh^s$, where $h^{uu}$ and $h^s$ are holonomy maps  between $\W^c$ plaques along paths of length smaller than $2R$.

Fix an $f$-invariant $uu$-lamination $\Lambda$.
We argue inductively (compare with Proposition~\ref{p=evenspaced} in Section~\ref{ss=cucriterion}):

\medskip

\noindent {\bf Inductive statement $P(N)$.} {\em
For every $\Delta>1$ and $\eps\in (0,\eps_\Delta)$, there exists a set of $N$ points which is $\Delta$-nearly evenly-spaced,
which lie in a single plaque  $\W^c_{loc}(x)\cap \Lambda$, and whose diameter (measured along $\W^c_{loc}(x)$) belongs to $[\eps/\|Df\|, \eps]$.}

\medskip

The case where $N=1$ is trivial. We now suppose $P(N)$ holds for some $N$ and prove $P(N+1)$.
We are given a constant $\Delta>1$ and a scale $\eps>0$.

We fix some $1<\widehat \Delta_1, \widehat \Delta_2$ such that $\log \widehat \Delta_1\ll \log \widehat \Delta_2 \ll \log \Delta$.
Applying  $P(N)$ to some $\widehat \Delta_1$ and to some $\widehat \eps\ll \eps$ small,  we  obtain
$x_1, \ldots, x_{N}\in \Lambda$,  a collection of $\widehat\Delta_1$-nearly evenly spaced points on $\W^{c}_{loc}(x_N)$ with the maximum distance $d(x_1,x_{N}) \in [\widehat \eps/\|Df\|, \widehat \eps)$. The $\W^{uu}$ holonomy $h^{uu}$ between
$\W^c_{loc}(x_{N})$ and $\W^c(y)$, for any $y\in \W^{uu}_R(x_{N})\subset \Lambda$ is $C^1$, uniformly in $R$.  Thus for  $\rho>0$ sufficiently small, the derivative of the  holonomy between  $\W^c_\rho(x_{N})$ and $\W^c(y)$ is nearly constant,  which means that $\widehat \Delta_1$-nearly evenly-spaced points are sent to $\widehat \Delta_2$-nearly evenly-spaced points.  We assume that $\widehat \eps <\rho$.

The s-transversality property gives curves $\gamma,\gamma'$ in $\W^{uu}_R(x_{N})\subset \Lambda$ satisfying the properties (a) and (b) above, with the given values of $R,\tau, \chi$.  
Let  $ x_1'(t), \ldots, x_{N}'(t) = \gamma'(t) \in \Lambda$ be the images of  $x_1, \ldots, x_N$ under the holonomies $h^{uu}\colon \W^{c}_{\widehat \eps}(x_{N})\to  \W^{c}_{loc}(\gamma'(t))$.  These points vary continuously with $t$ while they remain very nearly evenly spaced, since $\eps_\ell <\rho$.  The distance between any successive points $x_i'(t),  x_{i+1}'(t)$ is at most $L \widehat \eps$.

Define a function $\phi\colon [0,1]\to\RR$ by
\[\phi(t) = \bar d^c(h^{s}_t(\gamma'(t)), \gamma(t) ),
\]
where $h^s_t\colon \cW^c_{loc}(\gamma'(t))  \to \cW^c_{loc}(\gamma(t))$ is the local $\W^s$ holonomy.
This function $\phi$ is continuous, and s-transversality implies that
 \[\phi(0)<-\chi< 0<\chi<\phi(1).\] 
 We now choose a special value of $t_0\in[0,1]$.  There are two cases:
\begin{itemize}
\item[--] For $N=1$, we select $t_0\in[0,1]$ such that $\phi(t_0)= \chi/2$.
\item[--] For $N\geq 2$,  since $0< d^{c}( x_{N} '(t),  x_{N-1}' (t)) < L\widehat \eps < \chi$, it follows that
 there exists $t_0\in [0,1]$ such that 
\[\bar d^c(h^{s}_{t_0}(\gamma'(t_0)), \gamma(t_0) ) = \bar d^{c}(x_{N}' (t_0), x_{N-1}' (t_0)).
\]
\end{itemize}

\begin{figure}[h]
\begin{center}
\includegraphics[scale=.22]{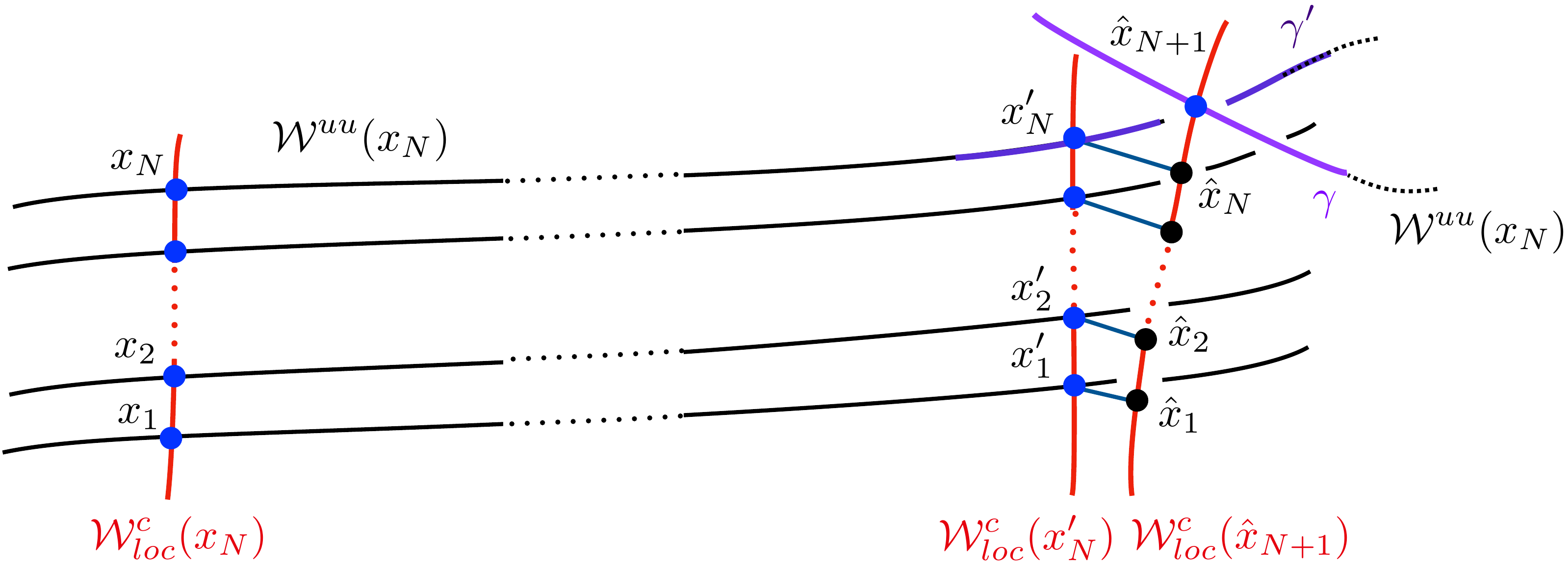}
\caption{Choice of the points $\hat x_1,\ldots, \hat x_{N+1}$. Points belonging to   $\Lambda_\ell(x)$ are colored in blue.}
\label{f=simplifiedproof}
\end{center}
\end{figure}

We set  $\hat  x_{N+1} = \gamma(t_0)\subset \Lambda$.  We also set
$\hat x_{i} =  h^s_{t_0}( x_i' (t_0))$, for $i=1,\ldots, N$.
Then $\hat x_{1}, \ldots, \hat x_{N+1}$ lie on $\W^{c}_{loc}(\hat  x_{N+1})$, are very nearly evenly spaced, with distance between these points bounded above by $L^2\widehat \eps$.  The point  $\hat x_{N+1}$ belongs to $\Lambda$, and  $\hat x_{1}, \ldots, \hat x_{N}$ belong to $\W^s_\tau (x_{i}'(t_0)) \subset \W^s_\tau(\Lambda)$.   See Figure~\ref{f=simplifiedproof}. If we then iterate the  points $\hat x_{1}, \ldots, \hat x_{N+1}$   forward by $f^k$, the distances between successive points  become larger but, since  $f$ is $C^{1+}$, the points remain $\Delta$-nearly evenly spaced as long as they remain within some fixed scale $\eps_\delta>0$ (as given by Lemma~\ref{l=baby distortion}). At the same time, the distance between $f^k(\hat x_{N+1})$ and $f^k(x_{N+1}')\in \Lambda$ goes to zero exponentially fast.
We take $k$ such that
$d(f^k(\widehat x_1), f^k(\widehat x_{N+1})) \in [\tfrac{\eps}{\|Df\|}, \eps]$.

We perform this construction for a sequence $\widehat \eps_\ell \to 0$
and get points \[ x_{\ell,1}:= f^{k_\ell}(\hat x_{\ell,1}), \ldots, x_{\ell,N+1}:= f^{k_\ell}(  \hat x_{\ell,N+1})\; \in \Lambda\] $\Delta$-nearly evenly spaced in $\W^c_{loc}(x_{\ell,N})$, with $d(x_{\ell,1}, x_{\ell,N+1}) \in [\tfrac{\eps}{\|Df\|}, \eps]$ and  $d( x_{\ell,N+1},\Lambda)\to 0$ as $\ell\to \infty$.  Extracting a subsequence (by compactness of $\Lambda$), we obtain the desired limiting points $x_1 \ldots, x_{N+1}\in \Lambda$, proving $P(N+1)$, and completing the induction.

\subsection{Conclusion of the proof}
We now fix $\Delta>1$ and $\eps>0$  sufficiently small. For each $N\geq 1$,  Property $P(N)$ gives an ordered sequence of points
\[x_{1}(N), x_{2}(N), \ldots, x_{N}(N)\in \Lambda\] $\Delta$-nearly evenly spaced  in $\W^c_{loc}(x_{N}(N))$ with
$d^c(x_{1}(N),x_{N}(N)) \in [\tfrac{\eps}{\|Df\|}, \eps)$.  Sending $N\to \infty$ and extracting a Hausdorff convergent subsequence of $\{x_{1}(N), \ldots, x_{N}(N)\}$,  we obtain a center disk $\W^c_{\eps'}(x)\subset \Lambda$, where $\eps'\in \bigl[\eps/\|Df\|, \eps\bigr]$. Since $\Lambda$ is $\W^{uu}$-saturated and $\W^u$ is bifoliated by $\W^c$ and $\W^{uu}$, the union of $\W^{uu}_{loc}$ plaques through points in  $\W^c_{\eps'}(x)$ gives a $\W^u$ disk in $\Lambda$.
Corollary~\ref{c=s-transPartHyp} then concludes that $\W^{uu}$ is minimal.
This completes the outline of the proof of Theorem~\ref{t=s-transPartHyp} under  the simplifying assumptions.

\section{Properties of foliated plaques and the details on  s-transversality}
\label{s.preliminaries-part2}
In this section, we consider a $C^1$ diffeomorphism $f$ preserving a partially hyperbolic $uu$-lamination $\Lambda$ with $1$-dimensional center. We develop some properties of foliated plaque families for the proof of   Theorem~\ref{t=s-transPartHyp}.  We also fill in the details about s-transversality (Propositions~\ref{p.s-transverse} and ~\ref{p=stransversecriterion}).

\subsection{Center and stable distances}\label{ss.plaque}

Let $d^{uu}$ be the induced Riemannian metric on $\W^{uu}$ leaves of $\Lambda$.  For $x\in \Lambda$ let $d^c_x, d^s_x$ be the induced Riemannian metrics on $\P^c_x$, and on the leaves of the local foliation  $\hW^s_x$, respectively.
Proposition~\ref{p.plaque} implies the following lemma (see Figure~\ref{f=pathinplaques}).
For  $x,y\in \Lambda$ close, this lemma gives a way to project $y$ onto $\cW^{uu}_{loc}(x)$ by intersecting with the plaque $\cP^{cs}(y)$.

In order to simplify the presentation, one will assume that the bundles $E^{uu}$, $E^c$ and $E^s$ are nearly orthogonal.
To handle the general case, one should replace explicit numbers in the estimates that follow by a constant that
only depends on the angles between these bundles.

\begin{lemma}\label{l.plaque}
If $\eps_0$ is small, then
for every $x\in \Lambda$ and $y\in B(x,\tfrac{\eps_0}{2})\cap\Lambda$, there is a unique 
path  $\sigma = \sigma_1\cdot\sigma_2\cdot \sigma_3$, $\sigma_i\colon [0,1]\to M$, from $y$ to $x$ with  $\sigma_1$ geodesic in $\cP^c(y)$, $\sigma_2$ geodesic in $\hW^{s}_y(\sigma_1(1))$, and $\sigma_3$ geodesic in $\W^{uu}_{loc}(x)$.

This path varies continuously in $(x,y)$.
\end{lemma}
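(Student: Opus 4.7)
The plan is to construct the intermediate points of the path by two successive transverse intersection arguments, then connect them by short geodesics within the appropriate $C^1$ discs.

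First, I will locate the junction point $p := \sigma_2(1)=\sigma_3(0)$. The plaque $\cP^{cs}(y)$ is a $C^1$ disc tangent at $y$ to $E^c(y)\oplus E^s(y)$, while $\cW^{uu}_{\text{loc}}(x)$ is a $C^1$ disc tangent at $x$ to $E^{uu}(x)$. Because the bundles $E^{uu}$, $E^c$, $E^s$ make uniformly positive angles along $\Lambda$ and their dimensions sum to $\dim M$, the two discs meet transversally. Assuming $\varepsilon_0$ small enough (compared to the plaque size $\varepsilon$ of Proposition~\ref{p.plaque} and to the angle bounds), the plaque $\cP^{cs}(y)$ passes within distance $O(\varepsilon_0)$ of $x$ since $\cP^{cs}(x)$ passes through $x$ and the plaques vary continuously. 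The inverse function theorem applied in a local chart straightening the bundles then yields a unique intersection point $p\in \cP^{cs}(y)\cap \cW^{uu}_{\text{loc}}(x)$ inside a controlled neighborhood of $x$.

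Next, I will locate the point $q := \sigma_1(1)=\sigma_2(0)$. By Proposition~\ref{p.plaque}, the disc $\cP^{cs}(y)$ is foliated by the $C^1$ leaves of $\widehat\W^s_y$, which are tangent to $E^s$; the one-dimensional plaque $\cP^c(y)\subset \cP^{cs}(y)$ is tangent to $E^c(y)$ and thus transverse inside $\cP^{cs}(y)$ to this foliation. Another inverse function theorem application inside $\cP^{cs}(y)$ gives a unique intersection $q\in \widehat\W^s_y(p)\cap \cP^c(y)$ in a controlled neighborhood of $y$, provided $\varepsilon_0$ is small. Having fixed $y$, $q$ and $p$, all three of them lie close (within distance $O(\varepsilon_0)$) to their starting points in the respective $C^1$ discs, so the short geodesic arcs $\sigma_1\subset \cP^c(y)$ from $y$ to $q$, $\sigma_2\subset \widehat\W^s_y(q)$ from $q$ to $p$, and $\sigma_3\subset \cW^{uu}_{\text{loc}}(x)$ from $p$ to $x$ are each uniquely defined. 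Uniqueness of the whole path follows from the uniqueness of $p$ (from the first transverse intersection) and of $q$ (from the second).

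Finally, continuity in $(x,y)$ follows from Proposition~\ref{p.plaque}(4): the plaques $\cP^{cs}_g(y)$, $\cP^c_g(y)$, the foliation $\widehat\W^s_{g,y}$, and the strong unstable leaves $\cW^{uu}_{\text{loc}}(x)$ all vary continuously in the $C^1$ topology with their base points, and a transverse intersection of two $C^1$ submanifolds varying continuously in $C^1$ varies continuously in the ambient manifold. Hence $p$, then $q$, and finally the geodesic arcs $\sigma_1,\sigma_2,\sigma_3$ depend continuously on $(x,y)$.

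The main (minor) obstacle is ensuring uniform choices: one must select $\varepsilon_0$ small relative to the plaque radius $\varepsilon$ of Proposition~\ref{p.plaque}, to a lower bound on the angles between $E^{uu}$, $E^c$, $E^s$, and to the modulus of continuity of the tangent planes of the plaques, so that the two inverse function theorem applications succeed uniformly over all $x\in\Lambda$ and $y\in B(x,\varepsilon_0/2)\cap\Lambda$. Once $\varepsilon_0$ is fixed this way, the argument is uniform and the continuous dependence on $(x,y)$ comes for free.
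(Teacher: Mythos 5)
Your proposal is correct and is exactly the argument the paper has in mind: the paper states that Lemma~\ref{l.plaque} follows from Proposition~\ref{p.plaque} without spelling out the details, and your two transversal-intersection steps (first $p\in\cP^{cs}(y)\cap\cW^{uu}_{loc}(x)$, then $q\in\widehat\W^s_y(p)\cap\cP^c(y)$), followed by connecting short geodesics, are the natural way to carry it out. The uniqueness and continuity arguments are also as intended.
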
We set
\[\rho^c(y,x)  = d^{c}_y(y, \sigma_1(1)) = \hbox{Length}(\sigma_1),\]
\[\rho^s(y,x)  = d^{s}_{y} (\sigma_1(1),\sigma_3(0)) = \hbox{Length}(\sigma_2).\] 
While $\rho^c,\rho^s$ are not distances (they are not even symmetric), the maps $\rho^c$ and $\rho^s$ are continuous on $(B(z_0,\tfrac{\eps_0}{4})\cap \Lambda)^2$, for any $z_0\in\Lambda$.

\begin{figure}[h]
\begin{center}
\includegraphics[scale=.21]{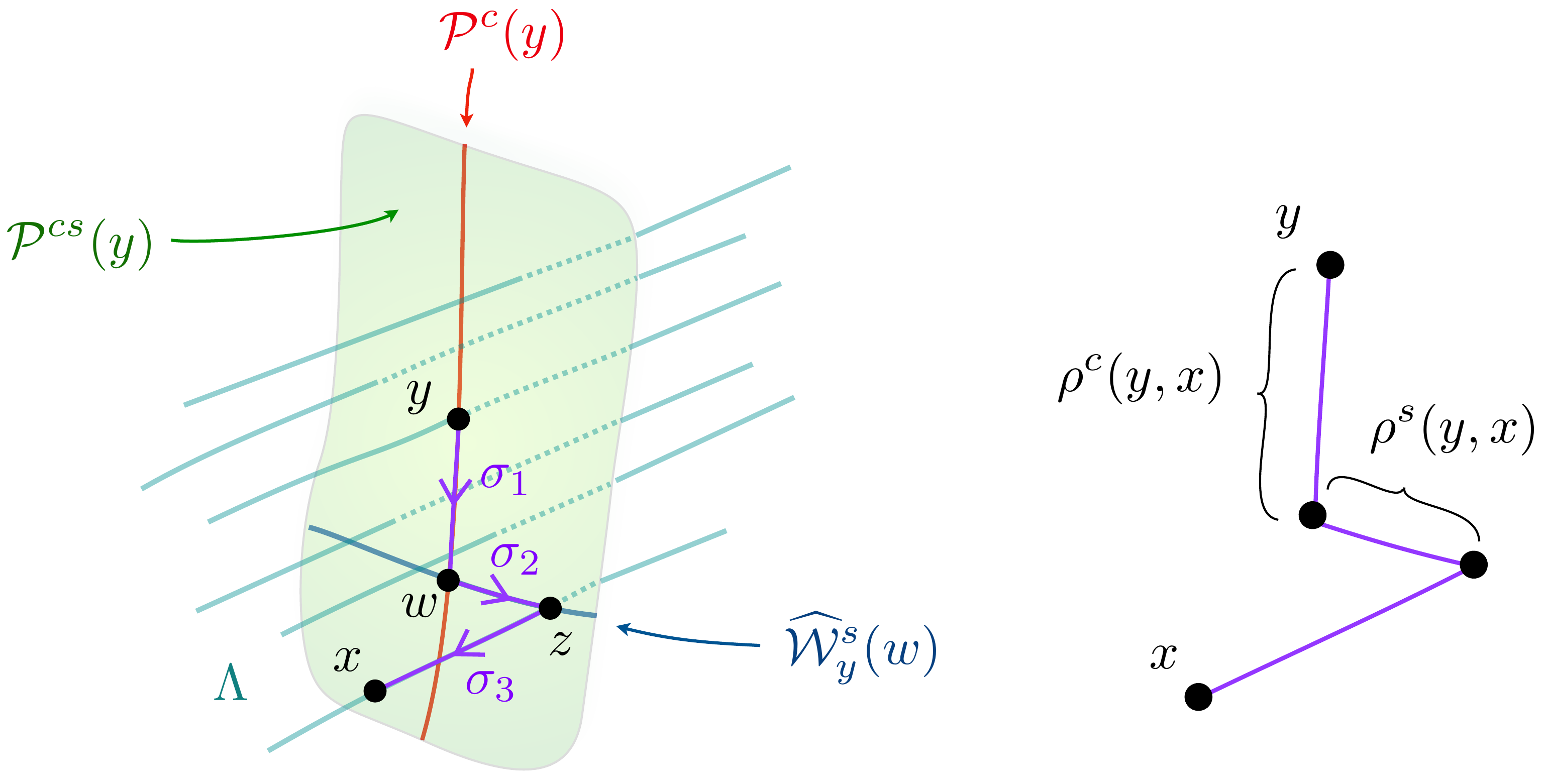}
\caption{The paths in Lemma~\ref{l.plaque}, which are used to define the ``distances"  $\rho^c(y,x), \rho^s(y,x)$. }
\label{f=pathinplaques}
\end{center}
\end{figure}

The next lemma shows that
given $x,y\in \Lambda$ close, $y$ is the projection (onto  $\cW^{uu}_{loc}(y)$)
of some point $x'\in \cW^{uu}_{loc}(x)$. Hence the union of center-stable plaques centered at points of the local manifolds
$\cW^{uu}_{loc}(x)$ is a uniform neighborhood of $x$.
\begin{lemma}\label{l.cover}
If $\eps_0$ is small enough,
for any $x\in \Lambda$ and $y\in B(x,\tfrac{\eps_0}{10})$, there exists $x'\in \W^{uu}_{loc}(x)$
such that $y\in \cP^{cs}(x)$.
In particular, there is $z\in \cP^c(x')$ such that
$y\in \widehat \W^s_{x'}(z)$.
\end{lemma}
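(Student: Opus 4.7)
The plan is to realize $y$ as the image of a continuous ``evaluation'' map
\[\Psi\colon \cW^{uu}_{loc}(x)\times \cP^{cs}(x)\to M,\]
and to show that $\Psi$ is surjective onto a uniform neighborhood of $x$. Using Proposition~\ref{p.plaque}(4), for each $x'\in \cW^{uu}(x,\eps_1)$ with $\eps_1<\eps_0$ small enough, I fix a $C^1$ parameterization $\psi_{x'}\colon V\to \cP^{cs}(x',\eps_1)$, where $V$ is a fixed small disk in $F:=E^c(x)\oplus E^s(x)$, normalized so that $\psi_{x'}(0)=x'$ and $d_0\psi_{x'}$ is a continuous, nearly isometric identification of $F$ with $T_{x'}\cP^{cs}(x')$. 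Setting $\Psi(x',v):=\psi_{x'}(v)$ then yields a continuous map.

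Working in a chart centered at $x$ that identifies a neighborhood with a ball in $T_xM=E^{uu}(x)\oplus F$, the leaf $\cW^{uu}_{loc}(x)$ is the graph of a $C^1$ map $\phi$ from a small ball in $E^{uu}(x)$ to $F$ with $\phi(0)=0$ and $d\phi(0)=0$. Since each plaque $\cP^{cs}(x')$ is tangent at $x'$ to $E^c(x')\oplus E^s(x')$, which is close to $F$ for $x'$ near $x$, a first-order expansion in the chart gives
\[\Psi\bigl((u,\phi(u)),v\bigr)=(u,v)+R(u,v),\qquad |R(u,v)|=o(|u|+|v|)\ \text{as } \eps_1\to 0.\]
Hence $\Psi$ is a $C^0$-small perturbation of the identity on the corresponding product ball. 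A standard Brouwer degree argument (straight-line homotopy to the identity stays nonzero on the boundary relative to any interior point) implies that the image of $\Psi$ contains $B(x,\eps_0/10)$ once $\eps_0$ is chosen small enough. For any $y\in B(x,\eps_0/10)$, this yields $x'\in\cW^{uu}_{loc}(x)$ with $y=\psi_{x'}(v)\in \cP^{cs}(x')$, proving the first claim.

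The ``in particular'' clause is then immediate: the plaque $\cP^{cs}(x')$ is foliated by the leaves of $\widehat\cW^s_{x'}$, which are tangent to $E^s(x')$, while $\cP^c(x')\subset\cP^{cs}(x')$ is tangent to $E^c(x')$. Since $\dim E^c=1$ and $E^c(x')\oplus E^s(x')=T_{x'}\cP^{cs}(x')$, the plaque $\cP^c(x')$ is uniformly transverse within $\cP^{cs}(x')$ to the foliation $\widehat\cW^s_{x'}$. Hence, at sufficiently small scale, the leaf $\widehat\cW^s_{x'}(y)$ through $y$ meets $\cP^c(x')$ at a unique point $z$, giving $y\in \widehat\cW^s_{x'}(z)$.

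The main technical worry is to make the estimate on $R$, and the resulting surjectivity radius, \emph{uniform} in $x\in\Lambda$; this is arranged by the compactness of $\Lambda$ together with the joint $(x,x')$-continuity of the plaques provided by Proposition~\ref{p.plaque}(4).
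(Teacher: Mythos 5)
Your proposal is correct and takes essentially the same route as the paper: both construct a continuous ``evaluation'' map parametrized by a base point on $\cW^{uu}_{loc}(x)$ and a vector in the center-stable direction, show it is a $C^0$-small perturbation of the (chart) identity using the uniform continuity of the plaque family, and conclude surjectivity onto $B(x,\eps_0/10)$ by a degree/index argument on the boundary of the product ball. The only cosmetic difference is that the paper builds its map directly from $B^{uu}(x,\rho)\times B^{cs}(x,\rho)\subset T_xM$ via the exponential and a local trivialization of $E^{cs}$, whereas you parametrize via the graph of $\cW^{uu}_{loc}(x)$ and a family of disk parameterizations $\psi_{x'}$; these are equivalent after the obvious change of coordinates.
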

\begin{proof}
The bundle $E^{cs}:= E^c\oplus E^s$ is locally trivial,
hence the spaces $E^{cs}(x')$ for $x'$ close to $x$
can be identified with $E^{cs}(x)$.
For $\rho>0$ small, we construct a continuous map $H$
from the product of balls $B^{uu}(x,\rho)\times B^{cs}(x,\rho)\subset T_{x}M$ to a
neighborhood of $x$ as follows. For any $(\zeta,\xi)$ in the product, let $x'=\exp_{x}(\zeta)$
and identify  $\xi\in E^{cs}(x)$ with some $\xi'\in E^{cs}(x')$. The point $H(\zeta,\xi)$ is then defined to be the image of $\xi'$ by the exponential map at $x'$ along the submanifold $\cP^{cs}(x')$.
The image of $H$ is thus contained in the union of the plaques $\cP^{cs}(x')$
for $x'\in \W^{uu}_{loc}(x)$.

Note that when $(\zeta,\xi)$ is close to $(0,0)$,
the distance $d(H(\zeta,\xi),\exp_{x}(\zeta+\xi))$ is smaller than $\|\zeta+\xi\|/100$.
In particular if $\eps_0$ is small enough,
the restriction of $H$ to the boundary of $B^{uu}(x,\eps_0)\times B^{cs}(x,\eps_0)$
around any point in $B(x,\tfrac{\eps_0}{10})$ has index $1$. This implies that
$B(x,\tfrac{\eps_0}{10})$ is contained in the image of $H$.
\end{proof}

The center-stable plaques $\cP^{cs}$ are in general not coherent (i.e. they don't foliate a neighborhood of $\Lambda$)  but the next lemma gives a level of control over their noncoherence.
\begin{lemma}\label{l=alternate 5.4}
If $\eps_0$ is small enough, then the following holds.
Consider any $x\in \Lambda$ and $x'\in \W^{uu}_{loc}(x)\setminus\{x\}$
such that $\cP^{cs}(x),\cP^{cs}(x')$ intersect at some point $y\in B(x,\eps_0)$,
and let $z\in \cP^c(x)\cap \widehat \cW^s_x(y)$, $z'\in \cP^c(x')\cap \widehat \cW^s_{x'}(y)$.
Then $$d(z,z')< \tfrac 1{10}\min (d(x,z), d(x',z')).$$
\end{lemma}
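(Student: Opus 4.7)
I work in an exponential chart at $x$, identifying a small neighborhood of $x$ in $M$ with a neighborhood of $0$ in $T_xM$, using the (near-orthogonal) decomposition $T_xM = E^{uu}(x)\oplus E^c(x)\oplus E^s(x)$. In this chart $x' \in \cW^{uu}_{loc}(x)\setminus\{x\}$ sits at coordinates $(a,0,0)$ with $a\in E^{uu}(x)$ and $|a|=d(x,x')$. Writing $E^{cs}(x) := E^c(x)\oplus E^s(x)$, both plaques become $C^1$ graphs
\[ \cP^{cs}(x) = \{(\psi_x(v), v) : v\in E^{cs}(x)\}, \qquad \cP^{cs}(x') = \{(a + \psi_{x'}(v), v) : v\in E^{cs}(x)\}, \]
with $\psi_x(0)=0$, $D\psi_x(0)=0$, and $\psi_{x'}(0)=0$ (the $x'$-graph passes through $x'=(a,0)$). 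Proposition~\ref{p.plaque}(5) applied to the bundle $T\cP^{cs}$ gives a uniform H\"older bound $\|D\psi_x\|_{C^0}, \|D\psi_{x'}\|_{C^0} \leq C\eps_0^\theta$ on the $\eps_0$-ball, so by shrinking $\eps_0$ the plaques and the $\widehat\cW^s$-foliations inside them become arbitrarily close in $C^1$ to their affine models.

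Parametrise $y=(\psi_x(v_0), v_0)$ with $v_0 = v_0^c + v_0^s \in E^c(x)\oplus E^s(x)$, so the intersection condition $y\in\cP^{cs}(x')$ becomes $g(v_0)=a$ for $g := \psi_x - \psi_{x'}$. Since $\widehat\cW^s_x$ is tangent to $E^s$ at its basepoints and $\cP^c(x)$ is tangent to $E^c$ at $x$ (and similarly for $x'$), an elementary calculation gives
\[ z = (\psi_x(v_0^c, 0), v_0^c, 0) + R_x, \qquad z' = (a + \psi_{x'}(v_0^c, 0), v_0^c, 0) + R_{x'}, \]
where $R_x, R_{x'}$ are higher-order remainders controlled by the $C^1$-norms of the $\widehat\cW^s$-foliations and by the small angle $O(|a|^\theta)$ between $E^c(x)$ and $E^c(x')$.

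Subtracting and using the intersection identity $a=g(v_0)$, the $E^{uu}$-component of $z-z'$ becomes
\[ g(v_0^c,0) - g(v_0) = -\int_0^1 Dg(v_0^c, tv_0^s)(0, v_0^s)\, dt, \]
which is bounded by $\|Dg\|_{C^0}\,|v_0^s| \leq 2C\eps_0^\theta\,|v_0^s|$, while the $E^{cs}$-component is a similar remainder of order $\eps_0^\theta\,|v_0^s|$. Since $d(x,z)$ and $d(x',z')$ are both nearly equal to $|v_0^c|$ in this chart, in the regime where $|v_0^c|$ is at least a fixed fraction of $|v_0^s|$, the ratio $d(z,z')/\min(d(x,z), d(x',z'))$ is of order $\eps_0^\theta$ and is driven below $1/10$ by taking $\eps_0$ small.

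The main obstacle is the degenerate regime in which $|v_0^c|$ is much smaller than $|v_0^s|$, where the bulk estimate alone is insufficient. To handle this I would exploit the refined observation that $Dg(v)$, being the angular mismatch between the tangent planes of $\cP^{cs}(x)$ and $\cP^{cs}(x')$ at the graph-height $v$, is bounded by the H\"older modulus of $E^{cs}$ applied to their $E^{uu}$-separation, namely $|Dg(v)| \leq C\,|g(v)|^\theta$. Combined with $g(0)=0$ and $g(v_0)=a$, this self-referential inequality, iterated along the segment from $0$ to $v_0$, forces the two plaques to be essentially tangent near their intersection and yields a super-linear lower bound on $|v_0^c|$ relative to the error $|z-z'|$ --- essentially, the plaques cannot cross in the $E^c$-direction on a scale finer than their tangent-plane separation. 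Making this rigorous is the delicate part of the argument and is where the explicit constant $1/10$ is produced.
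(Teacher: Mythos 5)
The approach you take is genuinely different from the paper's, and unfortunately the difference matters: your argument is static (a single-scale graph/H\"older estimate), whereas the paper's proof is essentially dynamical. The paper reduces to showing $d(x,x')\ll\min(d(x,z),d(x',z'))$, then iterates the configuration forward under $f$ (using the local invariance of the plaque families, Proposition~\ref{p.plaque}(2)) until $d(x,x')$ reaches a fixed macroscopic scale where everything is bounded, and then iterates back, using the domination of $E^{uu}\oplus E^c$ to compare the contraction rates of $d(x,x')$ and of the center distances. That domination is the mechanism that kills the regime you call degenerate, and it has no static counterpart.

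Your ``good regime'' computation ($|v_0^c|\gtrsim|v_0^s|$) is fine, but the place you flag as delicate is indeed where the proposal breaks. The self-referential inequality $|Dg(v)|\leq C\,|g(v)|^\theta$ is not justified and is in fact false in general. First, a bookkeeping point: the $E^{uu}$-separation of the two graphs at height $v$ is $|a-g(v)|$, not $|g(v)|$, so the intended inequality should already be $|Dg(v)|\leq C\,|a-g(v)|^\theta$. But even so corrected, it would force $Dg(v_0)=0$, i.e. the two plaques would have to be tangent at their intersection point $y$ --- and there is no reason for that. Proposition~\ref{p.plaque}(1) only asserts tangency of $\cP^{cs}(x)$ to $E^{cs}$ at the basepoint $x$; at a generic point $q$ of the plaque, $T_q\cP^{cs}(x)$ need not equal $E^{cs}(q)$ (which is not even defined for $q\notin\Lambda$), and the two plaques $\cP^{cs}(x),\cP^{cs}(x')$ are not assumed to form a coherent family. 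What Proposition~\ref{p.plaque}(5) actually gives is H\"older control of $T_q\cP^{cs}(x)$ in terms of $d(x,q)$, together with continuity in $x$ from item (4). Stringing these together at the common point $y$ only yields a tangent-plane mismatch of order $d(x,y)^\theta + d(x',y)^\theta + (\text{modulus of continuity in }x\text{ evaluated at }d(x,x'))$. Since $d(x,y)$ may be comparable to $\eps_0$, this is $O(\eps_0^\theta)$, not $O(|a-g(v)|^\theta)$, and it does not vanish at $y$. So your estimate in the degenerate regime gives $d(z,z')=O(\eps_0^\theta |v_0^s|)$, which is not $\ll |v_0^c|$ when $|v_0^c|$ is tiny. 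To close this gap you need the dynamics: push forward until the center and strong-unstable gaps are of fixed size, then use domination on the return trip, exactly as the paper does.
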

\begin{proof}
We choose $0<\eps_0<\eps_1$ small.
Working in exponential charts,
all the bundles are close to constant bundles. Hence it is enough to show:
$$d(x,x')\ll \min (d(x,z), d(x',z')).$$
This is proved as follows:

First iterate  the points $x,x',y$ forward under $f$ while $d(x,y)$ remains smaller than $\eps_1$.
By local invariance of the plaque families,
the geometry of the configuration remains bounded and controls the distance $d(f^n(x),f^n(x'))$.

The distance between $x$ and $x'$ is uniformly expanded; hence there is a last iterate $N$
satisfying our assumptions on the configuration. At that time
$d(f^N(x),f^N(z))$ and $d(f^N(x'),f^N(z'))$ are larger than a fixed number
and $d(f^N(x),f^N(x'))$ is smaller than a fixed number.
Hence there exists $C>0$ such that
$$d(f^N(x),f^N(x'))\leq C \min (d(f^N(x),f^N(z)), d(f^N(x'),f^N(z'))).$$

Now iterate backward to return to the initial configuration.
The integer $N$ can be assumed arbitrarily large if $\eps_1/\eps_0$ is large.
By domination of the splitting $E^{uu}\oplus E^{c}$, the ratio
$d(x,x')/d(f^N(x),f^N(x'))$ is much smaller than
$\min (d(x,z), d(x',z'))/\min (d(f^N(x),f^N(z)), d(f^N(x'),f^N(z')))$.
This gives the required estimate.
\end{proof}

\subsection{Center orientation}\label{ss.center-orientation}
We first prove that the neighborhood of any point $x_0\in \Lambda$ is separated by the brush
\[Br(x_0):= \bigcup_{y\in \W^{uu}_{loc} (x_0)}\W^{s}_{loc}(y)\]
into two components. This will allow us to define two sides of $x_0$, locally.

For any $0<\eps\ll \eps_0$
and $x_0\in \Lambda$ we introduce the set
\[U_{\eps}(x_0):=B(x_0,\eps)\setminus Br(x_0).\]
As defined in Section~\ref{ss.brush}, two points $y,y'\in U_{\eps}(x_0)$
are said to be {\em in the same component} if they can be joined by a continuous path $\gamma_{y,y'}$
in $U_{\eps_0/C_0}(x_0)$, where $C_0>1$ is a quantity that only depends on the angle between the bundles $E^{uu}, E^c, E^s$.

\begin{lemma}\label{l=twocomponents}
If $\eps_0>0$ is small enough, then the set $U_{\eps}(x_0)$
has two components in the above sense, denoted by $U^\pm_{\eps}(x_0)$.
Moreover one can choose $\gamma_{y,y'}$ to lie in $B(x_0,C_0\eps)$.
Hence $U^\pm_{\eps}(x_0)$ do not depend on $\eps_0$.
\end{lemma}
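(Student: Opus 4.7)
The plan is to introduce a locally defined sign function $\sigma \colon U_\eps(x_0) \to \{+,-\}$ whose level sets will be the two components $U^\pm_\eps(x_0)$. Fix a local orientation on the $1$-dimensional center bundle along the contractible leaf $\W^{uu}_{loc}(x_0)$. For $y \in B(x_0,\eps_0/10)$, Lemma~\ref{l.cover} provides $x'(y) \in \W^{uu}_{loc}(x_0)$ with $y \in \cP^{cs}(x'(y))$, together with $z(y) \in \cP^c(x'(y))$ such that $y \in \widehat\W^s_{x'(y)}(z(y))$. Since $\widehat\W^s_{x'(y)}(x'(y))$ is contained in $\W^s_{loc}(x'(y)) \subset Br(x_0)$, membership in $Br(x_0)$ is equivalent to $z(y) = x'(y)$. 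For $y \in U_\eps(x_0)$, I set $\sigma(y)$ to be the sign of the oriented displacement from $x'(y)$ to $z(y)$ along $\cP^c(x'(y))$.

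The crux is to show that $\sigma$ is well-defined and continuous. If $y$ simultaneously belongs to $\cP^{cs}(x')$ and $\cP^{cs}(x'')$ for distinct $x', x'' \in \W^{uu}_{loc}(x_0)$, Lemma~\ref{l=alternate 5.4} gives corresponding points $z \in \cP^c(x')$ and $z'' \in \cP^c(x'')$ with $d(z,z'') < \tfrac{1}{10}\min(d(x',z), d(x'',z''))$. Since $T_{x'}\cP^c$ and $T_{x''}\cP^c$ are nearly parallel by Proposition~\ref{p.plaque}(4), this quantitative closeness forces $z$ and $z''$ to lie on the same side of $x'$ and $x''$ respectively for the transported orientation, proving well-definedness. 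Continuity of the map $y \mapsto (x'(y), z(y))$ follows from Proposition~\ref{p.plaque}(4) combined with Lemma~\ref{l.plaque}, and the characterization of $Br(x_0)$ as the locus $\{z(y) = x'(y)\}$ then implies that $\sigma$ is locally constant on $U_\eps(x_0)$. Consequently, points with opposite signs cannot be joined by a continuous path in $U_{\eps_0/C_0}(x_0)\setminus Br(x_0)$.

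To establish the converse, I verify that any two same-sign points can be connected by a path of length $O(\eps)$ in $B(x_0, C_0\eps) \setminus Br(x_0)$. First, I claim that $Br(x_0) \cap \cP^{cs}(x')$ coincides with the single plaque $\widehat\W^s_{x'}(x')$: any other true stable leaf $\W^s_{loc}(x'')$, with $x'' \in \W^{uu}_{loc}(x_0) \setminus\{x'\}$, meeting $\cP^{cs}(x')$ would, by Lemma~\ref{l=alternate 5.4}, force the associated $z \in \cP^c(x')$ to equal $x''$; but $\cP^c(x') \cap \W^{uu}_{loc}(x_0) = \{x'\}$ by transversality of $E^c$ and $E^{uu}$, contradicting $x'' \neq x'$. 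Given $y_+$ with $\sigma(y_+) = +$, I connect it to a fixed reference $+$-point on $\cP^c(x_0)$ in three legs: slide along $\widehat\W^s_{x'(y_+)}(y_+)$ inside $\cP^{cs}(x'(y_+))$ from $y_+$ to $z(y_+)$; slide along $\cP^c(x'(y_+))$ toward $x'(y_+)$ on the $+$ side, stopping at a point at fixed signed center distance $\delta > 0$ from $x'(y_+)$; then follow the continuous curve $x'' \mapsto q(x'') \in \cP^c(x'')$ at signed center distance $\delta$ from $x''$, as $x''$ traverses $\W^{uu}_{loc}(x_0)$ from $x'(y_+)$ to $x_0$. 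Each leg has length $O(\eps)$ and avoids $Br(x_0)$, so the concatenation lies in $B(x_0, C_0\eps) \setminus Br(x_0)$ for a constant $C_0$ depending only on the angles between the bundles $E^{uu}, E^c, E^s$. Independence of the resulting components from $\eps_0$ is immediate, since $\sigma$ depends only on the germ of the plaque family at $x_0$.

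The principal obstacle is controlling the noncoherence of the $\cP^{cs}$ plaques: without the quantitative bound of Lemma~\ref{l=alternate 5.4}, one cannot rule out that the brush intersects a single $\cP^{cs}$ plaque in multiple stable leaves, nor guarantee that $\sigma$ is well-defined across overlapping plaques. Once that control is in hand, the remaining path construction is a routine local-coordinate argument using the foliated plaque structure of Proposition~\ref{p.plaque}.
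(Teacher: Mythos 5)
Your approach is genuinely different in its mechanics from the paper's, though it leans on the same two supporting results (Lemma~\ref{l.cover} and Lemma~\ref{l=alternate 5.4}). The paper constructs one fixed reference disc $D$ of dimension $\dim(E^{uu})+1$, transverse to the $\widehat\W^s$ leaves and containing $\W^{uu}_{loc}(x_0)$ as a separating codimension-$1$ submanifold; every point of $U_\eps(x_0)$ and every arc is projected to $D\setminus\W^{uu}_{loc}(x_0)$, so ``two components'' becomes a statement about the fixed set $D$, and the continuity of the projection (even though multi-valued) is immediate from Lemma~\ref{l=alternate 5.4} because the target never moves. You instead project to the $1$-dimensional plaque $\cP^c(x'(y))$, which varies with $y$, and encode the two sides via an orientation on $E^c$. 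Conceptually this is the same dichotomy (the two sides of $\W^{uu}_{loc}(x_0)$ in $D$ are the two $E^c$-orientations), but the varying target makes two of your steps more delicate than you acknowledge.

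First, ``continuity of the map $y\mapsto(x'(y),z(y))$'' is not quite right as stated: the assignment $y\mapsto x'(y)$ is genuinely multi-valued (Lemma~\ref{l.cover} gives existence, not uniqueness), so what needs proving is that $\sigma$ itself is locally constant. That means: for $y_1,y_2$ close, one must exhibit choices $x'(y_1),x'(y_2)$ close to each other, then invoke the well-definedness to compare signs. This is doable using the surjectivity of the map $H$ in the proof of Lemma~\ref{l.cover}, but it is a step you need to spell out; the paper avoids it entirely by fixing the target $D$. Second, your claim that $Br(x_0)\cap\cP^{cs}(x') = \widehat\W^s_{x'}(x')$ uses Lemma~\ref{l=alternate 5.4} with an intersection point $y\in\W^s_{loc}(x'')\cap\cP^{cs}(x')$, but the hypothesis of that lemma requires $y\in\cP^{cs}(x')\cap\cP^{cs}(x'')$, and you only know $y\in\W^s_{loc}(x'')$, not a priori $y\in\cP^{cs}(x'')$ (Proposition~\ref{p.plaque}(1) gives the inclusion $\widehat\W^s_{x''}(x'')\subset\W^s(x'')$, not the reverse). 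One can patch this -- at the relevant scale the local stable manifold and the fake plaque $\widehat\W^s_{x''}(x'')$ do coincide, as both are locally invariant and tangent to $E^s$ along $\Lambda$ -- but again this requires a sentence the paper's approach sidesteps. With those two points tightened up, your argument works, and your explicit three-leg path gives the $B(x_0,C_0\eps)$ bound cleanly; the trade-off is that the paper's fixed-disc route requires less bookkeeping of base points.
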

Note that $U_{\eps}(x_0),U^\pm_{\eps}(x_0)$ are defined independently of  the plaque families $\cP^c,\cP^{cs}$, and $\hW^s$.
\begin{proof}
For the proof and as in Section~\ref{ss.plaque} we will assume that the bundles $E^{uu}$, $E^c$ and $E^s$ are nearly orthogonal.
This allows us  to work with explicit numbers rather than with a constant $C_0$.

Using the local coordinates of the exponential map at $x_0$,
we construct  a $C^1$ disc $D$ with dimension $\dim(E^{uu})+1$ that contains $x_0$ and is separated by $\W^{uu}_{loc}(x_0)$, such that for any $y,x\in B(x_0,\eps_0/100)$ with $x\in\Lambda$ and $y\in \cP^{cs}(x)$,
the leaf $\widehat \W^s_x(y)$ intersects $D$ in a unique point.

\begin{figure}[h]
\begin{center}
\includegraphics[scale=.2]{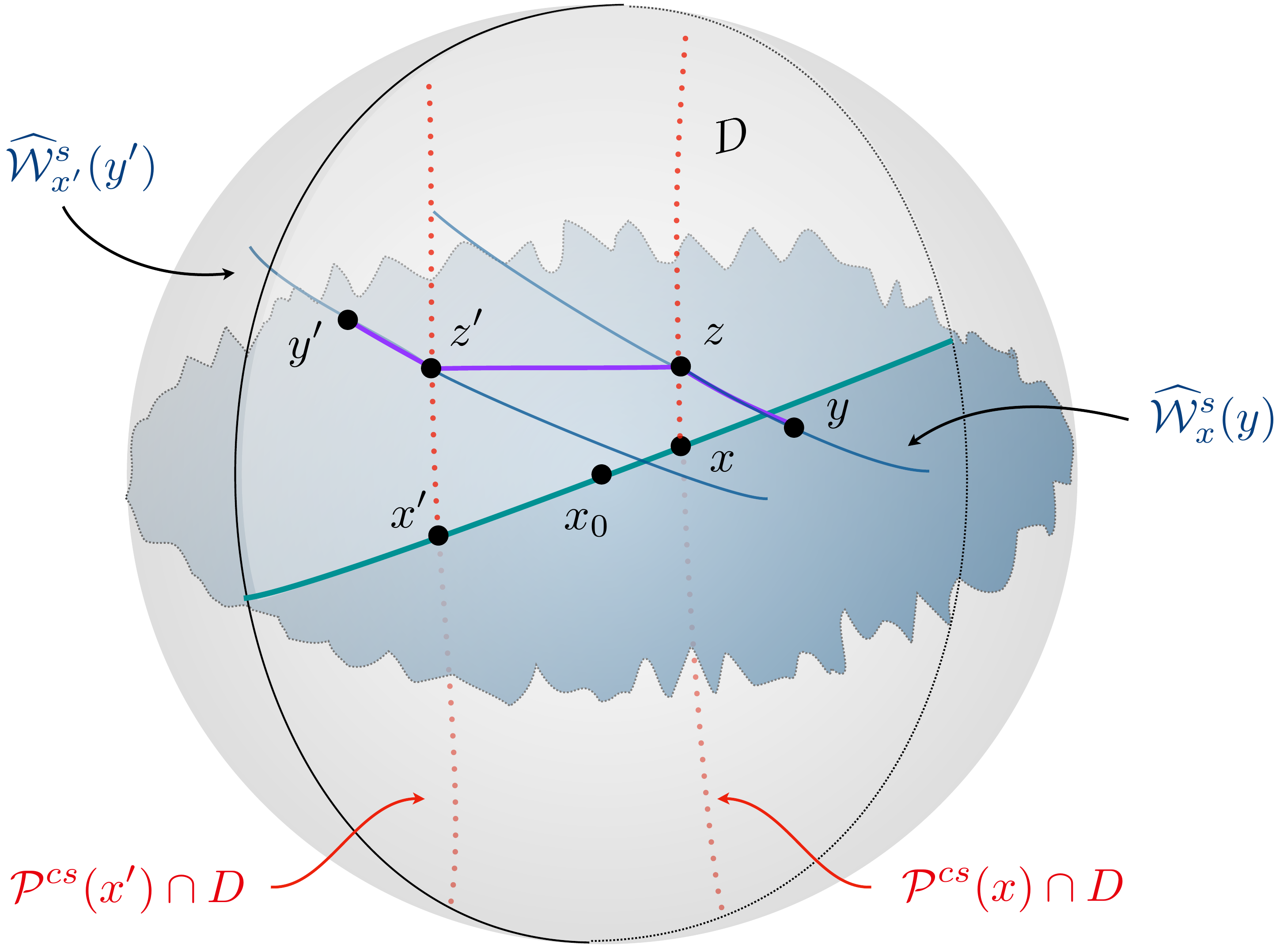}
\caption{First part of the proof of Lemma~\ref{l=twocomponents}.}
\label{f=twocomponents}
\end{center}
\end{figure}

By Lemma~\ref{l.cover}, any point $y\in U_{\eps}(x_0)$
belongs to $\cP^{cs}(x)$ for some $x\in W^{uu}_{loc}(x_0)$.
The leaf $\widehat \W^s_x(y)$ intersects $D$ at a point $z$
but does not meet $Br(x_0)$ by Lemma~\ref{l=alternate 5.4}.
Since $\eps$ is small and and since $y,z$ belong
to $B(x_0,\eps)$, the plaques are close to linear subspaces in the exponential chart at $x_0$. Hence there exists a path $\gamma_y\subset \widehat \W^s_x(y)$
that connects $y$ to $z$ in $U_{5\eps}(x_0)$.

To any other point $y'\in U_{\eps}(x_0)$  we associate in a similar way an arc
$\gamma_{y'}\subset U_{5\eps}(x_0)$ that connects $y'$ to some point $z'\in D$.
The set $D\setminus \W^{uu}_{loc}(x_0)$
has two connected components. If $z,z'$ belong to the same component,
they can be connected by an arc $\gamma_{z,z'}\subset U_{10\eps}(x_0)$ and $y,y'$ can be connected by an arc which is the concatenation of
$\gamma_y,\gamma_{z,z'},\gamma_{y'}$ and which is contained in $U_{10\eps}(x_0)$.
This shows that $U_{\eps}(x_0)$ has at most two components.

Now consider two points $y_1,y_2$ which belong to different connected components of
$D\setminus \W^{uu}_{loc}(x_0)$. We claim that they belong to different components
of $U_{\eps}(x_0)$; this will conclude the proof.
Assume by contradiction that there exists an arc $\gamma_{y_1,y_2}$
connecting  these two points and  contained in $U_{\eps_0/10}(x_0)$.
By Lemma~\ref{l.cover}, any point $y\in \gamma_{y_1,y_2}$ can be projected to $D$:
it belongs to some leaf $\widehat \W^s_x(y)$ with $x\in \W^{uu}_{loc}(x_0)$
intersecting $D\setminus \W^{uu}_{loc}(x_0)$ at some point $z$.
Moreover,  any point in a neighborhood of $y$ can also be projected to $D$~:
the projection is a priori not uniquely defined, but belongs to a connected neighborhood of $z$
in $D$, which by  by Lemma~\ref{l=alternate 5.4} is disjoint from $\W^{uu}_{loc}(x_0)$.
Using these projections, we thus define a continuous arc  inside 
$D\setminus \W^{uu}_{loc}(x_0)$ that connects $y_1$ to $y_2$,
contradicting the choice of the points.
\end{proof}

The next lemma proves that along any path $\gamma$ in a leaf $\W^{uu}(x_0)$,
one can continuously follow the two components $U_\eps^+(x),U_\eps^-(x)$.
This will allow us to compare the components of points that are not close.

\begin{lemma}\label{l.lift}
Consider a leaf $\W^{uu}(x_0)$ with $x_0\in \Lambda$
and an arc $\gamma\colon [0,1]\subset \W^{uu}(x_0)$.
For any component $U^\sigma_\eps(\gamma(0))$,
there exists a path $\overline \gamma$ satisfying
$\overline \gamma(0)\in U^\sigma_\eps(\gamma(0))$ and
$\overline \gamma(t)\in U_\eps(\gamma(t))$ for all $t\in [0,1]$.
The component of $U_\eps(\gamma(1))$ that contains
$\overline \gamma(1)$ does not depend on $\overline \gamma$.
\end{lemma}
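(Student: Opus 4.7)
The plan is to lift the path $\gamma$ in $\W^{uu}(x_0)$ to $\overline\gamma$ by propagating the choice of component locally and then invoking compactness of $[0,1]$. The main technical tool is a ``local comparison of sides'' between the components $U^\pm_\eps(\gamma(t))$ for nearby values of $t$, which will also make the independence claim automatic.

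First I would establish local stability of the separation. Fix $t_* \in [0,1]$ and set $x := \gamma(t_*)$. As in the proof of Lemma~\ref{l=twocomponents}, choose a $C^1$-disc $D_x$ of dimension $\dim(E^{uu})+1$ through $x$, transverse to $E^s$, which is separated by $\W^{uu}_{loc}(x)$ into two components and which intersects every leaf $\widehat\W^s_z(y)$ (for $z\in\W^{uu}_{loc}(x)$ and $y\in\cP^{cs}(z)\cap B(x,\eps_0/100)$) in a single point. By continuous dependence of $\W^{uu}_{loc}(y)$ on $y\in\Lambda$ and of the plaque families $\cP^{cs}$, $\widehat\W^s$ (Proposition~\ref{p.plaque}(4)), there is a neighborhood $V_{t_*}$ of $t_*$ in $[0,1]$ such that, for every $t\in V_{t_*}$, the \emph{same} disc $D_x$ still intersects each leaf $\widehat\W^s_z(y)$ associated to $\gamma(t)$ in a unique point and is separated by $\W^{uu}_{loc}(\gamma(t))$ into two pieces $D_x^\pm(t)$ that vary continuously with $t$. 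The argument of Lemma~\ref{l=twocomponents} then gives a canonical bijection $\Phi_{t_*,t}\colon \{U^\pm_\eps(\gamma(t_*))\}\to \{U^\pm_\eps(\gamma(t))\}$, defined by: a component of $U_\eps(\gamma(t))$ corresponds to the component of $D_x^\pm(t)$ in which its points project (via $\widehat\W^s_z$) to $D_x$.

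Next I would cover $\gamma([0,1])$ using this local stability. By compactness, choose a partition $0=t_0<t_1<\dots<t_n=1$ such that each consecutive pair $t_{i-1},t_i$ lies in a common neighborhood $V_{s_i}$ as above, so that $\Phi_{s_i, t_i}\circ \Phi_{s_i,t_{i-1}}^{-1}$ makes sense. Composing these bijections gives a canonical identification $\Phi\colon \{U^\pm_\eps(\gamma(0))\}\to \{U^\pm_\eps(\gamma(1))\}$. To produce the path $\overline\gamma$, pick $\overline\gamma(0)\in U^\sigma_\eps(\gamma(0))$; inside each $V_{s_i}$, the point $\overline\gamma(0)$ (or a successor obtained by a small displacement that keeps it on the chosen side of each $D_x$) lies in $U_\eps(\gamma(t))$ throughout, so within each $V_{s_i}$ we can construct $\overline\gamma|_{[t_{i-1},t_i]}$ as a continuous path in the open set avoiding $Br(\gamma(t))$; the resulting concatenation is $\overline\gamma$, and by construction $\overline\gamma(1)$ lies in $\Phi(U^\sigma_\eps(\gamma(0)))$.

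The independence statement follows from a standard clopen argument. Given two admissible lifts $\overline\gamma_1,\overline\gamma_2$ with $\overline\gamma_1(0),\overline\gamma_2(0)$ in the same component of $U_\eps(\gamma(0))$, let $T\subset[0,1]$ be the set of $t$ such that $\overline\gamma_1(t),\overline\gamma_2(t)$ lie in the same component of $U_\eps(\gamma(t))$. By the local stability above (together with continuity of $\overline\gamma_i$), $T$ is both open and closed, so $T=[0,1]$ and $\overline\gamma_1(1),\overline\gamma_2(1)$ lie in the same component.

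The main obstacle will be the first step: verifying that a single disc $D_x$ continues to separate a whole family of nearby brushes $Br(\gamma(t))$ and that the induced identification $\Phi_{t_*,t}$ is truly canonical (i.e.\ independent of the choice of $D_x$). This requires combining Lemma~\ref{l.cover} with the non-coherence control in Lemma~\ref{l=alternate 5.4} to ensure that the projection of $U_\eps(\gamma(t))$ onto $D_x$ lands in a single component of $D_x\setminus \W^{uu}_{loc}(\gamma(t))$, and the continuity of plaques from Proposition~\ref{p.plaque}(4) to make the dependence on $t$ continuous. Once this is in place, the lifting and independence follow formally.
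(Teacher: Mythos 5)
Your approach is genuinely different from the paper's, and while the high-level plan (local comparison of sides, then compactness, then a clopen argument for uniqueness) is sound, the construction of the path $\overline\gamma$ is under-specified in a way that hides the real work.

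The paper's construction is much more direct and sidesteps the patching issues entirely: pick $z\in \cP^c_\eps(\gamma(0))\cap U^\sigma_\eps(\gamma(0))$, let $\delta := d^c(z,\gamma(0))$, and then define $\overline\gamma(t)$ to be the point of $\cP^c_\eps(\gamma(t))$ at center-distance $\delta$ from $\gamma(t)$, choosing the side continuously (possible since the plaque family $\cP^c$ varies continuously in the $C^1$ topology by Proposition~\ref{p.plaque}(4)). This is a single, globally defined, manifestly continuous formula. That $\overline\gamma(t)\notin Br(\gamma(t))$ — i.e.\ $\overline\gamma(t)\in U_\eps(\gamma(t))$ — is then exactly what Lemma~\ref{l=alternate 5.4} gives, because any point of $\cP^c(\gamma(t))$ at positive distance $\delta$ from $\gamma(t)$ cannot simultaneously lie on $\W^s_{loc}(x')$ for some $x'\in\W^{uu}_{loc}(\gamma(t))$ without violating the estimate $d(z,z')<\tfrac1{10}\min(d(x,z),d(x',z'))$. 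So the construction and the membership check are decoupled, and each is a one-liner.

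By contrast, your proposal constructs $\overline\gamma$ piecewise on intervals $V_{s_i}$ using fixed reference discs $D_x$, but does not actually specify the selection $t\mapsto\overline\gamma(t)$. The condition $\overline\gamma(t)\in U_\eps(\gamma(t))=B(\gamma(t),\eps)\setminus Br(\gamma(t))$ is a constraint that moves with $t$; a fixed point $\overline\gamma(0)$ will in general leave $B(\gamma(t),\eps)$, and the phrase ``or a successor obtained by a small displacement'' acknowledges but does not resolve this. Moreover, even granting a continuous choice on each $V_{s_i}$, the concatenation is not automatically continuous: at a partition point $t_{i-1}$ the two pieces would need to agree, and your sketch instead produces two potentially different points $p_{i-1},p_i$ in the same component, which you would then have to interpolate over an overlap interval rather than concatenate at a point. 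None of this is fatal, but it is exactly the kind of bookkeeping the paper's choice of $\cP^c$-parametrization is designed to avoid. Your clopen argument for the independence statement is essentially identical to the paper's and is correct.

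So: right idea for uniqueness, but the existence half needs a concrete continuous selection. Replacing your local-disc patches by the single formula ``point of $\cP^c_\eps(\gamma(t))$ at center distance $\delta$ from $\gamma(t)$'' and invoking Lemma~\ref{l=alternate 5.4} closes the gap and drastically shortens the proof.
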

Two points $z_0\in U_\eps(\gamma(0))$ and
$z_1\in U_\eps(\gamma(1))$ will be said \emph{on the same side} of $\W^{uu}(x)$
relative to $\gamma$ if there exists an arc $\overline \gamma$ as in Lemma~\ref{l.lift}
joining the components of $U_\eps(\gamma(0)), U_\eps(\gamma(1))$
and containing $z_0,z_1$ respectively.
\begin{proof}
Choose $z\in \cP^c_\eps(\gamma(0))$ in the component $U^\sigma_\eps(\gamma(0))$
and let $\delta\in (0,\eps)$ be the distance between $\gamma(0)$ and $z$ inside $\cP^c_\eps(\gamma(0))$.
Since the plaque family $\{\cP^c(x)\}$ is continuous,
one can continuously choose $\overline \gamma(t)$ in
$\cP^c_\eps(\gamma(t))$ at a distance $\delta$ from $\gamma(t)$
inside $\cP^c_\eps(\gamma(t))$.
By Lemma~\ref{l=alternate 5.4}, $\overline \gamma(t)$ belongs to $U_\eps(\gamma(t))$ for any $t$.

Consider another arc $\overline \gamma'$.
Since the components $U_\eps^+(x),U_\eps^-(x)$ are open in $U_\eps(x)$,
the set of parameters $t$ such that $\overline \gamma(t)$
and $\overline \gamma'(t)$ belong to the same (resp. different) component
of $U_\eps(\gamma(t))$ is open. By connectedness,
$\overline \gamma(t)$
and $\overline \gamma'(t)$ belong to the same component of $U_\eps(\gamma(t))$ for all $t$.
\end{proof}

\subsection{Robustness, minimality and s-transversality}\label{ss=cleanup}

We first establish the robustness of the s-transversality property stated in Proposition~\ref{p.s-transverse0}.
\begin{proposition}\label{p.s-transverse}
Let $\Lambda$ be an invariant s-transverse $uu$-lamination for $f$. For $\tau>0$ sufficiently small,
there exist  $R,\chi>0$, a neighborhood $U$ of $\Lambda$ in $M$ and a $C^1$ neighborhood $\cU$ of $f$, such that for any $g\in \cU$,
for any $g$-invariant $uu$-lamination $\Lambda_g\subset U$, and for any $x\in \Lambda_g$,
\begin{itemize}
\item[--] $\Lambda_g$ is s-transverse,
\item[--] Items (1)--(2) of Definition~\ref{d.transverse} are satisfied at scale $\tau$ by arcs
$\gamma,\gamma'\subset W_R^{uu}(x)$; moreover $\rho^c(\gamma(0),\gamma'(0))>\chi$ and $\rho^c(\gamma(1),\gamma'(1))>\chi$.
\end{itemize}
\end{proposition}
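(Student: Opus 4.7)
The plan is to first extract uniform constants $R,\chi>0$ for $f$ acting on $\Lambda$ itself by a compactness argument, and then to transfer the arcs so obtained to nearby $uu$-laminations $\Lambda_g$ by using continuous dependence on $(g,x)$ of the $\cW^{uu}$-leaves at bounded scale, of the plaque families of Proposition~\ref{p.plaque}, and of the sides $U^\pm_\tau$.

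For the first step I would fix $\tau>0$ small enough that the objects of Section~\ref{ss.plaque} (plaques $\cP^c,\cP^{cs}$, stable leaves $\widehat\cW^s_x$, the functions $\rho^c,\rho^s$ of Lemma~\ref{l.plaque}, and the lifting of sides in Lemma~\ref{l.lift}) all make sense at scale $\tau$. For each $x\in\Lambda$, Definition~\ref{d.transverse} supplies arcs $\gamma_x,\gamma'_x\colon[0,1]\to\cW^{uu}(x)$, necessarily of some finite length $R_x$, with $\gamma'_x(0)$ and $\gamma'_x(1)$ on opposite sides of $\cW^{uu}(x)$ relative to $\gamma_x$. Since the two components $U^\pm_\tau(\cdot)$ are open (Lemma~\ref{l=twocomponents}) and $\rho^c$ is continuous and strictly positive off the brush, I expect to be able to slightly alter $\gamma'_x$ near its endpoints to obtain $\rho^c(\gamma_x(i),\gamma'_x(i))>\chi_x$ for some $\chi_x>0$, without changing the sides. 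Continuity in the basepoint of $\cW^{uu}_{R_x+1}(\cdot)$, of the plaque families (Proposition~\ref{p.plaque}(4)), of the brushes, and of $\rho^c$, then allows the same arcs (transported via, say, projection along the normal bundle of $\cW^{uu}_f(x)$) to witness s-transversality at nearby points of $\Lambda$, with constants $(R_x+1,\chi_x/2)$. A finite subcover of $\Lambda$ yields uniform $R,\chi>0$.

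For the second step, given $g\in\cU$ and a $g$-invariant $uu$-lamination $\Lambda_g\subset U$, I would pick for each $x\in\Lambda_g$ a point $x_0\in\Lambda$ at distance at most $\mathrm{dist}(x,\Lambda)$ and transport the arcs $\gamma_{x_0},\gamma'_{x_0}$ from $\cW^{uu}_f(x_0)$ into $\cW^{uu}_g(x)$. The required input is the standard fact that leaves $\cW^{uu}_g(\cdot)$ of fixed radius depend continuously on $(g,x)$ in the $C^1$ topology (graph transform), together with continuity of the plaque families of Proposition~\ref{p.plaque} in $(g,x)$. Choosing $U$ and $\cU$ small enough, the pointwise distances, the inclusions $\gamma'(i)\in U_\tau(\gamma(i))$, and the values of $\rho^c$ are perturbed by less than any preassigned amount; the uniform lower bound $\rho^c\geq\chi$ from the first step then survives as $\rho^c\geq \chi/2$ after the transfer.

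The main obstacle I anticipate is the side condition: membership in $U^\pm_\tau(\cdot)$ is defined purely topologically through the brush, which is only a locally flat submanifold, and is therefore not automatically preserved under small $C^1$-perturbations of either $f$ or the basepoint. The saving feature is Lemma~\ref{l.lift}, which identifies the two sides continuously along any $uu$-arc using the $\cP^c$ plaques. Hence the quantitative $\rho^c\geq \chi$ separation established in Step~1 places each transported endpoint well inside one of the components $U^\pm_\tau(\gamma(i))$, and a $C^1$-small perturbation of $f$ and of the basepoint cannot move it across $Br_g(\gamma(i))$. This is the mechanism that turns the qualitative s-transversality of Proposition~\ref{p.s-transverse0} into the quantitative, uniform form stated in Proposition~\ref{p.s-transverse}.
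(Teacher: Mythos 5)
Your proof is correct and follows essentially the same strategy as the paper's: a compactness argument over $\Lambda$ to extract uniform $R,\chi$, followed by continuity of the local unstable leaves, plaque families, and the functions $\rho^c,\rho^s$ to propagate the arcs to nearby basepoints and to $C^1$-nearby pairs $(g,\Lambda_g)$. The one minor quibble is that your "slight alteration" of $\gamma'_x$ near its endpoints is unnecessary: $\gamma'_x(0),\gamma'_x(1)\in U_\tau(\gamma_x(\cdot))$ already lie off the brush, so $\rho^c$ is automatically positive there and one may simply take $\chi_x$ to be half the minimum of the two values.
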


\begin{proof} Suppose $\Lambda$ is s-transverse, and let $\tau>0$ be given by  Definition~\ref{d.transverse}.  Given $x\in \Lambda$,   there exist $R_x ,\chi_x >0$ satisfying the conditions of Definition~\ref{d.transverse}  at the scale $\tau$ by arcs
$\gamma,\gamma'\subset W^{uu}_{R_x}(x)$, with  $\rho^c(\gamma(0),\gamma'(0))>\chi_x$ and
$\rho^c(\gamma(1),\gamma'(1))>\chi_x$.
The continuity of the plaque families and the constructions in the previous subsection imply that for $x'$ sufficiently close to $x$, one can choose $R_{x'}=R_x, \chi_{x'} = \chi_x$.  Compactness of $\Lambda$ implies that $\chi$ and $\rho$ can be chosen uniformly.
This property persists under $C^1$-small perturbations (in both the set $\Lambda$ and in $f$).
\end{proof}

We next justify Remark~\ref{r.single tau}.
\begin{lemma}\label{l=single tau}
If $\Lambda$ satisfies Items (1)--(2) of Definition~\ref{d.transverse} for a given $\tau$ small, then it is s-transverse.
\end{lemma}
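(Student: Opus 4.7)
The strategy is to exploit the contraction of $f^{-1}$ on the strong unstable bundle to produce, at any prescribed smaller scale, paths witnessing s-transversality by pulling back paths at the original scale~$\tau$.

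Fix $x\in \Lambda$ and a target scale $\tau'\in (0,\tau)$. Since $\Lambda$ is $f$-invariant (per the hypotheses of Part~\ref{part1}), the iterate $f^n(x)$ belongs to $\Lambda$ for every $n\geq 0$, and the hypothesis provides paths $\gamma_n,\gamma'_n\colon [0,1]\to \cW^{uu}(f^n(x))$ satisfying Items~(1)--(2) of Definition~\ref{d.transverse} at scale~$\tau$. I would then define $\tilde\gamma:=f^{-n}\circ \gamma_n$ and $\tilde\gamma':=f^{-n}\circ \gamma'_n$, which are paths in $\cW^{uu}(x)$, and verify that for $n$ large enough they witness Items~(1)--(2) at scale~$\tau'$.

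For Item~(1), for each $t$ the points $\gamma_n(t)$ and $\gamma'_n(t)$ lie in the common leaf $\cW^{uu}(f^n(x))$ with $M$-distance less than $\tau$. For $\tau$ smaller than a local-embedding scale for strong unstable leaves, their leafwise distance is bounded by $C\tau$. Backward iteration contracts leafwise distances in $\cW^{uu}$ by a factor of at least $\lambda^{-n}$, where $\lambda>1$ is a lower bound for the unstable expansion of $Df$. Hence $d(\tilde\gamma(t),\tilde\gamma'(t))\leq C\lambda^{-n}\tau < \tau'$ uniformly in $t$, once $n$ is chosen large. Applied at $t=0,1$, the same estimate places $\tilde\gamma'(0)$ and $\tilde\gamma'(1)$ inside $B(\tilde\gamma(0),\tau')$ and $B(\tilde\gamma(1),\tau')$ respectively.

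For Item~(2), the ``different sides'' property is the heart of the matter. The key observation is that $f^{-1}$ preserves the local invariant structures defining the brush: it sends $\cW^s_{loc}(y)$ into $\cW^s(f^{-1}(y))$ and $\cW^{uu}_{loc}(y)$ into $\cW^{uu}(f^{-1}(y))$. Consequently, $f^{-n}$ restricts to a homeomorphism of small neighborhoods sending (a portion of) $Br(f^n(x))$ into $Br(x)$, and hence maps the two local components $U^\pm_\tau(f^n(x))$ into the corresponding components $U^\pm_\tau(x)$. If $\bar\gamma$ is a continuous path provided by Lemma~\ref{l.lift} that witnesses $\gamma'_n(0)$ and $\gamma'_n(1)$ being on opposite sides of $\cW^{uu}(f^n(x))$ relative to $\gamma_n$, then $f^{-n}\circ\bar\gamma$ is a witness of the analogous fact for $\tilde\gamma'(0)$ and $\tilde\gamma'(1)$ relative to $\tilde\gamma$.

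The main technical obstacle is tracking the scale of the ``same side'' relation under pullback: the relation requires the witnessing path to lie in the neighborhood $U_{\varepsilon_0/C_0}$ of the base point, and one must check that $f^{-n}\circ\bar\gamma$ lives inside such a neighborhood of $\tilde\gamma$. This is manageable because backward iteration uniformly shrinks every transverse configuration in the $E^{uu}$-direction while expanding only the stable direction along which the witness $\bar\gamma$ has controlled behavior; for $n$ sufficiently large, the pulled-back witness therefore fits inside the required neighborhood at the target point, completing the verification and producing the desired paths at scale~$\tau'$.
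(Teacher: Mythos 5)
Your proof is based on a geometric misconception that makes the whole argument collapse.

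The critical error is the claim that since $\gamma(t)$ and $\gamma'(t)$ lie in the same leaf $\cW^{uu}(x)$ with ambient distance less than $\tau$, their \emph{leafwise} distance is bounded by $C\tau$ when $\tau$ is small. This is false, and in fact if it were true the lemma would be vacuous. The paths $\gamma$, $\gamma'$ of Definition~\ref{d.transverse} live in the same (typically dense) $\cW^{uu}$ leaf but are very far apart \emph{along} the leaf; the ambient proximity comes from the leaf returning near itself after a long excursion. If $\gamma$ and $\gamma'$ were leafwise close, they would lie in a common local unstable plaque, and such paths can never lie on opposite sides of $\cW^{uu}(x)$ relative to $\gamma$ --- there would be no crossing to speak of. So there is no constant $C$ making the bound true, and there is no scale at which the bound becomes available.

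Once this is understood, the rest of the strategy fails as well. The displacement from $\gamma_n(t)$ to $\gamma'_n(t)$ has negligible $\cW^{uu}$-component (the two points arise from a near-return of the leaf, not from a small step inside it), and decomposes essentially into a stable piece and a center piece, which is precisely why $\rho^c$ and $\rho^s$ are introduced in Lemma~\ref{l.plaque}. Applying $f^{-n}$ to such a configuration \emph{expands} the stable component, so $d(\tilde\gamma(t),\tilde\gamma'(t))$ grows rather than shrinks; backward iteration is exactly the wrong direction. The paper instead iterates \emph{forward} by $f^n$, which contracts $\rho^s$. The price to pay is that the center component $\rho^c$ may grow under forward iteration; the paper deals with this by first replacing $[0,1]$ by a subinterval $[a,b]$ on which the signed center displacement $\psi(t)=\bar d^c(z(t),\gamma(t))$ is already smaller than a preliminary constant $\eta$ but still changes sign at the endpoints (which exists by the intermediate value theorem since $\gamma'(0),\gamma'(1)$ are on opposite sides), and then choosing $\eta$ small enough relative to the number of iterations $n$ needed to bring the stable displacement below $\tau'/3$. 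This pair of moves --- shrink the subinterval so the center is under control, then iterate forward so the stable shrinks --- is the actual mechanism, and neither ingredient appears in your proposal.

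Finally, your treatment of Item~(2) is essentially an assertion; it would need the quantitative control provided by Lemma~\ref{l=alternate 5.4}, Lemma~\ref{l.lift}, and the $\eps_0/C_0$-scale bookkeeping, not a claim that things ``fit inside the required neighborhood for $n$ large.'' But this is secondary: the proof does not survive the first gap.
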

\begin{proof}
Suppose then that $\gamma, \gamma'\subset \cW^{uu}(x)$ satisfy Definition~\ref{d.transverse} at scale $\tau$.  
 Lemma~\ref{l.plaque} implies that these paths can be continuously parametrized so that each point  $\gamma'(t)$ lies in $\cP^{cs}(\gamma(t))$, as in Figure~\ref{f=gammagammaprime}. Denote by $z(t)$ the unique point in $\cP^c(\gamma(t))$ such that $\gamma'(t) \in \widehat\W^s_{\gamma(t)}(z(t))$, and  consider the continuous function $\psi(t):= \bar d^c(z(t),\gamma(t))$, where $\bar d^c$ is the signed distance in  $\cP^c(\gamma(t))$ with respect to some fixed orientation of $E^c$ along $\gamma([0,1])$.

\begin{figure}[h]
\begin{center}
\includegraphics[scale=.22]{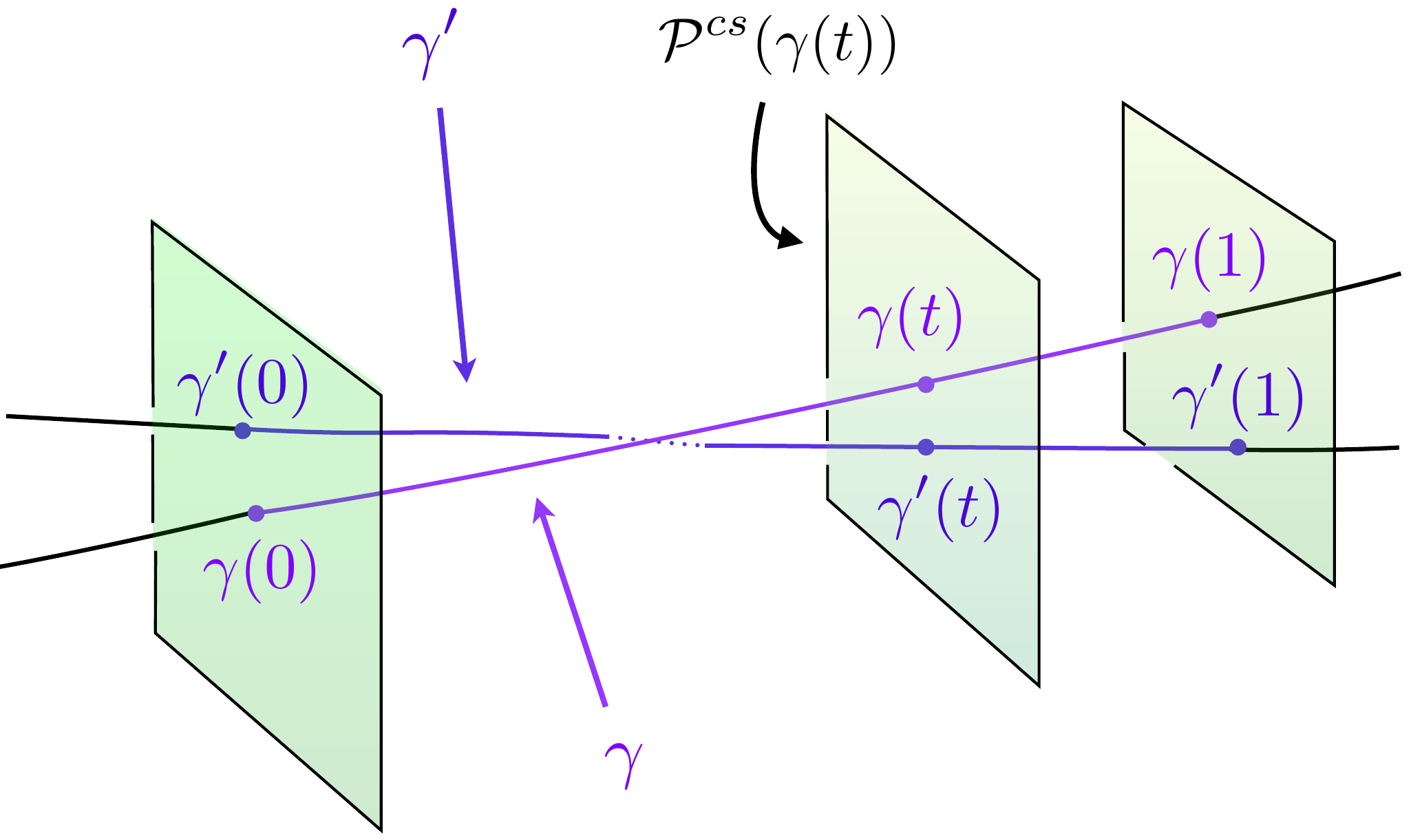}
\caption{Reparametrizing $\gamma'$.}
\label{f=gammagammaprime}
\end{center}
\end{figure}

Given any $\tau' \in (0,\tau)$, we fix $\eta>0$ small.
Since $\gamma'(0)$ and $\gamma'(1)$ lie on different sides of $\W^{uu}(x)$ relative to $\gamma$, for any $\eta>0$ there exists a closed interval $I=[a,b]\subset (0,1)$ such that
$\psi(a),\psi(b)\neq 0$ have opposite signs and $|\psi(t)|<\eta$ on $[a,b]$.
If $\eta$ has been chosen small enough, then  there exists $n\geq 1$ large such that
$\rho^c(f^n(\gamma(t)), f^n(\gamma'(t)))\leq \tau'/3$ for all $t\in [a,b]$,  and since the local stable manifolds are uniformly contracted
$\rho^s(f^n(\gamma(t)), f^n(\gamma'(t)))\leq \tau'/3$. This gives $d(f^n(\gamma(t)), f^n(\gamma'(t)))\leq \tau'$,
whereas $f^n(\gamma'(a))$ and $f^n(\gamma'(b))$ are on different sides of $\cW^{uu}(f^n(x))$ relative to $f^n(\gamma)$.
Hence Items (1)--(2) of Definition~\ref{d.transverse} are satisfied at scale $\tau'$ by arcs $f^n(\gamma),f^n(\gamma')$ at the point $f^n(x)$.
Since the point $x$ is chosen arbitrarily in the invariant set $\Lambda$ and since $n$ can be chosen independently from $x$,
s-transversality holds at scale $\tau'$.
 \end{proof}

We next prove Proposition~\ref{p=structure-minimal} on the structure of dynamically minimal $uu$-laminations.
\begin{proof}[Proof of Proposition~\ref{p=structure-minimal}]
The previous proof shows that if $\Lambda$ is s-transverse, then there is $R>0$ such that for any
$x\in \Lambda$ there exist two different points $y,z\in \cW^{uu}_{R}(x)$ such that
$y\in \cW^{s}_{loc}(z)$. By continuity of the lamination $\cW^{uu}$, if $x,x'$ are close enough, then there exist
$y\in \cW^{uu}_{R}(x)$, $z\in \cW^{uu}_{R}(x)$ such that $y\in \cW^{s}_{loc}(z)$.
Consequently there exists $\eta>0$ such that if $\Omega,\Omega'$ are two uu-sublaminations at distance less than $\eta$ from each other
then they intersect a common local stable leaf.

Fix $N_0\geq \operatorname{Diam}(M)/\eta$. We claim that any (non necessarily invariant) minimal $uu$-lamination $\Omega\subset \Lambda$ is fixed by some iterate $f^N$, $N\geq 1$.
Indeed for any $\ell\geq 1$, two iterates $f^{-i}(\Omega),\dots, f^{-j}(\Omega)$ with $\ell\leq i<j<\ell+N_0$ share a common local stable manifold.
Iterating by $f^j$, we obtain that $\Omega$ and $f^{j-i}(\Omega)$ have two points whose distance  apart  is exponentially small in $\ell$; moreover $1\leq j-i\leq N_0$.
There exists $N\geq 1$ and a sequence $\ell_k\to \infty$ such that the associated iterates satisfy $j_k-i_k=N$.
Passing to the limit, it follows that $\Omega$ and $f^N(\Omega)$ intersect,  and hence coincide, since they are minimal $uu$-laminations.

Let $\Omega$ be a minimal $uu$-sublamination inside $\Lambda$.
We have shown that it is periodic; denote by $N$ its smallest period.
Since the iterates $f^n(\Omega)$ for $n=0,\dots,N_1$ are distinct minimal $uu$-laminations, they are disjoint.
Hence $\Omega\sqcup f(\Omega) \sqcup\cdots \sqcup f^{N-1}(\Omega)$ is an invariant $uu$-lamination.
Since $\Lambda$ is dynamically minimal it coincides with this disjoint union.

Each set $f^n(\Omega)$ is connected since it is the closure of a leaf of $\cW^{uu}$.
Consequently it is a connected component of the disjoint union $\Lambda = \Omega\sqcup f(\Omega) \sqcup\cdots \sqcup f^{N-1}(\Omega)$.
\end{proof}

The next lemma characterizes the integrability of $E^{uu}\oplus E^s$ in the partially hyperbolic setting.

\begin{lemma}\label{l.non-joint-integrability}
Let $f$ be a diffeomorphism with a partially hyperbolic splitting
$TM=E^{uu}\oplus E^c\oplus E^s$, $\dim(E^c)=1$. The following properties are equivalent:
\begin{enumerate}
\item there exists an invariant foliation tangent to $E^{uu}\oplus E^s$;
\item there exists $\eta>0$ such that for any $x\in M$, $y\in Br(x)\cap B(x,\eta)$,
the local unstable manifold $\W^{uu}_\eta(y)$ is contained in $Br(x)$.
\end{enumerate}
\end{lemma}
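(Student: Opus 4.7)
The plan is to treat this as a standard joint-integrability equivalence: the integrability of $E^{uu}\oplus E^s$ is equivalent to the brushes being locally saturated by $\cW^{uu}$, since they are automatically saturated by $\cW^{s}$ from their definition. I would prove the two directions separately, with (2) $\Rightarrow$ (1) carrying essentially all the content.

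For the direction (1) $\Rightarrow$ (2), let $\cF^{us}$ be the invariant foliation tangent to $E^{uu}\oplus E^s$. Since $E^{uu}$ and $E^s$ are sub-bundles of $T\cF^{us}$, uniqueness of local integral manifolds of $E^{uu}$ and $E^s$ forces each leaf of $\cF^{us}$ to be saturated by both $\cW^{uu}_{loc}$ and $\cW^s_{loc}$. In particular $Br(x) \subset \cF^{us}(x)$ and $Br(x)$ contains the product of local $\cW^{uu}_{loc}(x)$ and $\cW^s_{loc}(x)$ inside $\cF^{us}(x)$, hence is an open neighborhood of $x$ in the leaf. By continuity of $\cF^{us}$ and compactness of $M$, I can choose $\eta>0$ uniformly so that any local $\cW^{uu}$-arc of length $\eta$ starting at a point $y\in Br(x)\cap B(x,\eta)$ stays inside this open-in-the-leaf neighborhood, which is exactly $Br(x)$.

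For the direction (2) $\Rightarrow$ (1), I would build the foliation by declaring $x\sim y$ iff the points are connected by a finite chain of jumps along local $\cW^{uu}$- or $\cW^s$-leaves. The equivalence classes are automatically unions of $\cW^{uu}$ and $\cW^s$ leaves, and they form the candidate $us$-leaves. The key local step, using (2), is that near any $x$ the equivalence class coincides with $Br(x)$: by definition $Br(x)$ is $\cW^s$-saturated, and by (2) it is $\cW^{uu}$-saturated, so chains starting at $x$ cannot leave $Br(x)$ within $B(x,\eta)$; conversely $Br(x)$ is already made of such chains (length two: one $\cW^{uu}$ jump then one $\cW^s$ jump). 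Invariance under $f$ is then automatic, since $f$ preserves both $\cW^{uu}$ and $\cW^s$, hence the chain relation.

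The main obstacle is upgrading the resulting partition into an honest foliation whose leaves are $C^1$ submanifolds tangent to $E^{uu}\oplus E^s$. Each leaf locally equals $Br(x)$, which the footnote tells us is a locally flat topological codimension-$1$ submanifold. To get $C^1$ regularity, I would parametrize $Br(x)$ by the map $(u,s)\mapsto \cW^s_{loc}(\cW^{uu}_{loc}(x)(u))(s)$, which is injective by transversality of $E^{uu}$ and $E^s$ and is continuous; condition (2) guarantees, via Lemma~\ref{l=alternate 5.4} together with the local $\cW^{uu}$-saturation, that this parametrization is a homeomorphism onto $Br(x)$. At any point $y\in Br(x)$, saturation by the $C^{1+}$ foliations $\cW^{uu}$ and $\cW^s$ gives two $C^1$ submanifolds through $y$ inside $Br(x)$ whose tangent spaces span the continuous plane $E^{uu}(y)\oplus E^s(y)$. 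Since $Br(x)$ is a topological codimension-$1$ submanifold whose tangent plane is already forced to equal this continuous distribution at every point, standard arguments promote $Br(x)$ to a $C^1$ submanifold tangent to $E^{uu}\oplus E^s$, completing the construction of the invariant foliation.
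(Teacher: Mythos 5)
Your overall plan matches the paper's: show that condition~(2) forces each brush $Br(x)$ to be a $C^1$ codimension-one submanifold tangent to $E^{uu}\oplus E^s$, and then assemble these into an invariant foliation. The direction (1)$\Rightarrow$(2) is fine (the paper dismisses it as clear), and your chain-of-jumps construction of the foliation is a legitimate substitute for the paper's appeal to \cite{Bonatti-Wilkinson}; your observation that the local equivalence class of $x$ coincides with $Br(x)$ correctly uses condition~(2) via the two-sided saturation of the brush.

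The genuine gap is in the regularity step. You assert that since $Br(x)$ is a topological codimension-one submanifold and at each $y\in Br(x)$ the $C^1$ discs $\cW^{uu}_{loc}(y)$ and $\cW^{s}_{loc}(y)$ lie inside $Br(x)$ and span the continuous plane $E^{uu}(y)\oplus E^s(y)$, ``standard arguments'' upgrade $Br(x)$ to $C^1$. That implication is not standard and, as stated, is not true: a topological submanifold has no a priori tangent plane, and even for a Lipschitz graph the existence of directional derivatives in a spanning family does not yield differentiability (consider $g(x,y)=x^2y/(x^2+y^2)$, $g(0,0)=0$, which is Lipschitz with partial derivatives everywhere but is not differentiable at the origin). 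What closes the gap --- and what the paper supplies --- is an explicit two-part argument: first that $Br(x)$ is a \emph{graph} over a horizontal transverse to $E^c$, and then that between any two nearby points of $Br(x)$ one can interpolate by a short path tangent to $E^{uu}\oplus E^s$, so that uniform continuity of this plane field forces the graph to be differentiable with derivative given by the plane field, hence $C^1$. Your proposal also passes over the first part: the injectivity of the horizontal projection of $Br(x)$ is precisely where condition~(2) enters in a nontrivial way (one shows that for $y\in Br(x_0)$ close to $x_0$ one also has $Br(x_0)\cap B(x_0,\eps)\subset Br(y)$, so any $z\in Br(x_0)$ with the same horizontal projection as $y$ lies in $Br(y)$, whence $z=y$ by the cone conditions). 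Your parametrization $(u,s)\mapsto \cW^s_{loc}(\cW^{uu}_{loc}(x)(u))(s)$ is a homeomorphism onto $Br(x)$ for any brush, with or without~(2) --- a continuous bijection from a compact domain --- so it does not record the graph property, and Lemma~\ref{l=alternate 5.4} (which controls the noncoherence of the $\cP^{cs}$ plaque family) is not the tool that supplies it. With the graph property and the interpolation-by-tangent-paths argument made explicit, your chain construction would give a correct alternative proof.
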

\begin{proof}
(1)$\Rightarrow$(2) is clear. In the following we assume (2) and prove (1).

Fix a point $x_0$ and consider a chart  in which the vertical is close to $E^c$ and transverse to $E^{uu}\oplus E^s$.
In such a  chart, each stable and unstable manifold is a graph over the horizontal plane.

By assumption, for any $x$ and any $y \in Br(x)\cap B(x,\eta)$ we have $\W^{uu}_\eps(y)\subset Br(x)$, and hence $Br(y)\cap B(y,\eps)\subset Br(x)$.
Symmetrically,  $Br(x)\cap B(x,\eps)\subset Br(y)$.

We claim that if $y,z$ in $Br(x_0)$ are close to $x_0$ and project to the same point in the  horizontal plane, then $y=z$.
Indeed, since $z$ belongs to $Br(x_0)\cap B(x_0,\eta)$, it must be  is contained in $Br(y)$,
and so it belongs to a plaque $\W^s_{loc}(t)$ with $t\in\W^{uu}_{loc}(y)$.
Since $\W^s$, $\W^{uu}$ and the vertical are tangent to transverse cones,
the projection of $z$ to the horizontal belongs to the projection of $\W^{uu}_{loc}(y)$ only
if $z$ itself belongs to $\W^{uu}_{loc}(y)$. This implies the claim.

We have proved that $Br(x_0)\cap B(x_0,\eta)$ is a graph over the horizontal.
By invariance of  domain,  its projection to the horizontal is open.
Any points $z,y\in Br(x_0)\cap B(x_0,\eta)$ close to each other can be connected by a small path tangent to $E^{uu}\oplus E^s$.
By uniform continuity of $E^{uu}\oplus E^s$, this shows that the graph $Br(x_0)\cap B(x_0,\eta)$ is $C^1$ and tangent to $E^{uu}\oplus E^s$.

By Proposition 1.6 and Remark 1.10 in~\cite{Bonatti-Wilkinson},
this implies that there exists a continuous foliation with $C^1$-leaves tangent to $E^{uu}\oplus E^s$, proving (1).
\end{proof}

Finally we turn to the proof of Proposition~\ref{p=stransversecriterion}, a criterion for checking s-transversality.

\begin{proof}[Proof of Proposition~\ref{p=stransversecriterion}]
Fix  $\tau>0$ small.
By Lemma~\ref{l.non-joint-integrability} if $E^{uu}\oplus E^s$ is not integrable, then for any $0<\eta\ll \tau$
there are points $x$, $y\in Br(x)\cap B(x,\eta)$ such that
$\cW^{uu}(y)\cap B(x,\eta)$ is not contained in $Br(x)$. 
Thus there exists an arc  $\gamma'_0\subset \cW^{uu}_{2\eta}(y)$ connecting $y$ to some point
$z\in U_\eps(x)$.

We claim that close to $y$ there exists a point $y'$ belonging to the  component of $U_\eps(x)$ that does not contain $z$.
This can be proved by examining the end of the proof of Lemma~\ref{l=twocomponents}:
 consider again the disc $D$ through $x$ of dimension $\dim(E^{uu})+1$.  
 The local manifold  $\cW^{s}_{loc}(y)$ meets $D$ at a unique point, which belongs to $\cW^{uu}_{loc}(x)$.
Since the projection on $D$ by holonomy along $\cW^s_{loc}$-leaves is open on a neighborhood of $y$,
there exists $y'$ close to $y$ such that $\cW^{s}_{loc}(y)$ meets $D$ at a point belonging to any given component $U^\pm_\eps(x)$.
Let  $\gamma'\subset \cW^{uu}_{loc}(y')$ be an arc connecting  $y'$ to $z$ of  length smaller than $2\eta$ that is 
 $\tau$-close to $x$.  Define  $\gamma$ to be the arc supported on $\{x\}$.
Definition~\ref{d.transverse} is now satisfied at scale $\tau$. Since $\gamma,\gamma'$ have diameter smaller than $\tau$,
the foliation $\W^{uu}$ is locally s-transverse.
\end{proof}

\section{Preparation for the proof of  Theorem~\ref{t=s-transPartHyp}}\label{s.proof-minimality}
This section and the next are devoted to the  proof of Theorem~\ref{t=s-transPartHyp}. 
We thus fix a $C^{1+}$ diffeomorphism $f\colon M\to M$ and an $f$-invariant  $uu$-lamination $\Lambda\subset M$ that is $s$-transverse
and satisfies the property SH. We aim to show that  $\Lambda$ contains a $cu$-disk.  We continue to use the notation introduced in \S~\ref{s=notations}-\ref{s=simplified proof}-\ref{s.preliminaries-part2}.
In this section, we introduce the necessary tools to address the general case where $\Lambda$ is not necessarily equal to $M$ and $E^c$ is not necessarily expanding.

\subsection{Plaque holonomies and distortion}
\subsubsection{Signed center distance} In Section~\ref{s=simplified proof}, we introduced signed versions $\bar d^c$ and $\bar \rho^c$ of $d^c$ and $\rho^c$ assuming that the center bundle is orientable. When it is not orientable, signed distances can be introduced locally and can be extended continuously in the neighborhood of a simple arc.
There are two choices for the sign locally, but the ratio of two signed distances does not depend on this choice and is is thus well-defined.

\subsubsection{Basic distortion}\label{ss.distortion}
Item (5) of Proposition~\ref{p.plaque} implies that the tangent bundles of the center plaques $\cP^c(z)$ are $\theta$-H\"older continuous, for some $\theta\in(0,1)$. Let $C_\theta>0$ be a uniform bound on their H\"older constants.
For $x\in \Lambda$, $\eps>0$, $k\geq 0$, let 
\[ c_k(w,\eps) := \exp\left(C_\theta \sum_{i=0}^{k-1} \min \{\eps_0,  \text{\rm Length}(f^i(\P^c(w,\eps)))\}^\theta  \right).\]
We have the standard distortion estimates.
\begin{lemma}\label{l.basicdistort}
If the lengths of the first $k$ iterates of $\cP^c(x,\eps)$ are less than $\eps_0$,  then for all $y,z\in\P^c(x,\eps)$, we have
\[{\|D_yf^k|_{\cP^c(x,\eps)}\|}\asymp_{{c_k}(x,\eps)}{\|D_zf^k|_{\cP^c(x,\eps)}\|}.\]
\end{lemma}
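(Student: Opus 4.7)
The plan is to follow the standard telescoping argument that underlies all distortion estimates for $C^{1+\theta}$ dynamics on one-dimensional invariant curves. First I would write
\[
\log\frac{\|D_yf^k|_{\cP^c(x,\eps)}\|}{\|D_zf^k|_{\cP^c(x,\eps)}\|}
=\sum_{i=0}^{k-1}\Bigl(\log\|D_{f^i(y)}f|_{T_{f^i(y)}\cP^c}\|
-\log\|D_{f^i(z)}f|_{T_{f^i(z)}\cP^c}\|\Bigr),
\]
using the chain rule together with local invariance of the plaque family (Proposition~\ref{p.plaque}(2)), which guarantees that $f^i(y),f^i(z)$ lie in the image plaque $f^i(\cP^c(x,\eps))\subset\cP^c(f^i(x))$ and that the derivative $Df^k$ along $\cP^c(x,\eps)$ factors as a product of one-dimensional factors along the iterated plaques.

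Next I would estimate each summand using the $\theta$-H\"older regularity of the function $w\mapsto\log\|D_wf|_{T_w\cP^c(f^i(x))}\|$. This regularity follows from two inputs: (i) $f$ is $C^{1+}$, so $Df$ is $\theta$-H\"older (after possibly shrinking $\theta$); (ii) by Proposition~\ref{p.plaque}(5), the tangent bundle $w\mapsto T_w\cP^c(f^i(x))$ is uniformly $\theta$-H\"older. Composing these regularities (and using a uniform lower bound on $\|Df|_{E^c}\|$ to pass to the logarithm) yields the pointwise estimate
\[
\bigl|\log\|D_{f^i(y)}f|_{T\cP^c}\|-\log\|D_{f^i(z)}f|_{T\cP^c}\|\bigr|
\leq C_\theta\, d(f^i(y),f^i(z))^\theta,
\]
where I absorb all of the relevant constants into the same $C_\theta$ that appears in the statement (this is just a choice of the constant in Proposition~\ref{p.plaque}(5)).

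To finish, I would bound $d(f^i(y),f^i(z))$ by the intrinsic length of the $i$-th iterate of the center plaque. Since $y,z\in\cP^c(x,\eps)$ and the first $k$ iterates of this plaque have length at most $\eps_0$ by hypothesis, the image $f^i(\cP^c(x,\eps))$ is a connected arc in $\cP^c(f^i(x))$ containing both $f^i(y)$ and $f^i(z)$, hence
\[
d(f^i(y),f^i(z))\leq\operatorname{Length}(f^i(\cP^c(x,\eps)))
=\min\{\eps_0,\operatorname{Length}(f^i(\cP^c(x,\eps)))\}.
\]
Summing these bounds over $i=0,\dots,k-1$ gives
\[
\Bigl|\log\tfrac{\|D_yf^k|_{\cP^c}\|}{\|D_zf^k|_{\cP^c}\|}\Bigr|
\leq C_\theta\sum_{i=0}^{k-1}\min\{\eps_0,\operatorname{Length}(f^i(\cP^c(x,\eps)))\}^\theta
=\log c_k(x,\eps),
\]
which is exactly the asserted estimate.

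There is no real obstacle here: the only point that requires any care is verifying that the composite H\"older constant of $\log\|Df|_{T\cP^c}\|$ along plaques can indeed be taken to equal the constant $C_\theta$ fixed just before the lemma, but this is a matter of bookkeeping, and choosing $\theta\in(0,\alpha)$ small enough ensures both the plaque regularity of Proposition~\ref{p.plaque}(5) and the H\"older continuity of $Df$ hold simultaneously with a uniform constant.
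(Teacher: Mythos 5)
Your proposal is correct and is exactly the standard telescoping distortion argument that the paper is tacitly invoking when it labels this ``the standard distortion estimates'' and states the lemma without proof. You correctly identify the one small imprecision in the paper's setup, namely that the constant $C_\theta$ is introduced only as a H\"older bound for the tangent plaque bundles from Proposition~\ref{p.plaque}(5), whereas the lemma requires it to control the H\"older modulus of $w\mapsto\log\|D_wf|_{T_w\cP^c}\|$, which also absorbs the H\"older constant of $Df$ and a lower bound on $\|Df|_{E^c}\|$; as you say, this is a harmless bookkeeping matter resolved by taking $C_\theta$ large enough (and $\theta$ small enough) once and for all.
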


\begin{corollary}\label{c.distortion} 
Let $K\subset \Lambda$ be a hyperbolic set such that $E^c|_K$ is uniformly expanded.
For any $\Delta>1$, there is $\eps_1 = \eps_1(\Delta)>0$ such that for any $z\in K$, $x_0,x_1,x_2\in \P^c(z)$ and any $k\geq 1$,
if $d(f^i(z),f^i(x_j))<\eps_1$ for all $j$ and $0\leq i\leq k$,  then
\[\frac{ \bar d^c_z(x_1,x_0)}{\bar d^c_z(x_2,x_0)}\asymp_\Delta \frac{\bar d^{c}_{f^k(z)}(f^k(x_1),f^k(x_0))}{\bar d^c_{f^k(z)}(f^k(x_2),f^k(x_0))}.\]
\end{corollary}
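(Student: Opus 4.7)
The plan is to run the standard bounded-distortion argument along the center plaque $\cP^c(z)$ using Lemma~\ref{l.basicdistort}, and to keep the distortion constant $c_k(z,\eps)$ arbitrarily close to $1$ by exploiting the backward exponential contraction of $E^c$ coming from the hyperbolicity of $K$.

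Fix $\Delta>1$. Since $E^c|_K$ is uniformly expanded and $K$ is compact, one can choose $\mu>1$ and a neighborhood $V\supset K$ on which $\mu^-_f(\cdot)>\mu$ (using the functions from \S\ref{ss.partial-hyperbolicity}). Take $\eps_1$ smaller than the scale $\eps$ of Proposition~\ref{p.plaque}, and small enough that $B(K,\eps_1)\subset V$ and that the intrinsic center distance $d^c$ is comparable to the ambient distance $d$ (up to a uniform factor $C$) on plaques of diameter at most $2\eps_1$. Under the hypothesis $d(f^i(z),f^i(x_j))<\eps_1$ for $0\leq i\leq k$, Proposition~\ref{p.plaque}(2) keeps each $f^i(x_j)$ inside $\cP^c(f^i(z))$, and the forward expansion bound in~(3) applied from step $i$ to step $k$ yields
\[
\mu^{k-i}\,d(f^i(z),f^i(x_j))\leq d(f^k(z),f^k(x_j))<\eps_1.
\]
Setting $\eps:=\max_j d^c_z(z,x_j)$ and using the comparability of $d$ and $d^c$, the same estimate bounds $\operatorname{Length}(f^i(\cP^c(z,\eps)))\leq C\mu^{-(k-i)}\eps_1$, so
\[
\sum_{i=0}^{k-1}\min\{\eps_0,\operatorname{Length}(f^i(\cP^c(z,\eps)))\}^{\theta}
\leq \frac{C^\theta \eps_1^\theta}{1-\mu^{-\theta}}.
\]
Shrinking $\eps_1$ one last time, I may arrange $c_k(z,\eps)^2\leq \Delta$.

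With this control in hand, Lemma~\ref{l.basicdistort} yields $\|D_yf^k|_{\cP^c}\|\asymp_{c_k(z,\eps)}\|D_wf^k|_{\cP^c}\|$ for every $y,w\in\cP^c(z,\eps)$. Parametrizing by arclength the center subarc from $x_0$ to $x_j$ and integrating, one obtains
\[
|\bar d^c_{f^k(z)}(f^k(x_j),f^k(x_0))|\asymp_{c_k(z,\eps)}\|D_{x_0}f^k|_{\cP^c}\|\cdot|\bar d^c_z(x_j,x_0)|,\qquad j=1,2.
\]
Dividing these two comparisons cancels the common factor $\|D_{x_0}f^k|_{\cP^c}\|$ and gives the announced estimate in absolute value with constant $c_k(z,\eps)^2\leq \Delta$. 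The signed version follows because $f^k$ restricts to a monotonic $C^1$ diffeomorphism of the connected arc $\cP^c(z,\eps)$ onto its image, so the signs of $\bar d^c_{f^k(z)}(f^k(x_j),f^k(x_0))$ and $\bar d^c_z(x_j,x_0)$ differ by a common factor in $\{\pm 1\}$ that cancels in the ratio.

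The only subtlety lies in checking that the expansion lower bound and the local invariance of Proposition~\ref{p.plaque} really apply all along the orbit, which is why $V$ is chosen first and $\eps_1$ only afterwards; a minor point is also the comparability between $d$ and $d^c$ at small scales, which holds by the $\theta$-H\"older continuity of the tangent bundle to the plaques. Beyond this bookkeeping, no real obstacle is expected: the estimate is a routine H\"older bounded distortion argument whose only new input is that, for hyperbolic pieces with expanding center, orbits satisfying the hypothesis are forced to be contracted backward along $E^c$ exponentially in $k-i$, which makes the H\"older sum summable at a rate independent of $k$.
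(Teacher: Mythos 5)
Your proof is correct and fills in exactly the argument the paper leaves implicit (the paper states the corollary immediately after Lemma~\ref{l.basicdistort} with no further proof). The key point — that the hypothesis forces the $f^i$-images of the center arc to have length $\lesssim\mu^{-(k-i)}\eps_1$ because $E^c$ is uniformly expanded near $K$, making the H\"older distortion sum geometrically summable independently of $k$ — is precisely what makes the standard bounded-distortion computation go through, and you set it up cleanly (choosing the neighborhood $V$ before $\eps_1$, checking that the local-invariance and expansion bounds of Proposition~\ref{p.plaque} apply along the whole orbit, and handling the signed/monotonicity point at the end).
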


\subsubsection{Basic distance controls}
We will prove a sequence of lemmas controlling the  distortion  $c_k$ for very large $k$.  This will  require as a preliminary finding a scale at which the bundles in the partially hyperbolic splitting are nearly constant.  The next lemma addresses the choice of this scale.

\begin{lemma}\label{l.def-length}
For any $\delta,\eta>0$, there exists $\eps_2 = \eps_2(\delta,\eta)>0$ such that  for any points $z,z'\in \Lambda$ with $z'\in \W^{uu}_{loc}(z,\eps_2)$,  the following holds.

\begin{enumerate}
\item For any $k\geq 0$, if  $d^{uu}(f^i(z), f^{i}(z'))<\eps_2$ for $i=0,\ldots, k-1$, then \[d^{uu}(f^k(z), f^k(z'))\geq (1+\delta) \lambda_k(z) d^{uu}(z,z').\]

\item Consider any $\eps\in (0,\eps_2)$, and $x,y\in \P^c(z,\eps)$, $x',y'\in \P^c(z')$ such that
there are points $x''\in \hW_z^s(x,\eps_2)\cap \Lambda$, $y''\in \hW_{z}^s(y,\eps_2)\cap \Lambda$,
$x'''\in \hW_{z'}^s(x',\eps_2)\cap \W^{uu}_{loc}(x'')$, and $y'''\in \hW_{z'}^s(y',\eps_2)\cap \W^{uu}_{loc}(y'')$ (see Figure~\ref{f=deltascalecomp}).
If $d^c_{z'}(x',y') \geq \eta \eps$
and $\eps>\max\{d^s_{x''}(x'',x), d^{uu}(z,z')\}$, then:
 $$d^c_z(x,y)\asymp_{1+\delta} d^c_{z'}(x',y').$$

 \end{enumerate}
\end{lemma}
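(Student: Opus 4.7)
For Part (1), the plan is to exploit the strict inequality $\lambda(x) < \inf_{v \in E^u(x), \|v\|=1} \|D_xg(v)\|$ built into the definition of quantitative partial hyperbolicity in \S\ref{ss.partial-hyperbolicity}. By continuity of both sides and compactness of $\Lambda$, I may (and will) choose $\lambda$ with a uniform slack $\delta_0 > \delta$ so that $\|Df|_{E^u(x)}\| \geq (1+\delta_0)\lambda(x)$ on a neighborhood of $\Lambda$. Then I would pick $\eps_2$ small enough that on that scale $E^u$ is nearly constant in exponential charts and $f$ is uniformly close to its derivative, which forces $d^{uu}(f(z), f(z')) \geq (1+\delta)\lambda(z)\, d^{uu}(z, z')$ in one step once the chart error is absorbed by $\delta_0 - \delta$. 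Iterating this one-step bound $k$ times (the hypothesis keeps the orbits within $\eps_2$) and using the $C^{1+}$-distortion bound of Lemma~\ref{l.basicdistort} then gives the desired factor $(1+\delta)\lambda_k(z)$ for all $k \geq 1$.

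For Part (2), the idea is to interpret the loop $x \to x'' \to x''' \to x'$ (stable, then strong unstable, then stable) as a near-isometric transport of center coordinate from $\cP^c(z)$ to $\cP^c(z')$. I would choose $\eps_2$ so that in the exponential charts at $z, z'$ the bundles $E^{uu}, E^c, E^s$ are nearly constant and the plaques $\cP^c, \cP^{cs}$ together with the local leaves $\hW^s$ are affine up to tolerance much smaller than $\delta$. Each leg of the loop shifts the effective center coordinate of a point by at most a small multiple of the corresponding stable or unstable displacement: the stable-plaque leg from $x$ to $x''$ costs at most a small fraction of $d^s_{x''}(x'', x)$, the $\W^{uu}$-leg from $x''$ to $x'''$ costs at most a small fraction of $d^{uu}(z, z')$, and the closing stable leg contributes another error of the same size. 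Both displacements are $\leq \eps$ by hypothesis, so the total distortion of the center coordinate is a small fraction of $\eps$; since $d^c_{z'}(x', y') \geq \eta \eps$, the error is dominated by the center distance itself, yielding $d^c_z(x, y) \asymp_{1+\delta} d^c_{z'}(x', y')$ once $\eps_2$ is chosen small in terms of $\eta$ and $\delta$.

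The hard part will be the quantitative bookkeeping in Part (2): strong stable holonomies are only H\"older in general, so one cannot simply ``project along $E^s$''. The plan is to work throughout with the plaque-family structure from Proposition~\ref{p.plaque}, using local invariance and the one-dimensionality of $E^c$ to reduce every center comparison to one of signed scalar lengths along $\cP^c$. The smoothness of the $\W^{uu}$-holonomy between center-stable plaques, which is a consequence of center-bunching in the $\dim E^c = 1$ setting (cf.~the preparation in \S\ref{s=simplified proof}), is what guarantees that the middle $\W^{uu}$-leg of the loop introduces no uncontrolled distortion in the center direction.
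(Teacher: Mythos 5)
Your proposal follows essentially the same route as the paper: both parts are single-scale estimates carried out in exponential charts where the invariant bundles are close to constant. For Part~(1) the paper's proof is precisely the ``slack plus small-scale approximation'' argument you describe (the strict inequality $\lambda(x)<\inf_{\|v\|=1,\,v\in E^{uu}(x)}\|D_xf(v)\|$ gives a uniform gap by compactness, which absorbs the chart error and iterates multiplicatively); note however that you do not need Lemma~\ref{l.basicdistort} here --- that lemma is a distortion bound for the \emph{center} plaques $\P^c$, not for $\W^{uu}$-lengths, and Part~(1) requires no distortion estimate at all. For Part~(2) the paper likewise works in a chart where $E^c$ is close to vertical and the legs of the suus-path are close to $E^s$, $E^{uu}$, $E^s$; since each leg has length $\lesssim\eps$, and since the near-constancy of the bundles makes the vertical displacement along each leg a small multiple of its length, the total change in center coordinate is a small multiple of $\eps$, which is dwarfed by the lower bound $\eta\eps$ on $d^c_{z'}(x',y')$.

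The one point I would push back on is your closing paragraph. You invoke the $C^1$-regularity of $\W^{uu}$-holonomy (via center-bunching) as the mechanism controlling the middle leg, but this is neither what the paper does nor needed for this lemma: the entire configuration lives at a single small scale $\eps$, so uniform continuity of the bundles already makes each leg nearly flat, and there is no holonomy map to compose or iterate. The $C^1$-holonomy/center-bunching machinery is the right tool for the \emph{next} lemma (Lemma~\ref{l.holonomy}), where the $uu$-arc has a fixed macroscopic length $R$ and one must push the comparison backwards and forwards through arbitrarily many iterates; importing it here would be correct but is overkill and slightly misidentifies why the present lemma is elementary.
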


\begin{figure}[h]
\begin{center}
\includegraphics[scale=.22]{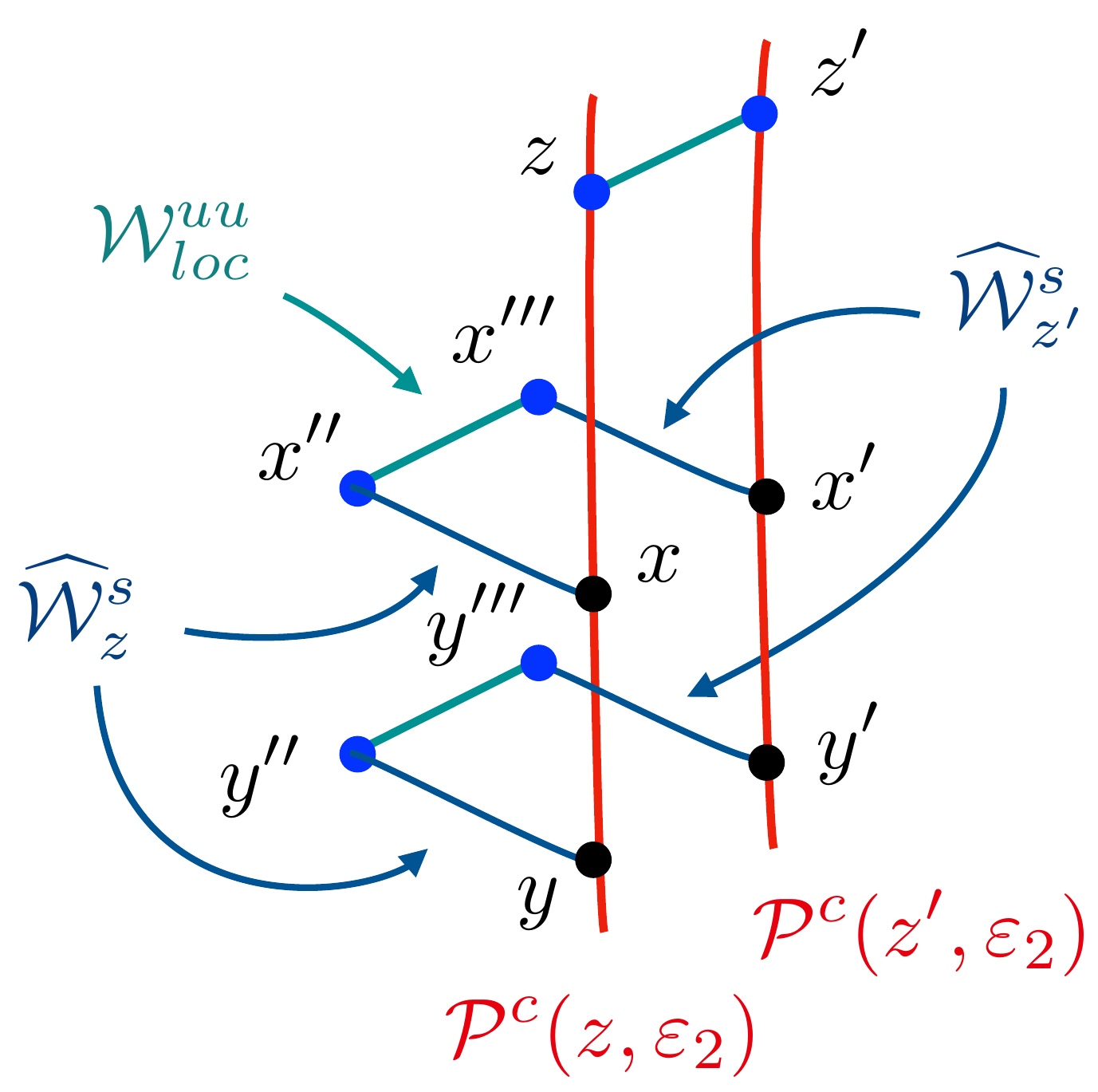}
\caption{Arrangement of points in Lemma~\ref{l.def-length}. Points in $\Lambda$ are colored blue.}
\label{f=deltascalecomp}
\end{center}
\end{figure}

\begin{proof}
Part (1) uses an elementary hyperbolic distortion estimate.
At small scale the bundles are almost constant.
In particular, one may consider local coordinates such that the center bundle is
arbitrarily close to the vertical direction.
By our assumptions, all stable and unstable distances are controlled
from above by the center distance $d^c_z(x,y)$.
This implies that moving $x$ to $x'$ along the ``suus-path'' through $x''$ and $x'''$, the difference of the vertical coordinates of $x$ and $x'$ is small with respect to
$d^c_z(x,y)$. The same property holds when one moves $y$ to $y'$.
Consequently the ratio between
$d^c_z(x,y)$ and $d^c_{z'}(x',y')$ is arbitrarily close to $1$ if the scale is chosen small enough.
\end{proof}

\subsubsection{$\W^{uu}$-holonomy between center plaques}\label{ss.uu-holonomy}
We need to introduce a type of $\W^{uu}$-holonomy between center plaques centered at points inside a common $\W^{uu}$-leaf.
Since the system is not dynamically coherent, we have to add a projection along a (fake) stable foliation, as in Figure~\ref{f=Lipholonomy}.

Fixing $R>0$,  if $\eps>0$ is sufficiently small, then for each 
 $x\in \Lambda$ and $x'\in \W^{uu}(x,R)$, we define as follows the $\W^{uu}$-holonomy
\[h^{uu}\colon \P^c(x, \eps)\cap\Lambda  \to \P^c(x').\]
First, there exists $\delta>0$ such that for $\eps>0$ sufficiently small and for any  $x\in \Lambda$,  $x'\in \W^{uu}(x,R)$, and $y\in  \P^c(x,\eps)\cap\Lambda$,  the intersection  $D'(y):=\W^{uu}(y,R)\cap B(x',\delta)$ is  connected and nonempty.  We fix any $x''(y)\in D'(y)$ and define $y''$ to be the unique point of intersection between $\W^{uu}_{loc}(x''(y))$ and   $\P^{cs}(x')$.
Then $h^{uu}(y)$ is the unique point in $\cP^c(x')$ such that
$\W^{ss}_{loc}(h^{uu}(y))$ contains $y''$. This does not depend on the choice of $x''(y)$.
 
It is well-known that for a dynamically coherent, partially hyperbolic diffeomorphism with $1$-dimensional center, unstable holonomies are Lipschitz  between center manifolds. The following is a refined  analogue of this statement in the context of $\W^{uu}$-laminated partially hyperbolic sets, expressing as well  the fact that the Lipschitz norm of the holonomy is nearly constant on small scales.

\begin{figure}[h]
\begin{center}
\includegraphics[scale=.23]{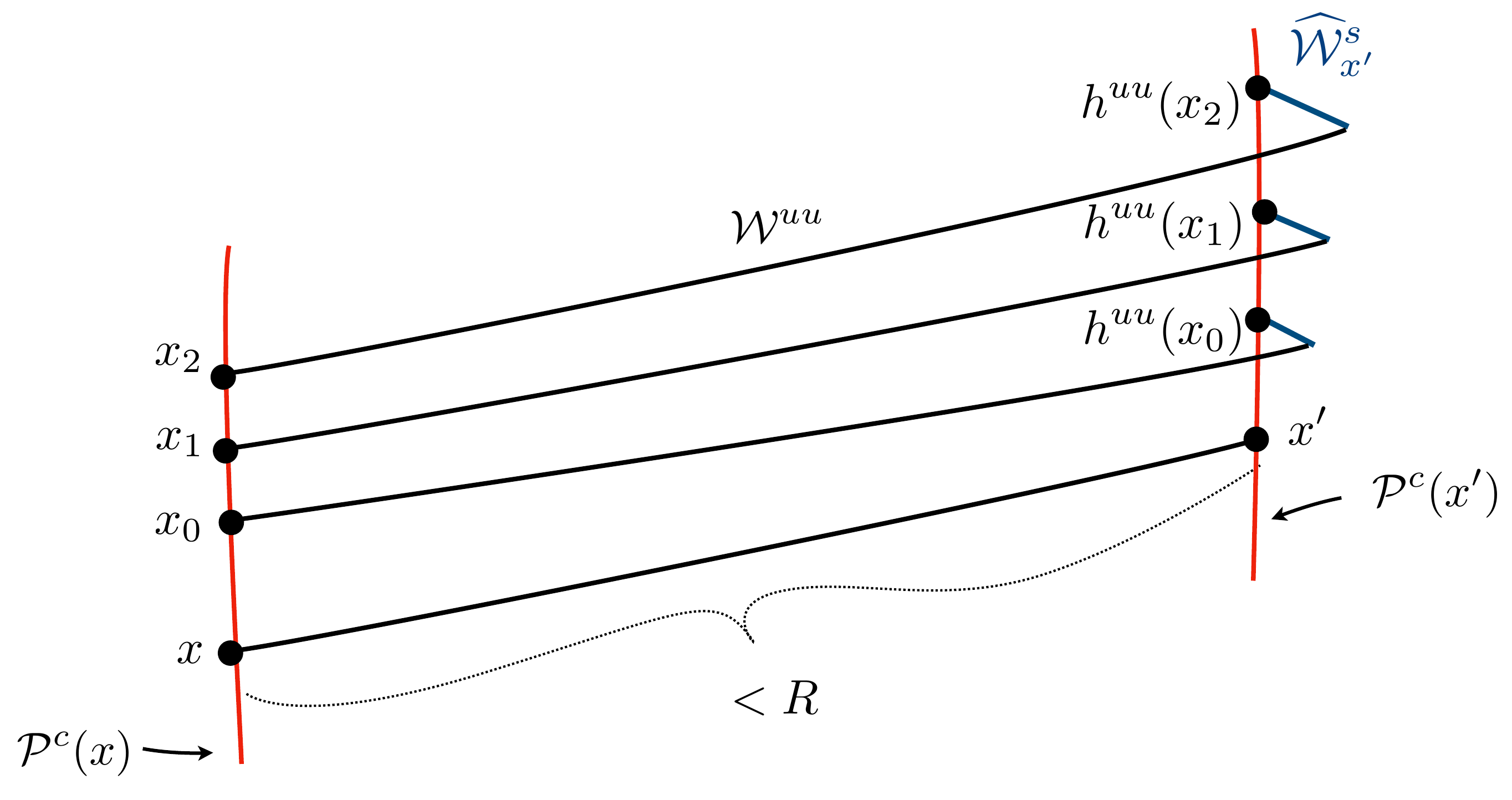}
\caption{$\W^{uu}$ holonomy is Lipschitz on suitable scales.}
\label{f=Lipholonomy}
\end{center}
\end{figure}

\begin{lemma}\label{l.holonomy}
Given any $R,\eta>0$ and $L>1$ there exists $\eps_3 = \eps_3({R,\eta,L})$
such that for any $\eps\in (0, \eps_3)$, any $x\in\Lambda $ and any $x'\in \W^{uu}(x,R)$, the holonomy  $h^{uu}\colon \P^c(x, \eps)\cap\Lambda  \to \P^c(x')$ has the following properties:
\begin{enumerate}
\item For any $x_0\in \P^c(x, \eps) \cap\Lambda $, if $x_0\neq x$ then $h^{uu}(x_0)\neq x'$.
\item For any $x_0, x_1, x_2 \in  \P^c(x, \eps) \cap\Lambda $,  if $d^c_{x}(x_i, x_j)>\eta\eps$ for $i\neq j$, then
\[ \frac{\bar d^c_{x} (x_1,x_0)}{\bar d^c_{x}  (x_2,x_0)} \asymp_{L}  \frac{\bar d^c_{x'} (h^{uu}(x_1),h^{uu}(x_0))}{\bar d^c_{x'}  (h^{uu}(x_2),h^{uu}(x_0))}.\]
\end{enumerate}
\end{lemma}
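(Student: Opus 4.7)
The plan is to reduce both statements to a configuration at a small scale, where the partially hyperbolic bundles are nearly constant and Lemma~\ref{l.def-length}(2) applies, and then to transport the conclusions back to the original scale using the distortion estimate of Lemma~\ref{l.basicdistort}. The key reduction uses backward iteration by $f^{-n}$: by the uniform contraction of $E^{uu}$ under $Df^{-1}$, for $n=n(R,\eps_2)$ large enough, any pair $(x,x')$ with $x'\in\W^{uu}(x,R)$ satisfies $d^{uu}(f^{-n}(x),f^{-n}(x'))<\eps_2/4$. The local invariance of $\P^c$, $\P^{cs}$ and $\widehat\W^s$ in Proposition~\ref{p.plaque}(2) then shows that the holonomy commutes with $f$, in the sense that $h^{uu}_x=f^n\circ h^{uu}_{f^{-n}(x)}\circ f^{-n}$ on $\P^c(x,\eps)\cap\Lambda$.

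With $\delta>0$ chosen so that $(1+\delta)^4<L$ and $\eps_2:=\eps_2(\delta,\eta/4)$, I take $\eps_3>0$ small enough that, for $\eps\in(0,\eps_3)$, the $n$ backward iterates of $\P^c(x,\eps)$ stay inside $\P^c(f^{-j}(x),\eps_2/4)$ (using the lower bound $\mu^->0$ on $\|Df|_{E^c}\|$) and the forward distortion factor $c_n(f^{-n}(x),\eps_2)$ from Lemma~\ref{l.basicdistort} is at most $1+\delta$. For Part~(2), given $x_0,x_1,x_2\in\P^c(x,\eps)\cap\Lambda$ with pairwise $d^c_x$-distances exceeding $\eta\eps$, Lemma~\ref{l.basicdistort} applied to the backward iterates shows that the $f^{-n}(x_j)$ are pairwise at $d^c_{f^{-n}(x)}$-distance at least of order $(\eta/2)\eps_2$. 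Applying Lemma~\ref{l.def-length}(2) to each pair at the base points $f^{-n}(x),f^{-n}(x')$, and then iterating forward with Lemma~\ref{l.basicdistort} on each side, yields $d^c_x(x_i,x_j)\asymp_{(1+\delta)^3} d^c_{x'}(h^{uu}(x_i),h^{uu}(x_j))$; taking ratios of signed distances (with the orientation chosen continuously along the path from $x$ to $x'$ in $\W^{uu}$) gives the required $L$-comparison, since $(1+\delta)^4<L$.

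For Part~(1), the commutation identity reduces the claim to showing that $h^{uu}_{f^{-n}(x)}$ is injective on $\P^c(f^{-n}(x),\eps_2/4)\cap\Lambda$ at $f^{-n}(x')$. In an exponential chart at $f^{-n}(x)$, the bundles are nearly constant, so the suus-loop defining $h^{uu}_{f^{-n}(x)}(y)$ is, to leading order, a pure $E^{uu}$-translation followed by an $E^s\oplus E^c$-correction whose $E^c$-component is of order $o(d^c(y,f^{-n}(x)))$ as $d^c(y,f^{-n}(x))\to 0$. The main obstacle will be making this step rigorous: Lemma~\ref{l.def-length}(2) provides only a two-sided ratio comparison conditional on a positive lower bound for the image-side distance, and cannot directly rule out $h^{uu}_{f^{-n}(x)}(y)=f^{-n}(x')$ for $y\neq f^{-n}(x)$. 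I therefore expect to need either a refinement of Lemma~\ref{l.def-length}(2) tracking the center component in signed coordinates, or an independent product-structure argument using the transverse intersection of $\W^{uu}(y)$ with $\W^s_{loc}(x')$ at scale $\eps_2$, to obtain genuine injectivity of $h^{uu}$.
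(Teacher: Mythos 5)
Your plan for Part~(2) — iterate backward to a scale where Lemma~\ref{l.def-length}(2) applies, then iterate forward with Lemma~\ref{l.basicdistort} — is the same strategy the paper uses, but your choice of a \emph{fixed} $n=n(R,\eps_2)$ does not work.  After $n$ backward iterates, the strong unstable distance $d^{uu}(f^{-n}(x),f^{-n}(x'))$ is of order $\lambda_0^{-n}R$, a quantity independent of $\eps$, while the center gaps $d^c_{f^{-n}(x)}(f^{-n}x_i,f^{-n}x_j)$ are of order $(\mu^-)^{-n}\eta\eps$ and go to $0$ with $\eps$.  Lemma~\ref{l.def-length}(2) requires a scale $\eps$ with $\eps>d^{uu}(z,z')$ \emph{and} center gaps $\geq\eta\eps$; with $n$ fixed these two conditions are incompatible for small $\eps$.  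In the paper's proof $n$ depends on the configuration (it is taken to be the first time the uu-distance drops below the relevant center distance, which exists by domination of $E^c$ over $E^{uu}$ backwards), and the distortion constant $c_{-n}(x,\eps_3)$ is nevertheless controlled uniformly: for $k$ up to a fixed threshold $N$ the center arcs have length $O(\|Df^{-1}\|^k\eps_3)$ (small because $\eps_3$ is small), while for $N\leq k\leq n$ the center arcs are by definition shorter than the uu-distance $d^{uu}(f^{-k}x,f^{-k}x')$, which decays geometrically in $k$; the sum in the definition of $c_{-n}$ is therefore bounded independently of $n$.  Your argument should be restructured along those lines.

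For Part~(1), the obstacle you describe is not actually there, and the independent ``product-structure'' argument you anticipate needing is unnecessary.  The paper again takes $n=n(x_0)$ so that after $f^{-n}$ the stable and strong unstable distances are dominated by $d^c_{f^{-n}(x)}(f^{-n}(x),f^{-n}(x_0))>0$.  At that point Lemma~\ref{l.def-length}(2) applies \emph{with the roles of $z$ and $z'$ swapped}: the suus configuration in Figure~\ref{f=deltascalecomp} is symmetric (the path $x\to x''\to x'''\to x'$ can be read in either direction), so the lemma can equally be stated with the hypothesis ``$d^c_z(x,y)\geq\eta\eps$'' on the $\Lambda$ side, which is what you know.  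The two-sided conclusion $d^c_z(x,y)\asymp_{1+\delta}d^c_{z'}(x',y')$ then forces the image distance to be strictly positive, which is exactly the injectivity statement $h^{uu}(x_0)\neq x'$.  The refinement of Lemma~\ref{l.def-length} you were contemplating is thus already available by symmetry; the real content is choosing $n$ as a function of $x_0$ rather than uniformly.
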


\begin{proof}
Given such a configuration of points with $x\neq x_0$, iterate by $f^{-n}$ so that the strong unstable distances become shorter than the center distance between $f^{-n}(x)$ and $f^{-n}(x_0)$.
Lemma~\ref{l.def-length} implies that $f^{-n}(x')\neq f^{-n}(x_0)$. This gives Item (1).

Let us now fix $1<L'<L^{1/3}$ and assume that $\eps_3$ is small enough.
The distortion of $f^{-n}$ along $\cP^c(x)$ is controlled by Lemma~\ref{c.distortion},
and by the lengths of the arcs $f^{-k}(\cP^c(x,\eps_3))$ for $0\leq k\leq n$:
having chosen $\eps_3$ small enough, $c_{-k}(x,\eps_3)$
is close to $1$ for $k\geq 0$ smaller than some fixed large integer $N$.
For $N\leq k\leq n$, the curve $f^{-k}(\cP^c(x,\eps_3))$
has  length smaller than the strong unstable distance between $f^{-k}(x)$
and $f^{-k}(x')$ (by definition of $n$), which is exponentially small with respect to $k$.
This bounds $c_{-n}(x,\eps_3)$ and gives:
\[ \frac{\bar d^c_{x} (x_1,x_0)}{\bar d^c_{x}  (x_3,x_0)} \asymp_{L'}
\frac{\bar d^c_{f^{-n}(x)} (f^{-n}(x_1),f^{-n}(x_0))}{\bar d^c_{f^{-n}(x)}  (f^{-n}(x_3),f^{-n}(x_0))}.\]
The local geometry ensures that the strong stable curves do not get too large.
We compare distances at these scales using Lemma~\ref{l.def-length}:
\[ \frac{\bar d^c_{f^{-n}(x')} (f^{-n}(h^{uu}(x_1)),f^{-n}(h^{uu}(x_0)))}{\bar d^c_{f^{-n}(x')}  (f^{-n}(h^{uu}(x_3)),f^{-n}(h^{uu}(x_0)))} \asymp_{L'}
\frac{\bar d^c_{f^{-n}(x)} (f^{-n}(x_1),f^{-n}(x_0))}{\bar d^c_{f^{-n}(x)}  (f^{-n}(x_3),f^{-n}(x_0))}.\]
Going back by $f^n$, controlling distortion along $\cP^c(f^{-n}(x'))$ as before, we obtain:
\[ \frac{\bar d^c_{f^{-n}(x')} (f^{-n}(h^{uu}(x_1)),f^{-n}(h^{uu}(x_0)))}{\bar d^c_{f^{-n}(x')}  (f^{-n}(h^{uu}(x_3)),f^{-n}(h^{uu}(x_0)))} \asymp_{L'}
\frac{\bar d^c_{x'} (h^{uu}(x_1),h^{uu}(x_0))}{\bar d^c_{x'}  (h^{uu}(x_3),h^{uu}(x_0))}.\]
This gives Item (2). 
\end{proof}

\begin{figure}[h]
\begin{center}
\includegraphics[scale=.22]{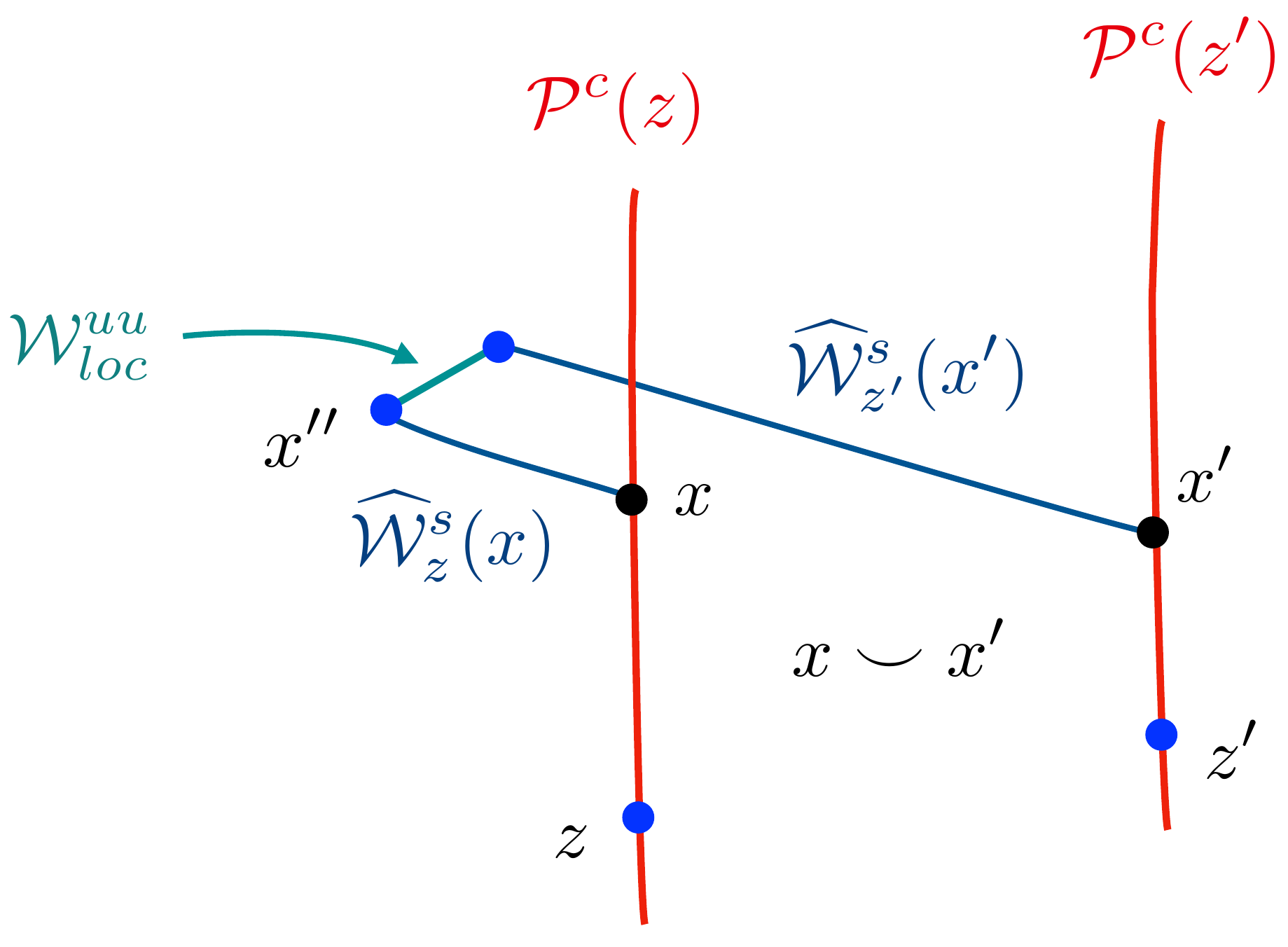}
\caption{The $suus$ relation. Points in $\Lambda$ are colored blue.}
\label{f=suus}
\end{center}
\end{figure}

\subsubsection{The $suus$ relation $\smile$}

For $z,z'\in \Lambda$, $x\in \cP^c(z)$ and   $x'\in \cP^c(z')$, we say that $x,x'$ are {\em  $suus$-related} and write $x\smile_{z,z'} x'$ (or just $x\smile x'$) if there exists $x''\in \hW^{s}_z(x)\cap \Lambda$
such that 
 $\hW^s_{z'}(x')\cap   \W^{uu}_{loc}(x'')\neq \emptyset$.
 See Figure~\ref{f=suus}. Observe that $\smile$ is symmetric.

The next lemma gives a similar comparison between distances of $suus$-related points lying on adjacent center plaques.  This identification describes something like a stable holonomy (for any  $z,w\in \Lambda$
and $x\in\P^c(z)$, there may exist several $x'\in\P^c(w)$, with $x\smile x'$; hence it is a relation between $\P^c(z)$ and $\P^c(w)$ rather than a function).

\begin{lemma}\label{l.sholonomy} Given $ L>1, \eta>0$, there is $\eps_4= \eps_4({\eta,L})>0$ such that for all $\eps\in (0,\eps_4)$, the following holds. 
Let $ z, x_0,x_1,x_2, x_0',x_1',x_2'$ such that $z\in \Lambda$, $x_0\in \Lambda\cap \P^{cs}(z)$,
$x_i \in \P^c(x_0,\eps)$, $x_i'\in \P^c(z,\eps)$ with $x_i\smile x_i'$ (see Figure~\ref{f=LocalsHolonomy}).
\begin{enumerate}
\item If $x_0\neq x_1$ then $x'_0\neq x'_1$.
\item If $d^c_{x_0}(x_i, x_j)>\eta\eps$ for $i\neq j$, then
$ \displaystyle \frac{\bar d^c_{x_0} (x_1,x_0)}{\bar d^c_{x_0}  (x_2,x_0)} \asymp_{L}  \frac{\bar d^c_{z} (x_1',x_0')}{\bar d^c_{z}  (x_2',x_0')}.$
\end{enumerate}
\end{lemma}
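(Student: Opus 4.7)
The plan is to adapt the backward-iteration argument of Lemma~\ref{l.holonomy} to the $suus$-configuration, with the relation $\smile$ playing the role of $\W^{uu}$-holonomy between center plaques, but now with the base points $x_0$ and $z$ lying on different local leaves of $\hW^s_z$ (rather than on a common $\W^{uu}$-leaf).

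For Item (1), I would argue by contradiction. Suppose $x_0\neq x_1$ but $x_0'=x_1'$. Iterate backward by $f^{-n}$: by domination of $E^{uu}$ over $E^c$, the $\W^{uu}$-segments of both $suus$-paths shrink strictly faster than the center distance $d^c_{f^{-n}(x_0)}(f^{-n}(x_0),f^{-n}(x_1))$. In the limit, the two paths are forced to collapse onto a single stable leaf through the common endpoint $f^{-n}(x_0')=f^{-n}(x_1')$, whence $f^{-n}(x_0)=f^{-n}(x_1)$ — a contradiction.

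For Item (2), I would iterate backward by $f^{-n}$, choosing $n$ so that the $\W^{uu}$-gaps $d^{uu}(f^{-n}(x_i''),f^{-n}(x_i'''))$, which decay like $\lambda^{-n}\eps_0$, become small compared with the iterated center distances $d^c_{f^{-n}(x_0)}(f^{-n}(x_i),f^{-n}(x_j))$. At that iterated scale, Lemma~\ref{l.def-length}(2) applies directly — taking $(f^{-n}(x_0),f^{-n}(z))$ as the pair $(z,z')$ of that lemma, with $suus$-witnesses $f^{-n}(x_i''),f^{-n}(x_i''')$ — yielding
\[
\bar d^c_{f^{-n}(x_0)}(f^{-n}(x_i),f^{-n}(x_j))\;\asymp_{1+\delta}\;\bar d^c_{f^{-n}(z)}(f^{-n}(x_i'),f^{-n}(x_j'))
\]
for any prescribed $\delta>0$ (chosen so that $(1+\delta)^3\le L$). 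Then I would transfer back to the original scale via Corollary~\ref{c.distortion} applied to $\P^c(f^{-n}(x_0))$ and $\P^c(f^{-n}(z))$, bounding the distortion factor $c_{-n}(\cdot,\eps)$ close to $1$ exactly as in the proof of Lemma~\ref{l.holonomy}: past a fixed initial number of iterates, the lengths of $f^{-k}(\P^c(x_0,\eps))$ are dominated by the exponentially decaying $\W^{uu}$-gap $\lambda^{-k}\eps_0$, so the resulting geometric series is summable.

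The main obstacle, which does not arise in Lemma~\ref{l.holonomy}, is controlling the stable gap $d^s(f^{-n}(x_0),f^{-n}(z))$: this grows like $\kappa^{-n}$ under backward iteration, and it must remain below the plaque scale (and below $\eps_2$) both to keep the plaque families of Proposition~\ref{p.plaque} valid and to verify the hypothesis of Lemma~\ref{l.def-length}(2) at the iterated scale. The resolution will be to take $\eps_4(\eta,L)$ sufficiently small that the number of backward iterations $n$ needed for the unstable-to-center ratio to become acceptable still respects the stable-expansion budget allotted by the initial value of $d^s(x_0,z)\le \eps_0$; this balance is made possible by the dominated-splitting hierarchy $\kappa<\mu^-\le\mu^+<\lambda$, combined with the fact that $d^{uu}(x_i'',x_i''')$ is already controlled in terms of $d^s(x_0,z)$ by the geometry of the $suus$-configuration.
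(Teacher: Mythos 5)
Your proposal iterates in the wrong direction, and this is not a detail that can be fixed by shrinking $\eps_4$: it is a structural mismatch with the geometry of the $suus$-configuration. The paper's proof, though terse, states explicitly that one iterates \emph{forward} by $f^n$ ``so that the stable distances become shorter than the center distance,'' then applies Lemma~\ref{l.def-length} at the forward-iterated scale, and only then ``iterates backwards, using distortion control'' to return to the original configuration. Your proposal inverts this, trying to reuse the backward iteration of Lemma~\ref{l.holonomy}.

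The reason forward is the right direction here is that Lemma~\ref{l.holonomy} and Lemma~\ref{l.sholonomy} are dual: in Lemma~\ref{l.holonomy} the base point $x'$ lies on $\W^{uu}(x,R)$, so the dominant gap to kill is an unstable one, and backward iteration (which contracts $E^{uu}$ strictly faster than it contracts $E^c$) makes $d^{uu}/d^c \to 0$. In Lemma~\ref{l.sholonomy} the base point $x_0$ lies in $\P^{cs}(z)$, the connecting $suus$-paths have their bulk in the two \emph{stable} legs, and the unstable leg $d^{uu}(x_i'',x_i''')$ is already negligible compared to the stable displacement. The gap to kill is therefore a stable one, and under backward iteration it does exactly the opposite of what you need: $d^s$ grows like $\kappa^{-n}$ while $d^c$ shrinks like roughly $(\mu^-)^{-n}$, and since $\kappa < \mu^-$ the ratio $d^s/d^c$ increases without bound. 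No choice of $\eps_4$ can rescue this, because it is a comparison of growth rates, not of initial sizes: the ``stable-expansion budget'' you invoke is exhausted strictly before the unstable-to-center ratio becomes acceptable, and the hypothesis $\eps > \max\{d^s, d^{uu}\}$ of Lemma~\ref{l.def-length}(2) can never be met at the iterated scale. Forward iteration, by contrast, shrinks $d^s/d^c$ at rate $\kappa/\mu^-$, and the quantity one must then monitor is the unstable leg, which starts so small that it remains dominated.

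The same directional error undermines your argument for Item (1): under backward iteration the two $suus$-paths from $x_0, x_1$ to the common endpoint $x_0'=x_1'$ do not ``collapse onto a single stable leaf'' — their stable legs blow up and leave the plaque scale entirely. Again the fix is to iterate forward, shrinking the stable legs until Lemma~\ref{l.def-length} forces the endpoints apart.
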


\begin{figure}[h]
\begin{center}
\includegraphics[scale=.22]{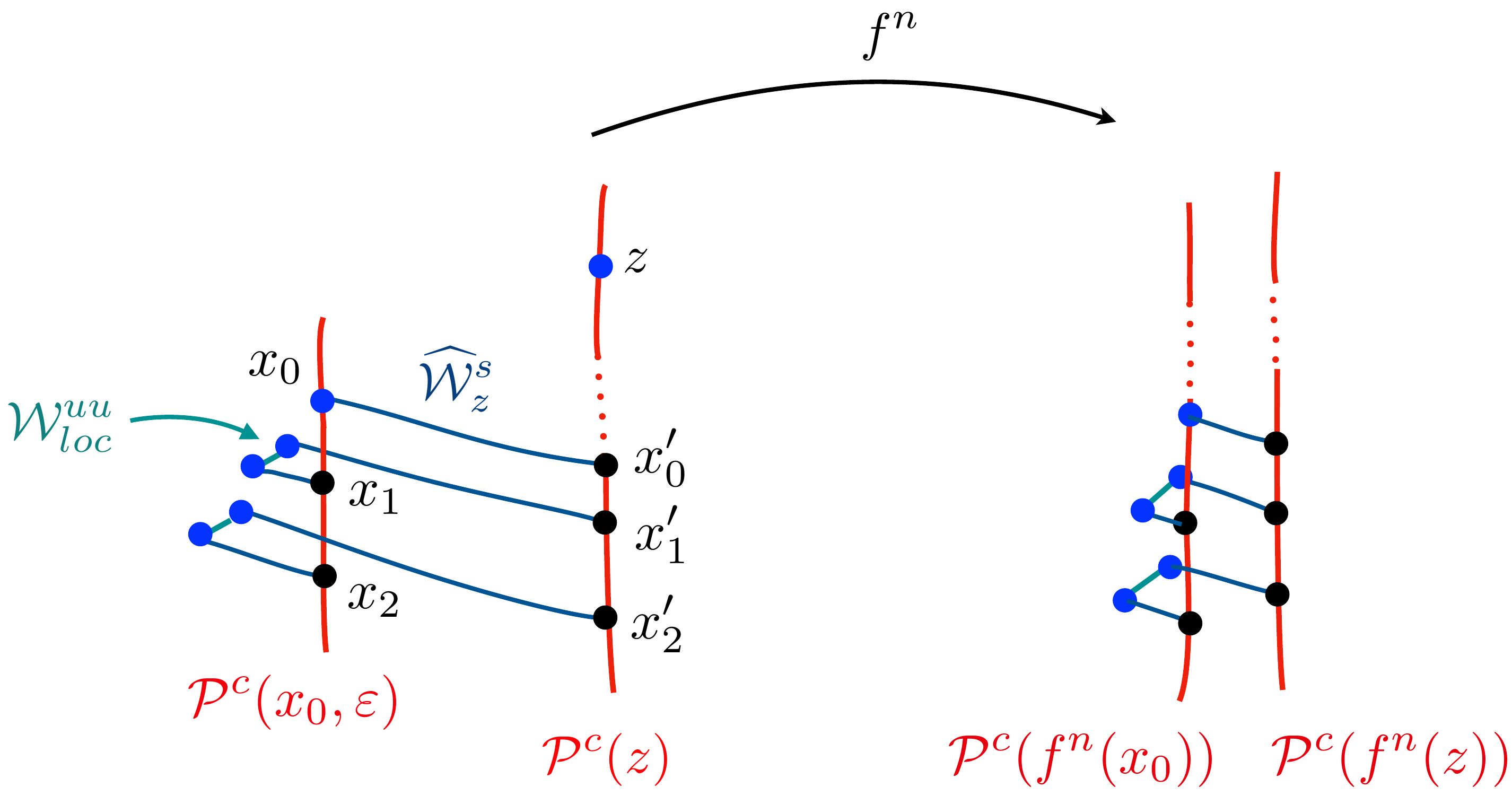}
\caption{Proof of Lemma~\ref{l.sholonomy}.  Points in $\Lambda$ are colored blue.}
\label{f=LocalsHolonomy}
\end{center}
\end{figure}

\begin{proof}[Proof of Lemma~\ref{l.sholonomy}]  The proof is very similar to the proof of Lemma~\ref{l.holonomy}. Given such a configuration of points,  we iterate by $f^{n}$ so that the stable distances become shorter than the center distance between $f^{n}(x)$ and $f^{n}(y)$, while this center distance remains small
(less than $\eps$) and the points remain in the center plaque of $f^{n}(w)$.  This is depicted in Figure~\ref{f=LocalsHolonomy}.  One then compares distances along the center using Lemma~\ref{l.def-length}  and iterates backwards, using distortion control.
\end{proof}

\subsection{A key proposition for the SH case}
The following proposition is fairly technical and is needed to handle the case where $\Lambda$ is not hyperbolic but merely satisfies the SH property. It gives a method to transfer configurations between points $x_i$ in a center plaque $\cP^c(z)$,
to another center plaque $\cP^c(z')$ with $z'\in \W^{uu}(z)$.
Note that the SH property is not used in its proof.

\begin{proposition}\label{p.waitingdisk} Given $\eps, R, \eta >0$ and $\Delta,L>1$, there exists \\$\eps_5 = \eps_5(\eps, R, \eta, \Delta,L)>0$ such that for all $\widetilde \eps\in (0,\eps_5)$, the following holds.

For any $z\in \Lambda$  there exists  $m\in \NN$ such that 
for any \[z'\in D:= f^{-m}\left( \W^{uu}(f^m(z),R)\right),\]
for  any $x_0',x_1',x_2'\in \P^c(z',\widetilde \eps)$ such that
$d^c_{z'}(x_i', x_j')> \eta\, \widetilde \eps$ when $i\neq j$,
and for any $x_0,x_1, x_2\in \P^c(z,\widetilde \eps)$ with  $x_i\smile x_i'$ for $i=0,1,2$,
 we have:
\begin{enumerate}
\item   $d^c_{f^k(z')}(f^k(z'),f^k(x'_i))<\eps$ for all $0\leq k\leq m$ and $i=0,1,2$;
\item  $\displaystyle \frac{\bar d^c_{z'} (x_1',x_0')}{\bar d^c_{z'}  (x_2',x_0')}\asymp_\Delta \frac{\bar d^c_{f^m(z')}(f^m(x_1'),f^m(x_0'))}{\bar d^c_{f^m(z')}(f^m(x_2'),f^m(x_0'))};$
\item  $\displaystyle \frac{\bar d^c_{z} (x_1,x_0)}{\bar d^c_{z}  (x_2,x_0)} \asymp_{L} \frac{\bar d^c_{z'} (x_1',x_0')}{\bar d^c_{z'}  (x_2',x_0')}$.

\end{enumerate}
\end{proposition}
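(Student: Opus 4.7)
The plan is to iterate the entire configuration forward by $m=m(z)$ steps so that $f^m(z')$ lies in $\cW^{uu}(f^m(z),R)$ (by definition of $D$), and then compare ratios of signed center distances at $f^m(z)$ and $f^m(z')$ via the $\cW^{uu}$-holonomy $h^{uu}\colon \cP^c(f^m(z))\cap \Lambda \to \cP^c(f^m(z'))$ from Section~\ref{ss.uu-holonomy}. The key observation is that, although the relations $x_i \smile x_i'$ involve non-trivial $\hW^s$-legs of length up to $\widetilde\eps$, these legs are exponentially contracted by $f^m$ and hence become negligible relative to $R$. Consequently $f^m(x_i')$ essentially equals $h^{uu}(f^m(x_i))$, up to a shift along $\hW^s$ much smaller than the center distances being compared, and Lemma~\ref{l.holonomy} can be applied at scale $R$.

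Concretely, I would take $m=m(z)$ to be the smallest integer $n$ such that the length of the center plaque $f^n(\cP^c(z,\widetilde\eps))$ reaches a threshold in $[\eps/(2\|Df\|),\eps/2]$; if no such $n$ exists (the center does not expand past $\widetilde\eps$), take $m$ to be any sufficiently large integer satisfying $m\geq N_0$ for a threshold $N_0=N_0(\widetilde\eps,R,\eta)$ guaranteeing enough $\hW^s$-contraction. This choice simultaneously ensures three things. First, the iterates $f^k(\cP^c(z,\widetilde\eps))$ for $0\leq k\leq m$ stay below $\eps$; and since $z'\in D$ is $\cW^{uu}$-exponentially close to $z$ (with $d^{uu}(z,z')\leq R/\lambda_{f,m}(z)$), Lemma~\ref{l.def-length}(2) transfers this bound from $\cP^c(z)$ to $\cP^c(z')$, yielding conclusion (1). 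Second, the H\"older distortion sum $\sum_{i<m}\mathrm{Length}(f^i(\cP^c(z,\widetilde\eps)))^\theta$ appearing in $c_m(z,\widetilde\eps)$ (Lemma~\ref{l.basicdistort}) is either geometrically dominated by its last term (when the plaque grows) or uniformly bounded by $m(\widetilde\eps)^\theta$ (when it does not), hence can be made arbitrarily close to $1$ by choosing $\eps_5$ small; the analogous estimate holds at $z'$. Conclusion (2) then follows immediately from Lemma~\ref{l.basicdistort} applied to $f^m$ on $\cP^c(z')$.

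For conclusion (3), I would chain three near-isometries of ratios of signed center distances. \emph{(a)} The distortion of $f^m$ on $\cP^c(z)$ at scale $\widetilde\eps$, giving a factor $L_1$ close to $1$. \emph{(b)} The unstable holonomy comparison of Lemma~\ref{l.holonomy} between $\cP^c(f^m(z))$ and $\cP^c(f^m(z'))$ at unstable distance $\leq R$ and plaque scale $\approx\eps$, applied to the triple $f^m(x_0),f^m(x_1),f^m(x_2)$: the image ratio agrees with the ratio of $f^m(x_0'),f^m(x_1'),f^m(x_2')$ up to an error coming from the residual $\hW^s$-legs of the iterated $suus$-paths, which is negligible because those legs have length at most $\kappa_{f,m}(x_i)\widetilde\eps\ll \eta\eps$; this contributes a factor $L_2$ close to $1$. \emph{(c)} The reverse distortion of $f^{-m}$ on $\cP^c(z')$, giving a factor $L_3$ close to $1$. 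Multiplying, $L_1L_2L_3 \leq L$ for $\eps_5$ small enough, yielding (3).

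The main obstacle is the simultaneous control of $m(z)$ and the uniform threshold $\eps_5$: $m$ must be large enough for $\hW^s$-contraction to render the $suus$-legs negligible at scale $R$, yet small enough for the center plaque at $z'$ not to exit scale $\eps$ over $m$ iterations, and both constraints must be met with a single $\eps_5$ independent of $z\in \Lambda$. The threshold-based choice of $m$ above resolves this dichotomy: the length criterion automatically caps $m$ at the first passage time of the center plaque to the scale $\eps/2$, and below that threshold the H\"older distortion is controlled geometrically by its terminal term; in the degenerate case where the plaque never grows, the absence of expansion makes every iterate trivially small, so $m$ may safely be taken as large as needed for $\hW^s$-contraction.
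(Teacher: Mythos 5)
Your choice of $m$ is the crux of the difficulty, and the one you propose does not work. You take $m$ to be the first passage time at which $\operatorname{Length}(f^m(\cP^c(z,\widetilde\eps)))$ reaches a fixed scale $\approx\eps$. The problem is the distortion control of conclusion~(2). The quantity $c_m(z,\widetilde\eps)=\exp\bigl(C_\theta\sum_{i<m}\operatorname{Length}(f^i(\cP^c(z,\widetilde\eps)))^\theta\bigr)$ does \emph{not} go to $1$ as $\eps_5\to 0$ under your scheme: the tail of the sum is dominated by its last terms, which have size of order $\eps^\theta$, and $\eps$ is a \emph{given} constant of the proposition that you are not allowed to shrink. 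Even under uniform exponential center expansion the sum is bounded below by a fixed quantity proportional to $\eps^\theta$; when the expansion is sub-exponential (which is permitted here, since the center is merely SH, not uniformly expanding), $m$ can go to infinity as $\widetilde\eps\to 0$ while the plaque lengths remain comparable to $\eps$ over many iterates, and the sum actually diverges. So conclusion~(2), which must hold for an arbitrary prescribed $\Delta>1$, fails. Your claim that the sum is ``geometrically dominated by its last term (when the plaque grows)'' is true, but that dominated sum is of size $\eps^\theta$, not small; and your estimate ``$m\,(\widetilde\eps)^\theta$'' for the non-growing case tacitly assumes the plaque stays at scale $\widetilde\eps$, which need not hold when it grows slowly.

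The paper avoids this by \emph{never} letting the center plaque approach scale $\eps$; in conclusion~(1) the bound $\eps$ is only an upper envelope, never attained. The proof splits $m$ into four blocks. Step~0 is a fixed preparatory burn-in of length $\ell_0$. Step~1 iterates $n_1$ times until the local stable manifolds become smaller than $\widetilde\eta$ times the center plaque length; during this phase the center length is dominated by the stable diameter $\lesssim \kappa_0^k r_0$, a geometrically \emph{decreasing} bound, so the distortion sum over Step~1 is $\lesssim r_0^\theta/(1-\beta^\theta)$ and can be made as small as desired by choosing the internal constant $r_0$ small (with $\eps_5$ then shrunk to control Step~0). Conclusion~(3) is established at this time $m_1=\ell_0+n_1$, when the stable and unstable distances are both $\lesssim\widetilde\eta\widetilde\eps_1$, i.e.\ small relative to the center distances being compared. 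Step~2 then uses the domination $\mu<\lambda$ (via the ratio $\nu_0=\sup\mu/\lambda<1$): the unstable disk $D_1$ grows to a fixed radius $r_0/(\lambda_0(1+\widetilde\delta))$ in $n_2$ steps while the center length stays $<\beta^{n_2}r_0$ (still exponentially small). Step~3 is a \emph{fixed} number $\ell_3$ of iterations that recovers the unstable scale $R$, contributing only bounded distortion. This architecture is what makes the center length terminate at $\lesssim\mu_0^{\ell_3}\beta^{n_2}r_0\ll\eps$, so conclusions~(1) and~(2) hold simultaneously for a prescribed $\Delta$. Your sketch gets the role of the $suus$-relation and of Lemmas~\ref{l.holonomy}, \ref{l.sholonomy} essentially right, but you need to replace ``iterate to scale $\eps$'' with ``iterate until the unstable disk (not the center plaque) reaches a fixed internal scale $r_0$, then finish with a bounded number of steps,'' keeping the center plaque exponentially small throughout.
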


\begin{figure}[h]
\begin{center}
\includegraphics[scale=.22]{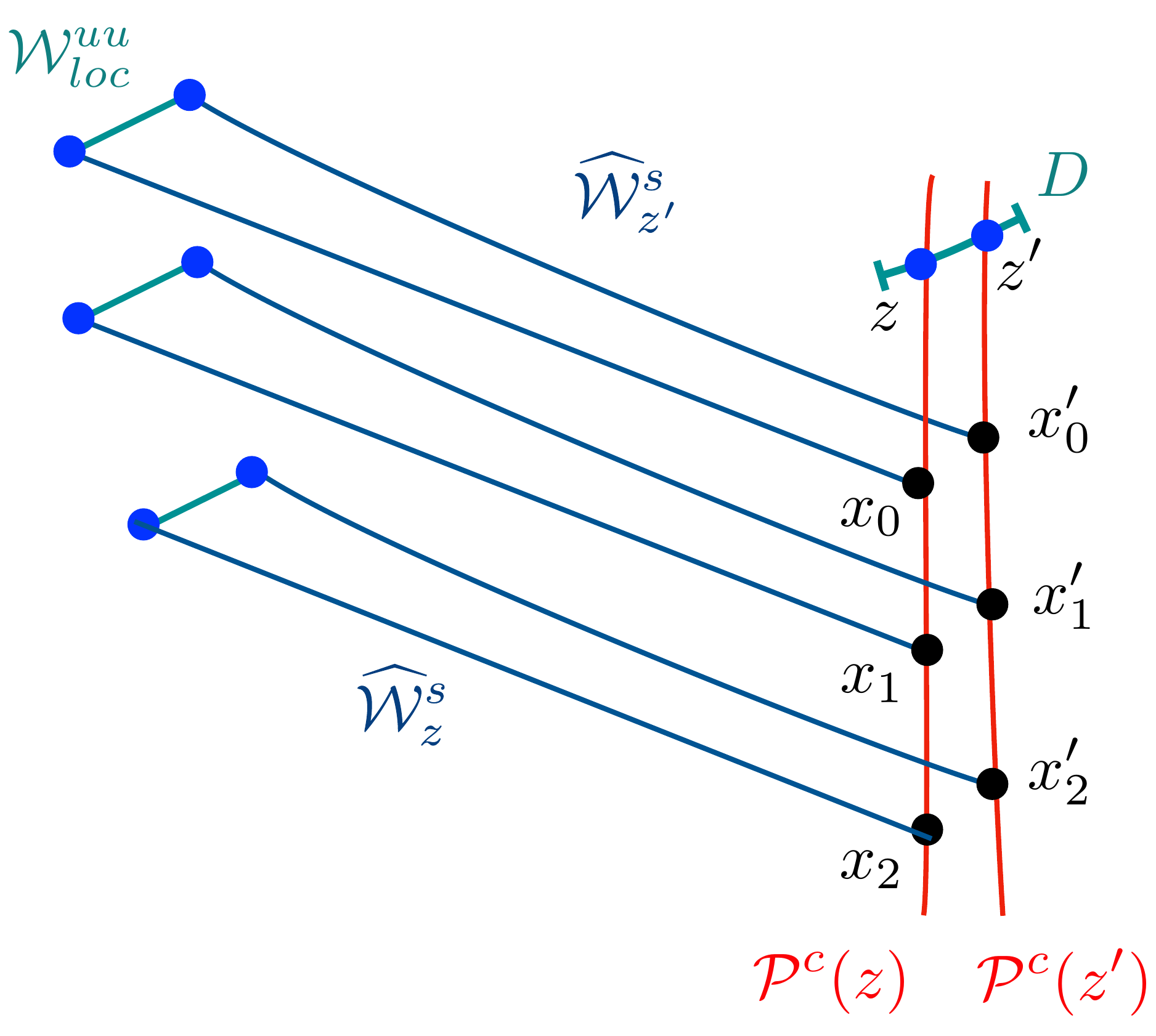}
\caption{Configuration in Conclusion (3) of Proposition~\ref{p.waitingdisk}.}
\label{f=waitingdisk}
\end{center}
\end{figure}

\begin{proof}
Recall the functions $\kappa<\mu<\lambda$ that were introduced in \S~\ref{ss.partial-hyperbolicity}, and let
$$\kappa_0=\sup \kappa,\;\; \lambda_0=\inf \lambda,\;\;  \mu_0=\sup \mu,\;\; \nu_0=\sup \tfrac \mu\lambda,$$
$$\beta \in (\max\{\lambda_0^{-1}, \nu_0, \kappa_0\},1).$$
Item (5) of Proposition~\ref{p.plaque} states that the tangent bundles of the center plaques $\cP^c(w)$ are $\theta$-H\"older continuous, for some $\theta\in(0,1)$; we denote by $C_\theta>0$ a H\"older constant.
The center bundle $E^c$ on $\Lambda$ is also H\"older continuous, with some exponent $\alpha>0$ and H\"older constant
$C_\alpha>0$.

The following choices of constants could be compressed, but to make things easier to read, we have chosen to leave the choices as closely aligned as possible with their application in the proof.  The proof is broken into 4 steps; we indicate where appropriate in which step each choice is used.

Let $\eps, \eta > 0$, $\Delta,L>1$ be given.  We choose $\widetilde\Delta>1$, $\widetilde\delta,\widetilde\eta\in (0,1)$,   satisfying: 
\begin{itemize}
\item[$\bullet$] $\widetilde\Delta^4(1+\widetilde\delta)^2\leq  \min\{L,\Delta\}$ (Steps 1 and 3),
\item[$\bullet$] $\widetilde \Delta^2 \exp\big(C_\alpha \widetilde \eta^\alpha/(1-\lambda_0^{-\alpha})\big)<2$ (Steps 1 and 2), and
\item[$\bullet$]  $\widetilde\eta< \eta /4$ (Step 1).
\end{itemize}
Let $\eps_2=\eps_2(\widetilde\delta, \widetilde \eta)$ be given by Lemma~\ref{l.def-length}, and $r_0 <\min\{ \eps_2,\widetilde\eta,1\}$ such that
\begin{itemize}
\item[$\bullet$]  $\exp(C_\theta(2\widetilde \eta^{-1}r_0)^\theta/(1-\beta^\theta))<\widetilde\Delta^{1/2}$ (Steps 1 and 2).
\end{itemize}
As in \S~\ref{ss.brushes}, the size of local manifolds is $\eps_0$.
Fix $\ell_0\geq 1$ such that
\begin{itemize}
\item[$\bullet$]  $\kappa_0^{\ell_0}\eps_0 < r_0$ (Step 0),
\end{itemize}
and  choose $\ell_3\geq 1$ such that for every $w\in\Lambda$, 
\begin{itemize}
\item[$\bullet$]  $f^{\ell_3}\big(\W^{uu}(w, \frac{r_0}{\lambda_0(1+\widetilde\delta)})\big)\supseteq \W^{uu}(f^{\ell_3}(w),R)$ (Step 3).
\end{itemize}
Now choose integers $N_1, N_2\geq 1$ such that 
\begin{itemize}
\item[$\bullet$] $\widetilde\eta^{-1} \max\{\nu_0,\lambda_0^{-1}\}^{N_2}<\beta^{N_2}$ (Step 2),
\item[$\bullet$] $5r_0 \mu_0^{\ell_3} \beta^{N_2} < \eps$ (Step 3),
\item[$\bullet$] $\exp(C_\theta\ell_3( 5 \mu_0^{\ell_3}\beta^{N_2})^{\theta}r_0) < \widetilde\Delta$ (Step 3),
\item[$\bullet$] $\beta^{N_1}\leq \left(\widetilde\eta \lambda_0 (1+\widetilde\delta)\|Df\|^{N_2}\right)^{-1}$ (Step 2), and
\item[$\bullet$]  $2\mu_0{\widetilde\eta}^{-1} \kappa_0^{n_1-1} <  \beta^{n_1}$ for any integer $n_1\geq N_1$ (Step 1).
\end{itemize}
Finally we choose $\eps_5 = \eps_5({\eps, R, \Delta,L,\eta})\in (0,1)$ satisfying
\begin{itemize}
\item[$\bullet$] $\eps_5 \mu_0^{i} < \eps_0$ for all $0\leq i\leq \ell_0$ (Step 1),
\item[$\bullet$] $\exp(C_\theta\ell_0(2\eps_5\mu_0^{\ell_0})^{\theta})\leq \widetilde\Delta^{1/2}$  (Steps 1 and 2), and
\item[$\bullet$] $2\mu_0^{\ell_0+N_1-1}\eps_5 < \widetilde\eta^{-1} \|Df^{-1}\|^{-(N_1-1)} r_0$ (Step 1).
\end{itemize}

Having chosen the constants, we continue to the proof.  Fix $\widetilde\eps\in (0, \eps_5)$ and
$z\in \Lambda$.  We will choose integers $n_1, n_2\geq 0$ and consider the points:
$$z_0:=f^{m_0}(z),\quad z_1:=f^{m_1}(z), \quad z_2:= f^{m_2}(z), \text{ and } z_3:=  f^{m}(z), \text{ where}$$
$$m_0=\ell_0,\quad m_1=\ell_0 + n_1,\quad m_2=\ell_0 + n_1+n_2, \quad m=m_3=\ell_0 + n_1+n_2 + \ell_3.$$
We denote $\widetilde \eps_i=\hbox{Length}
(f^{m_i}(\cP^c(z,\widetilde\eps)))$ for $i\in\{0,1,2\}$.

We fix  round unstable disks $D_1, D_2, D_3$ centered at $z_1$, $z_2$, $z_3$:
$$D_1:= \W^{uu}(z_1, \widetilde\eta \widetilde\eps_1), \quad
D_2 = \W^{uu}\big(z_2, \tfrac{r_0}{\lambda_0(1+\widetilde\delta)}\big),\quad
D_3 =\W^{uu}(z_3,R).$$
Iterations at Step 0 are a preparation, ensuring that all distances are small during the two next steps.
The disk $D_1$ will ensure Conclusion (3) holds at Step 1. The disk $D_2$ will be shown to be included in $f^{n_2}(D_1)$ and will ensure Conclusion (2) holds at Step 2. 
Finally iterating at Step 3 will be used to recover the unstable size $R$:
the disk $D_3$ gives $D=f^{-m}(D_3)$. Conclusion (1) is obtained in this last step.
\bigskip

\noindent
{\bf Step 0:} {\em Iterate $\ell_0$ times. Prove that the images of local stable manifolds through any  $w\in \Lambda$ have diameter less than $r_0$ and that
$\widetilde\eps_0\leq 2\widetilde\eps \mu_0^{\ell_0}$.}

The size of the stable manifolds is controlled by the condition $\kappa_0^{\ell_0}\eps_0 < r_0$.
Then observe that
$\widetilde\eps_0= \hbox{Length}(f^{\ell_0}(\cP^c(z,\widetilde\eps))) \leq 2\widetilde\eps \mu_0^{\ell_0}$.
\bigskip

\begin{figure}[h]
\begin{center}
\includegraphics[scale=.21]{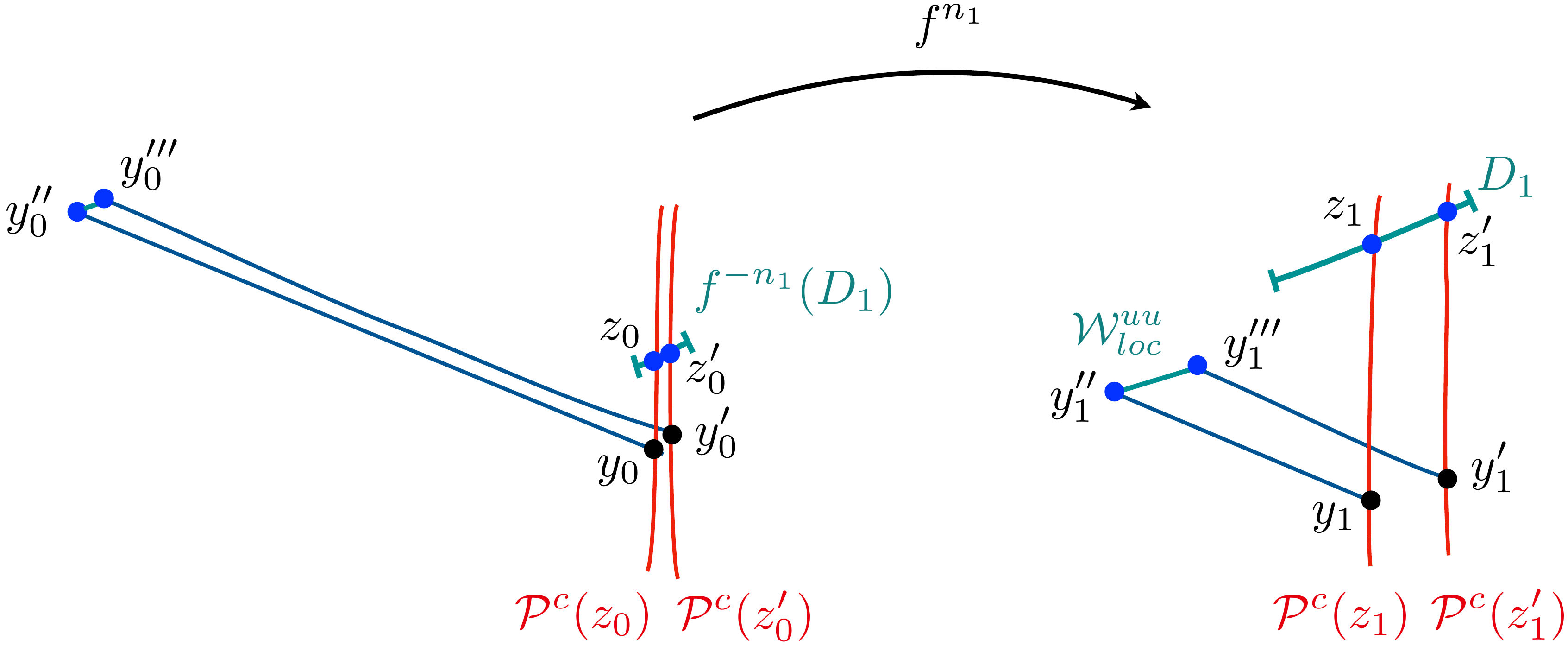}
\caption{Choice of $n_1$ and $D_1$ in the proof of Proposition~\ref{p.waitingdisk}: after $n_1$ iterations, the stable and unstable distances are smaller than $\widetilde \eta$ times the center lengths.}
\label{f=n1choice}
\end{center}
\end{figure}
\noindent
{\bf Step 1:} {\em Iterate  until the first time $n_1$ that $\operatorname{Diam}(f^{n_1}(\W^s(z_0,r_0)))$ is smaller than
${\widetilde\eta}\operatorname{Length}(f^{m_0+n_1}(\cP^c(z,\widetilde\eps)))$.  Prove $\widetilde\eps_1<\beta^{n_1}r_0$
and for all $z'\in f^{-m_1}(D_1)$,
$$c_{m_1}(z', \widetilde\eps)<\widetilde\Delta \text{ and } \tfrac 1 2 \widetilde \eps_1 \leq \operatorname{Length}(f^{m_1}(\cP^c(z',\widetilde\eps)))\leq 2\widetilde \eps_1.$$
Then prove Conclusion (3) for all $z'\in f^{-m_1}(D_1)$.}

Here are the details. For $k\geq 0$, $\hbox{Length}(f^{m_0+k}(\cP^c(z,\widetilde\eps)))< 2\mu_0^{m_0+k}\widetilde\eps$, and the diameter of $f^k(\W^s(z_0,r_0))$ is at least
$\|Df^{-1}\|^{-k}  r_0$.  Our choices of $\widetilde \eps, \widetilde\eps_5$  give $2\mu_0^{m_0+N_1-1}\widetilde\eps < \widetilde\eta^{-1} \|Df^{-1}\|^{-(N_1-1)} r_0$, which implies $n_1\geq  N_1$.

The diameter of  $f^{n_1-1}(\W^s(z_0,r_0))$ is at most $2\kappa_0^{n_1-1}r_0$, and is greater than $\widetilde\eta\cdot\hbox{Length}(f^{m_0+n_1-1}(\P^c(z,\widetilde\eps )))$, by our choice of $n_1$. It follows that $\widetilde\eps_1 < 2\mu_0 {\widetilde\eta}^{-1}\kappa_0^{n_1-1}r_0$.
Our choice of $N_1$ and the fact that $n_1\geq N_1$ imply  that $2\mu_0{\widetilde\eta}^{-1} \kappa_0^{n_1-1} <  \beta^{n_1}$.  Hence $\widetilde\eps_1< \beta^{n_1}r_0$.
 
 Now we do the distortion  and comparison estimates for $z'\in f^{-m_1}(D_1)$.

 \begin{claim}\label{c=5arcs}  For all  $z'\in f^{-m_1}(D_1)$, we have
 $c_{m_1}(z', \widetilde\eps) <\widetilde\Delta$ and
$$\tfrac 1 2 \widetilde \eps_1 < \operatorname{Length}(f^{m_1}(\cP^c(z',\widetilde\eps))) < 2\widetilde \eps_1.$$
\end{claim}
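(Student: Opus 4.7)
The plan is to first establish the required bounds at the reference basepoint $z$ using the stable-manifold dichotomy built into the definition of $n_1$, and then transfer them to an arbitrary $z'\in f^{-m_1}(D_1)$ by combining backward $\W^{uu}$-contraction with the H\"older regularity of $E^c$. Throughout, write $\ell_i(w):=\operatorname{Length}(f^i(\cP^c(w,\widetilde\eps)))$.

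For the basepoint $z$, split the sum $\sum_{i=0}^{m_1-1}\ell_i(z)^\theta$ at $i=\ell_0$. When $i<\ell_0$, the pointwise bound $\|Df|_{E^c}\|\leq\mu_0$ yields $\ell_i(z)\leq 2\widetilde\eps\mu_0^{\ell_0}\leq 2\eps_5\mu_0^{\ell_0}$, and by the prescribed bound on $\eps_5$ this regime contributes at most $\widetilde\Delta^{1/2}$ to $c_{m_1}(z,\widetilde\eps)$ via Lemma~\ref{l.basicdistort}. When $i=\ell_0+k$ with $0\leq k<n_1$, the minimality of $n_1$ gives
\[\operatorname{Diam}(f^k(\W^s(z_0,r_0)))\;\geq\;\widetilde\eta\,\ell_{\ell_0+k}(z),\]
while uniform stable contraction gives $\operatorname{Diam}(f^k(\W^s(z_0,r_0)))\leq 2\kappa_0^k r_0\leq 2\beta^k r_0$. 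Thus $\ell_{\ell_0+k}(z)\leq 2\widetilde\eta^{-1}\beta^k r_0$, and summing the resulting geometric series, the second regime contributes at most $\widetilde\Delta^{1/2}$ to $c_{m_1}(z,\widetilde\eps)$ by the choice of $r_0$. Multiplying, $c_{m_1}(z,\widetilde\eps)<\widetilde\Delta$, and by definition $\ell_{m_1}(z)=\widetilde\eps_1$.

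To transfer these bounds to $z'\in f^{-m_1}(D_1)$, observe that $f^{m_1}(z')\in\W^{uu}(z_1,\widetilde\eta\widetilde\eps_1)$, so backward $\W^{uu}$-contraction at rate at least $\lambda_0$ yields $d(f^i(z'),f^i(z))\leq\widetilde\eta\widetilde\eps_1\lambda_0^{-(m_1-i)}$ for $0\leq i\leq m_1$. Combining this with the $\alpha$-H\"older continuity of $E^c$ (with constant $C_\alpha$) and the distortion bound just obtained for $z$, the two plaque lengths compare as
\[\left|\log\frac{\ell_i(z')}{\ell_i(z)}\right|\;\leq\;C_\alpha\sum_{j=0}^{i-1}d(f^j(z'),f^j(z))^\alpha\;\leq\;\frac{C_\alpha\widetilde\eta^\alpha}{1-\lambda_0^{-\alpha}}.\]
By the prescribed inequality $\widetilde\Delta^2\exp(C_\alpha\widetilde\eta^\alpha/(1-\lambda_0^{-\alpha}))<2$, the ratio $\ell_i(z')/\ell_i(z)$ lies in $(\widetilde\Delta^2/2,\,2/\widetilde\Delta^2)\subset(1/2,2)$; specializing to $i=m_1$ gives $\tfrac12\widetilde\eps_1<\ell_{m_1}(z')<2\widetilde\eps_1$. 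The same comparison bounds $\sum_i\ell_i(z')^\theta$ by a factor close to $\sum_i\ell_i(z)^\theta$, which is absorbed by the slack in the two $\widetilde\Delta^{1/2}$ budgets (the choices of $\eps_5$ and $r_0$ were made with this in mind), yielding $c_{m_1}(z',\widetilde\eps)<\widetilde\Delta$.

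The principal obstacle is the circular dependence between the distortion factor $c_i$ and the plaque lengths $\ell_j$: the H\"older comparison needs a distortion bound, while the distortion sum is built out of lengths. This is resolved by first carrying out all length estimates at the fixed reference point $z$, where the minimality in the definition of $n_1$ provides explicit uniform control through the stable manifolds, and only afterwards transferring to $z'$ using the exponentially small orbit errors produced by backward $\W^{uu}$-contraction. The hierarchical choice of constants at the start of the proof of Proposition~\ref{p.waitingdisk} is calibrated precisely to close this bootstrap.
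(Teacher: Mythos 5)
Your proposal follows the same route as the paper's: first obtain length control along the orbit of the reference point $z$ from the minimality of $n_1$ and the uniform stable contraction, then transfer this control to an arbitrary $z'\in f^{-m_1}(D_1)$ using backward $\W^{uu}$-contraction together with the $\alpha$-H\"older regularity of $E^c$. Two small points of imprecision are worth flagging. First, your displayed inequality bounds $\bigl|\log\bigl(\ell_i(z')/\ell_i(z)\bigr)\bigr|$ by the H\"older sum alone; in fact the H\"older continuity of $E^c$ only compares the derivative norms $\|Df^i|_{E^c}\|$ at $z$ and $z'$, and converting derivative ratios to length ratios requires the distortion estimates at \emph{both} $z$ and $z'$, producing the extra $2\log\widetilde\Delta$ term that appears in the paper's version of this inequality. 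The constraint $\widetilde\Delta^2\exp\bigl(C_\alpha\widetilde\eta^\alpha/(1-\lambda_0^{-\alpha})\bigr)<2$ is calibrated precisely so that the H\"older term plus $2\log\widetilde\Delta$ stays below $\log 2$; your omission is absorbed by the constraint so the final conclusion is not affected, but the displayed inequality as written is not correct. Second, the transfer from $z$ to $z'$ is not a one-shot step as your description suggests: bounding $c_{k+1}(z',\widetilde\eps)$ requires length bounds $\ell_i(z')$ for $i\leq k$, which themselves depend on the distortion bounds $c_i(z',\widetilde\eps)$. The circularity you correctly identify is resolved by an explicit induction on $k$, interleaving the distortion bound $c_{k+1}(z',\widetilde\eps)<\widetilde\Delta$ (which uses length comparisons up to step $k$) with the length comparison $\ell_{k+1}(z')\asymp_2\ell_{k+1}(z)$ (which uses the distortion bound just obtained); the paper says ``this completes the induction.'' Doing the length estimates at $z$ first is the right preparation, but it does not eliminate the need for the bootstrap; it supplies the explicit majorant that makes the bootstrap converge.
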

  
\begin{proof}
First we note that for any $0\leq k<m_0=\ell_0$,
$$\operatorname{Length}(f^{k}(\cP^c(z,\widetilde\eps)))\leq
2\widetilde \eps\mu_0^k<\eps_5\mu_0^k<\eps_0.$$
By definition of $n_1$, each $0\leq k<n_1$ satisfies
$$\operatorname{Length}(f^{m_0+k}(\cP^c(z,\widetilde\eps)))\leq
{\widetilde\eta}^{-1}\operatorname{Diam}(f^{k}(\cP^s(z_0,r_0)))\leq 2{\widetilde\eta}^{-1} \kappa_0^{k}r_0<{\widetilde\eta}^{-1} \beta^{k}r_0.$$

Fix an arbitrary $z'\in f^{-m_1}(D_1)$.
An inductive argument gives that  for all $0\leq k\leq m_1$:
$$\tfrac 1 2 \hbox{Length}(f^{k}(\P^c(z,\widetilde\eps ))) < \hbox{Length}(f^{k}(\P^c(z',\widetilde\eps ))) < 2 \hbox{Length}(f^{k}(\P^c(z,\widetilde\eps ))).$$
With the definition in \S\ref{ss.distortion}, these inequalities imply
$$c_{k+1}(z', \widetilde\eps) \leq
\exp(C_\theta \ell_0 (2 \eps_5\mu_0^{\ell_0})^\theta) \exp(C_\theta (2{\widetilde\eta}^{-1}r_0)^\theta/(1-\beta^\theta))
< \widetilde \Delta.$$
The distortion estimate in Lemma~\ref{l.basicdistort} then gives
$$\hbox{Length}(f^{k+1}(\P^c(z',\widetilde\eps)))\asymp_{\widetilde \Delta} \|Df^{k+1}(z')|_{E^c}\| 2\widetilde\eps.$$
Note that
$d(f^i(z),f^i(z'))\leq \widetilde \eta \lambda_0^{i-m_1}\widetilde \eps_1 < \widetilde \eta\lambda_0^{i-m_1}$.
In particular
$$\big|\log\tfrac {\|Df^{k+1}(z')|_{E^c}\|}{\|Df^{k+1}(z|_{E^c})\|}\big| <C_\alpha \sum_{i=0}^{k}
( \widetilde \eta\lambda_0^{i-m_1})^\alpha< \tfrac{C_{\alpha} \widetilde \eta^\alpha }{1-\lambda_0^\alpha}.$$
We thus obtain
$$\bigg|\log\tfrac{\hbox{Length}(f^{k}(\P^c(z',\widetilde\eps )))}{\hbox{Length}(f^{k}(\P^c(z,\widetilde\eps )))}\bigg|\leq \tfrac{C_{\alpha} \widetilde \eta^\alpha}{1-\lambda_0^\alpha} + 2 \log \widetilde \Delta < \log 2,$$
with our choices of $r_0,\widetilde \Delta$. This completes the induction.
As explained before, this implies
$c_{m_1}(z', \widetilde\eps)< \widetilde \Delta$.
\end{proof}
   
To prove Conclusion (3) of the proposition, we  fix $z'\in f^{-m_1}(D_1)$ and points  $x_{0},x_{1}, x_2 \in \P^c(z, \widetilde\eps)$ and $x_0',x_1',x_2'\in \P^c(z', \widetilde\eps)$ with $x_i \smile x_i'$ and  $d^c_{z'}(x_i', x_j')>\eta\, \widetilde \eps$ for $i\neq j$. 
 
Let $x_0''\in \W^s_{loc}(x_0)\cap\Lambda$ be the point  satisfying $\W^s_{loc}(x_0') \cap \W^{uu}_{loc}(x_0'')\neq \emptyset$, guaranteed by $x_0\smile x_0'$.
Since  $2\widetilde\eps_1 < 2\beta^{n_1}r_0 < 2\eps_2$, we are in the position to apply Lemma~\ref{l.def-length} to the points $f^{m_1}(x_i) , f^{m_1}(x_i')$; we just need to show that 
the center distances between the points $f^{m_1}(x_i'), f^{m_1}(x_j')$ are
larger than $\widetilde \eta \max_i d^c_{f^{m_1}(z')}(f^{m_1}(z'), f^{m_1}(x_i'))$
and that $\max_i d^c_{f^{m_1}(z')}(f^{m_1}(z'), f^{m_1}(x_i'))$ is larger than
the stable distance between $f^{m_1}(x), f^{m_1}(x'')$ and the unstable distance between $f^{m_1}(z)$ and $f^{m_1}(z')$.
 
For $i\neq j$,  since by assumption $d^{c}_{z'}(x_i',x_j') > \tfrac \eta 2 {\hbox{Length}(f^{k}(\P^c(z',\widetilde\eps )))}$
and as we have controlled the distortion
$c_{m_1}(z',\widetilde\eps)<\widetilde\Delta$,
we obtain
$$d^{c}_{f^{m_1}(z')}(f^{m_1}(x_i'), f^{m_1}(x_j')) >
\tfrac {\eta\widetilde\Delta} 2 {\hbox{Length}(f^{m_1}(\P^c(z',\widetilde\eps )))}
> \tfrac {\eta\widetilde\Delta\widetilde \eps_1} 4
> \widetilde \eta\widetilde\Delta\widetilde \eps_1 .$$
Since $\max_i d^c_{z'}(z', x_i')<\tfrac 1 2 {\hbox{Length}(\P^c(z',\widetilde\eps ))}$, we also get
$$\max_i d^c_{f^{m_1}(z')}(f^{m_1}(z'), f^{m_1}(x_i')) < \tfrac{\widetilde \Delta} 2 {\hbox{Length}(f^{m_1}(\P^c(z',\widetilde\eps )))}
< \widetilde \Delta\widetilde \eps_1.$$

On the other hand,  our choice of $n_1$ implies that   \[d^{s}_{f^{m_1}(z)}(f^{m_1}(x), f^{m_1}(x'')) < {\widetilde\eta} \,\hbox{Length}f^{n_1}(\P^c(z,\widetilde\eps))  = \widetilde\eta\widetilde\eps_1 < {\widetilde \Delta\widetilde \eps_1}.\]   Finally, by our choice of $D_1$, we get $d^{uu}(f^{m_1}(z),f^{m_1}(z'))<\widetilde\eta\widetilde\eps_1 < {\widetilde \Delta\widetilde \eps_1}$. Thus the hypotheses of  Lemma~\ref{l.def-length}  are satisfied. Item (2) gives for $i\neq j$
\[d^c_{f^{m_1}(z)}(f^{m_1}(x_i),f^{m_1}(x_j) )\asymp_{1+\widetilde\delta} d^c_{f^{m_1}(z')}(f^{m_1}(x_i'),f^{m_1}(x_j') ).\]
Taking ratios, iterating by $f^{-m_1}$, and using the distortion controls, we get
\[ \frac{\bar d^c_{z} (x_1,x_0)}{\bar d^c_{z}  (x_2,x_0)} \asymp_{\widetilde\Delta^2(1+\widetilde\delta)^2} \frac{\bar d^c_{z'} (x_1',x_0')}{\bar d^c_{z'}  (x_2',x_0')};\]
 which gives Conclusion  (3), since $\widetilde\Delta^2(1+\widetilde\delta)^2\leq  L$.

\bigskip

\noindent
{\bf Step 2:} {\em Iterate  until  the first time $n_2$ that the inner radius of $f^{n_2}(D_1)$ is greater than  $\tfrac{r_0}{\lambda_0(1+\widetilde\delta)}$.
Hence $D_2\subset f^{n_2}(D_1)$. Prove $n_2\leq N_2$, $\widetilde\eps_2 <\beta^{n_2}r_0$ and
$$\operatorname{Length}(f^{n_2}(\cP^c(w,2\widetilde\eps_1))) < 5\widetilde \eps_2 \text{ and } c_{n_2}(w,2\widetilde\eps_1)<\widetilde\Delta \text{ for all $w\in D_1$.}$$}

\noindent  Here are the details. Since the inner radius of $f^{k}(D_1)$ is at most $\|Df\|^{k}\widetilde\eta \widetilde\eps_1 < \|Df\|^{k}\widetilde\eta \beta^{N_1}r_0$, and since
$\beta^{N_1}\leq \left(\widetilde\eta \lambda_0 (1+\widetilde\delta)\|Df\|^{N_2}\right)^{-1}$, we obtain $n_2\geq N_2$.
  
The definition of $n_2$ and Item (1) of Lemma~\ref{l.def-length} imply, for $0\leq k\leq  n_2$,
\begin{equation*}   \text{\rm InnerRadius}(f^k(D_1)) \leq  \lambda_0^{k-n_2} r_0.
\end{equation*}
Write $\gamma :=  f^{m_1}(\P^c(z ,\widetilde\eps ))$.
The definition of $\nu_0$ gives
$$\frac{\text{\rm Length}(f^k(\gamma))/\text{\rm Length}(\gamma)}{\text{\rm InnerRadius}(f^k(D_1))/ \text{\rm Radius}(D_1)}<\nu_0^k.$$
Since $\text{\rm Radius}(D_1)=\widetilde \eta \;\text{\rm Length}(\gamma)$, we obtain
\begin{eqnarray*}
\text{\rm Length}(f^k(\gamma))& \leq &\frac{\widetilde \eta^{-1}\text{\rm Length}(f^k(\gamma))/\text{\rm Length}(\gamma)}{\text{\rm InnerRadius}(f^k(D_1))/ \text{\rm Radius}(D_1)} \text{\rm InnerRadius}(f^k(D_1))\\
&\leq& \widetilde \eta^{-1} \nu_0^k \lambda_0^{k-n_2} r_0
< \beta^{n_2}r_0 \leq \beta^kr_0,
\end{eqnarray*}
since $n_2\geq N_2$, and $N_2$ was chosen so that  $\widetilde\eta^{-1} \max\{\nu_0,\lambda_0^{-1}\}^{N_2}<\beta^{N_2}$.
 In particular $\widetilde\eps_2 < r_0\beta^{n_2}$ and $c_{n_2}(z_1,2\widetilde\eps_1)< \widetilde\Delta$.

We next need to show that $c_{n_2}(w,2\widetilde\eps_1)<\widetilde\Delta$ for all $w\in D_2$. 

\begin{claim}
For all  $w\in f^{-n_2}(D_2)$, we have
 $c_{n_2}(w, 2\widetilde\eps_1) <\widetilde\Delta$ and
$$\operatorname{Length}(f^{n_2}(\cP^c(w,2\widetilde\eps_1))) < 5\widetilde \eps_2.$$
\end{claim}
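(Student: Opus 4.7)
The plan is to run an induction along the orbit of $w$ that parallels the argument of the preceding claim, but now anchored at the reference orbit of $z_1=f^{m_1}(z)$ rather than of $z$, and with plaque radius $2\widetilde\eps_1$ in place of $\widetilde\eps$. First I will fix $w\in f^{-n_2}(D_2)\subset D_1$ and exploit the exponential decay of unstable distances along the orbit of $D_1$:
\[d^{uu}(f^k(w),f^k(z_1)) \;\leq\; \operatorname{InnerRadius}(f^k(D_1)) \;\leq\; \lambda_0^{k-n_2}r_0, \qquad 0\leq k\leq n_2,\]
a geometric series with total mass of order $r_0$. This is the only place where the defining property of $n_2$ enters.

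Then I will prove by induction on $k\in\{0,\dots,n_2\}$ the length comparison
\[\tfrac 1 2 \operatorname{Length}(f^k(\cP^c(z_1,2\widetilde\eps_1)))
\;\leq\; \operatorname{Length}(f^k(\cP^c(w,2\widetilde\eps_1)))
\;\leq\; 2\operatorname{Length}(f^k(\cP^c(z_1,2\widetilde\eps_1))).\]
At each step, the log-ratio of the next-iterate lengths is bounded by (i) an $\alpha$-H\"older error comparing $\|Df|_{E^c}\|$ at points at unstable distance at most $\lambda_0^{i-n_2}r_0$, and (ii) a $\theta$-H\"older intrinsic distortion along each plaque, controlled by the reference-orbit lengths $\operatorname{Length}(f^i(\cP^c(z_1,2\widetilde\eps_1)))$, which in turn are estimated via the $\mu_0$-bound together with a parallel distortion estimate $c_i(z_1,2\widetilde\eps_1)<\widetilde\Delta$ established along the way. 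Summed in $i$ these contributions become two geometric series whose totals are absorbed by the slack reserved at the start of the proof, exactly what the constraints on $\widetilde\eta$, $r_0$, $\widetilde\Delta$ are designed to deliver. The same $\theta$-H\"older estimate then upgrades the length comparison to the distortion bound $c_{n_2}(w,2\widetilde\eps_1)<\widetilde\Delta$ in the usual way.

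For the length bound itself I use that $\cP^c(z_1,2\widetilde\eps_1)$ contains the curve $\gamma:=f^{m_1}(\cP^c(z,\widetilde\eps))$ of length $\widetilde\eps_1$, whose $f^{n_2}$-image has length exactly $\widetilde\eps_2$ by the definition of $\widetilde\eps_2$. Applying the distortion bound $c_{n_2}(z_1,2\widetilde\eps_1)<\widetilde\Delta$ to transfer the ratio between $\operatorname{Length}(\gamma)$ and $\operatorname{Length}(\cP^c(z_1,2\widetilde\eps_1))$ to the iterated curves bounds $\operatorname{Length}(f^{n_2}(\cP^c(z_1,2\widetilde\eps_1)))$ by a fixed multiple of $\widetilde\eps_2$, and the factor-$2$ comparison then transfers the estimate to $w$. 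The main obstacle I anticipate is precisely the numerical bookkeeping that keeps the product of these multiplicative constants strictly below $5$ after accounting for the two $\widetilde\Delta$-factors and the factor-$2$; this is exactly the role played by the tight choice of $\widetilde\Delta$ close to $1$ and the ordering of the small parameters $\widetilde\eta$ and $r_0$ fixed at the outset of the proof.
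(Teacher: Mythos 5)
Your overall strategy is the same one the paper intends: the paper's own proof of this claim is literally ``identical to the proof of the claim in Step 1,'' i.e.\ an inductive factor-$2$ comparison of lengths along the orbit of $w$ against the reference orbit of $z_1$, using the $\theta$-H\"older intrinsic distortion to control $c_k$ and the $\alpha$-H\"older variation of $E^c$ to control the derivative ratio. Your inductive statement is the right one, and you have correctly identified where the defining property of $n_2$ enters.

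Two points need repair, however. First, the inequality $d^{uu}(f^k(w),f^k(z_1)) \leq \operatorname{InnerRadius}(f^k(D_1))$ is not correct: $f^k(w)$ lies in $f^k(D_1)$, whose inner radius is the largest ball centered at $f^k(z_1)$ contained in it, and a point of $f^k(D_1)$ can be farther from the center than that. The correct estimate uses $f^{n_2}(w)\in D_2$ and iterates Lemma~\ref{l.def-length}(1) backward from time $n_2$, giving $d^{uu}(f^k(w),f^k(z_1))\le (1+\widetilde\delta)^{-1}\lambda_0^{-(n_2-k)}\cdot\tfrac{r_0}{\lambda_0(1+\widetilde\delta)}<\lambda_0^{k-n_2}r_0$; the conclusion you wrote is right, but the stated derivation is not. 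Second, and more substantively, the bookkeeping you describe for the bound $<5\widetilde\eps_2$ omits a factor: $\operatorname{Length}(\cP^c(z_1,2\widetilde\eps_1))=4\widetilde\eps_1$ while $\operatorname{Length}(\gamma)=\widetilde\eps_1$, so the base ratio is $4$, and once you apply the two $\widetilde\Delta$-factors and the factor-$2$ the product is at least $8$, which cannot be pushed below $5$ merely by taking $\widetilde\Delta$ close to $1$. What actually makes the constant work is that, for $w\in f^{-n_2}(D_2)$, the distances $d^{uu}(f^k(w),f^k(z_1))\lesssim\lambda_0^{k-n_2}r_0$ are controlled by $r_0$ rather than by $\widetilde\eta$, so the $\alpha$-H\"older error term becomes $\exp\bigl(C_\alpha r_0^\alpha/(1-\lambda_0^{-\alpha})\bigr)$ and can be made arbitrarily close to $1$ by shrinking $r_0$ (so the ``factor-$2$'' in the induction is in fact a factor much closer to $1$); combined with $\widetilde\Delta$ close to $1$ this gives a bound close to $4\widetilde\eps_2<5\widetilde\eps_2$. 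You should make this explicit, since the paper's listed constraint on $r_0$ is phrased in terms of $C_\theta$, not $C_\alpha$, and the $5$ does not follow from the factor-$2$ statement of the Step 1 claim as you have it.
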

\begin{proof}
The proof is identical to the proof of the claim in Step 1.
\end{proof}
\bigskip

\noindent
{\bf Step 3:} {\em Iterate $\ell_3$ times. Prove $f^{\ell_3}(D_2)\supseteq \W^{uu}(f^{\ell_3}(z_2), R)=D_3$ and
$$c_{\ell_3}(w,5\widetilde\eps_2)<\widetilde\Delta\quad \text{  for all $w\in D_2$.}$$
Finish the proof of the proposition.}

\medskip

Our choice of $\ell_3$ implies that  $f^{\ell_3}(D_2)\supseteq \W^{uu}(f^{\ell_3}(z_2),R)$.  
By Steps 1 and 2, the length of $f^{m}(\P^c(z',\widetilde\eps))$ for $z'\in f^{-m}( \W^{uu}(f^m(z),R))$ is less than
$5\mu_0^{\ell_3}\widetilde \eps_2 < 5\mu_0^{\ell_3} \beta^{n_2}r_0$ which is smaller than $ \eps$, by our choice of $N_2$. This gives Conclusion (1) of  Proposition~\ref{p.waitingdisk}. We also get,  for all $w\in D_2$,
 \[c_{\ell_3}(w, 5\widetilde\eps_2)<\exp(C_\theta\ell_3( 5 \mu_0^{\ell_3}\beta^{n_2}r_0)^{\theta}) < \widetilde\Delta.\]
Since $D \subset f^{-m_2}(D_2) \subset f^{-m_1}(D_1)$, and using for any $z'\in D$
the distortion controls
$c_{m_1}(z',\widetilde \eps')$,
$c_{n_2}(f^{m_1}(z'),2\widetilde \eps_1)$, $c_{\ell_3}(f^{m_2}(z'),5\widetilde \eps_2)$,
we get from the previous steps $c_m(z',\widetilde \eps)<\widetilde \Delta^4<\Delta$. Together with Lemma~\ref{l.basicdistort}
this implies that Conclusion (2) of Proposition~\ref{p.waitingdisk} holds.

Using again $D \subset  f^{-m_1}(D_1)$, we deduce from Step 1 that Conclusion (3) holds.
This completes the proof of Proposition~\ref{p.waitingdisk}.
\end{proof}

\section{A criterion for existence of $cu$-discs: proof of  Theorem~\ref{t=s-transPartHyp}}\label{ss=cucriterion}
We continue to fix a $C^{1+}$ diffeomorphism $f\colon M\to M$ and an $f$-invariant  $uu$-lamination $\Lambda\subset M$ that is $s$-transverse
and satisfies the property SH, retaining the notation of the previous sections in this part of the paper.  In this section, we prove that $\Lambda$ contains a $cu$-disk, establishing Theorem~\ref{t=s-transPartHyp}.

We fix $\tau>0$ small. By Proposition~\ref{p.s-transverse},
the s-transversality property of $\Lambda$ is satisfied at scale $\tau$ for any $x\in \Lambda$, between arcs $\gamma,\gamma'$ contained in the strong unstable plaque $\W^{uu}(x,R)$. Moreover, there exists $\chi>0$
such that
\begin{equation}\label{e.eps0}
\rho^c_{\gamma(0)}(\gamma(0),\gamma'(0))>\chi, \text{ and }
\rho^c_{\gamma(1)}(\gamma(1),\gamma'(1))>\chi.
\end{equation}

Let $K$ be the $\W^{uu}$-section of $\Lambda$ given by the SH property as in \S~\ref{uu-section}.  By enlarging $R$ if necessary, for any $x\in\Lambda$, the  set  $\W^{uu}(x,R)$  meets some point of $K$.
Let $K_0 = \cap_{n\geq 0} f^n(K)$ be its hyperbolic core.
\medskip

We prove by induction the following property:

\begin{proposition}\label{p=evenspaced}
For any $\delta>0$ sufficiently small, there exists $\eps_6=\eps_6(\delta)$
such that for any $\eps\in(0,\eps_6)$ and any $N\geq 0$,
there are  points $x_0,\dots,x_{N}\in \Lambda$  that  satisfy:
 \begin{enumerate}
\item  $x_{N}\in K_0$  and   $x_0, \ldots, x_{N-1}\in \P^c(x_{N})$,
\item  $d^c_{x_N}(x_0,x_{N})\in [\eps/\|Df\|,\eps]$,
 and  
 \item for $0\leq i\leq N$, we have  
${\bar d^c_{x_N} (x_{0},x_{i})}\asymp_{(1+\delta)} { i \;  \bar d^c_{x_N}(x_0,x_{1})}$.
\end{enumerate}

\end{proposition}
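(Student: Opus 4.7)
The plan is to proceed by induction on $N$, adapting the scheme of the simplified proof in Section~\ref{s=simplified proof} but with $\W^{s}$-holonomy replaced by the $suus$-relation $\smile$ and the scale blow-up handled by the waiting-disk Proposition~\ref{p.waitingdisk}. The base case $N=0$ is immediate: pick any $x_0\in K_0$.

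For the inductive step, fix $\delta$ and $\eps$ and choose auxiliary constants $\widetilde\delta\ll\delta$ together with $L,\Delta\in(1,1+\widetilde\delta)$ and $\eta>0$, arranged so that the compounded distortions of a $\W^{uu}$-holonomy (Lemma~\ref{l.holonomy}), a $suus$-identification (Lemma~\ref{l.sholonomy}) and an iteration by $f^m$ (Proposition~\ref{p.waitingdisk}) stay inside the $(1+\delta)$ budget. Proposition~\ref{p.waitingdisk} then yields $\eps_5=\eps_5(\eps,R,\eta,\Delta,L)$, and I take $\widetilde\eps\in(0,\min(\eps_5,\eps_6(\widetilde\delta)))$ small. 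Apply $P(N)$ at scale $\widetilde\eps$ to obtain $y_0,\dots,y_N\in\Lambda$ nearly evenly spaced in $\cP^c(y_N)$ with $y_N\in K_0$; apply Proposition~\ref{p.waitingdisk} with $z=y_N$ to obtain the integer $m$ and the disk $D=f^{-m}(\W^{uu}(f^m(y_N),R))$; and apply Proposition~\ref{p.s-transverse} at $f^m(y_N)\in K_0$ to obtain arcs $\gamma,\gamma'\colon[0,1]\to\W^{uu}(f^m(y_N),R)$ at Hausdorff distance $\tau$ with opposite-sign $\rho^c$-shifts of magnitude at least $\chi$ at the two endpoints.

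The heart of the argument is the topological drift step. For each $t\in[0,1]$, transport the iterated configuration $\{f^m(y_i)\}$ via the $\W^{uu}$-holonomy from $\cP^c(f^m(y_N))$ to $\cP^c(\gamma'(t))$, obtaining nearly evenly spaced points $\bar y'_i(t)$ with $\bar y'_N(t)=\gamma'(t)$; then use the $suus$-relation to find $\bar z_i(t)\in\cP^c(\gamma(t))$ with $\bar y'_i(t)\smile\bar z_i(t)$. After a local choice of orientation of $E^c$ along $\gamma$ (Section~\ref{ss.center-orientation}), the continuous function
\[\phi(t):=\bar d^c_{\gamma(t)}(\bar z_N(t),\gamma(t))\]
changes sign on $[0,1]$ by the crossing clause of s-transversality together with the $\rho^c$-lower-bound $\chi$. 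The intermediate value theorem then selects $t_0$ with $\phi(t_0)=\chi/2$ when $N=0$ and $\phi(t_0)=-\bar d^c_{\gamma(t_0)}(\bar z_N(t_0),\bar z_{N-1}(t_0))$ when $N\geq 1$, mirroring the case split of Section~\ref{s=simplified proof}. The resulting configuration $(\bar z_0(t_0),\dots,\bar z_N(t_0),\gamma(t_0))$ in $\cP^c(\gamma(t_0))$ has spacing $(1+\delta)$-close to uniform, obtained by composing the three distortion controls along the chain $\cP^c(y_N)\to\cP^c(f^m(y_N))\to\cP^c(\gamma'(t_0))\to\cP^c(\gamma(t_0))$ (Proposition~\ref{p.waitingdisk} item (3) for the first step, Lemma~\ref{l.holonomy} for the second, and Lemma~\ref{l.sholonomy} for the third), and its diameter lies in $[\eps/\|Df\|,\eps]$ by item (1) of Proposition~\ref{p.waitingdisk}.

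To finish, I would run the construction along a sequence $\widetilde\eps_\ell\to0$, forcing $m_\ell\to\infty$, and extract a Hausdorff-convergent subsequence. The intermediate points $\bar z_i^\ell(t_0^\ell)$ lie on $\widehat\W^s$-plaques through $\Lambda$ of diameters tending to $0$, so their limits lie in $\Lambda$. The new endpoint $\gamma^\ell(t_0^\ell)$ belongs to $\W^{uu}(f^{m_\ell}(y_N^\ell))\subset\Lambda$ with $f^{m_\ell}(y_N^\ell)\in K_0$, and by arranging the arcs $\gamma_\ell$ to pass close to points of $K\cap\W^{uu}(f^{m_\ell}(y_N^\ell),R)$ provided by the $\W^{uu}$-section property and using $m_\ell\to\infty$, the limit endpoint has arbitrarily long backward orbit controlled by $K$, hence lies in $K_0$ by closedness. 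The main obstacles I expect are, on the combinatorial side, the bookkeeping of distortion constants so that the three compounded transfers remain within the $(1+\delta)$ budget; and on the technical side, making the map $t\mapsto\bar z_i(t)$ continuous despite the $suus$-relation not being a function, together with arranging the limiting step so that $x_{N+1}$ truly belongs to $K_0$ rather than merely to $K$ or $\Lambda$.
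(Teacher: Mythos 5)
Your overall scheme --- induction on $N$; s-transversality arcs $\gamma,\gamma'$; transport the configuration via $\W^{uu}$-holonomy and the $suus$-relation; select $t_0$ by the intermediate value theorem; invoke Proposition~\ref{p.waitingdisk}; take limits along $\widetilde\eps_\ell\to0$ --- is the right one and matches the paper. However, you have reversed the order in which s-transversality and Proposition~\ref{p.waitingdisk} are applied, and this reversal leaves a genuine gap. The paper applies s-transversality directly at $\widetilde x_{N-1}\in K_0$, locates $\widehat z=\gamma(\hat t)$ by the intermediate value argument (a point that is typically \emph{not} in $K_0$), and \emph{then} invokes Proposition~\ref{p.waitingdisk} at $\widehat z$: the disk $D=f^{-m}(\W^{uu}(f^m(\widehat z),R))$ meets $f^{-m}(K)$ because $K$ is a $\W^{uu}$-section, so one selects $z\in D$ with $f^m(z)\in K$ and transports the configuration from $\widehat z$ to $z$ by conclusion~(3). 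The entire role of Proposition~\ref{p.waitingdisk} is to \emph{re-center} at such a $z$; in your order, where the proposition is applied to $y_N$ \emph{before} the drift, the new base point $\gamma(t_0)$ is an uncontrolled point on the arc and there is no mechanism to replace it with a point whose forward orbit stays in $K$. Your remark about ``arranging the arcs to pass close to $K$'' does not address this: the location of $t_0$ is dictated by the intermediate value theorem, not by you.

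More seriously, your argument is missing the final forward iteration $f^n$ (the paper chooses $n>m$ so that $d^c_{f^n(z)}(f^n(\bar x_N),f^n(\bar x_0))\in[\eps/\|Df\|,\eps)$). That single step does three jobs at once: it produces the \emph{lower} bound $\eps/\|Df\|$ on the diameter, which item~(1) of Proposition~\ref{p.waitingdisk} does not give (item (1) is only an upper bound $<\eps$, so your appeal to it for the window is incorrect); it contracts the stable distances --- which are of order $\tau$, coming from the Hausdorff distance between $\gamma$ and $\gamma'$, and in particular do \emph{not} tend to $0$ with $\widetilde\eps_\ell$ as you claim --- so that the limit configuration actually lies in $\Lambda$; and, since $f^n(z)\in K$ for all $n\geq m$ by forward-invariance of $K$ and $n_\ell\to\infty$, it forces the limit base point into $K_0=\bigcap_n f^n(K)$. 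Without the $f^n$ step none of these three conclusions follows and the induction does not close. (Two smaller points: the distortion of $f^m$ along $\cP^c(z')$ is item~(2) of Proposition~\ref{p.waitingdisk}, not item~(3), which is the horizontal $suus$-comparison between $z$ and $z'\in D$; and the small-$N$ base case the paper isolates ($N=1$) still needs a separate, simpler treatment.)
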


\begin{proof}[Proof of Proposition~\ref{p=evenspaced}]Fix $\delta>0$ arbitrarily.
We choose some numbers $L,\Delta,\widetilde \delta>1$ close to $1$, which  will control the  Lipschitz constant of holonomies and  distortion under iteration, such that
\[\Delta^2 L^3(1+\widetilde \delta)^2<1+\delta.\]
We fix $\eps>0$ smaller than $\eps_6:=\min(\chi/\|Df\|,\eps_1)$ where $\eps_1(\Delta)$ is given by Corollary~\ref{c.distortion} applied to the hyperbolic set $K$.

We then prove the property inductively on $N$.
The case $N=0$ is trivial. The case $N=1$ will be explained at the end of the proof.
We now assume the property for $N-1\geq 2$ and prove it for $N$.

Let $\eta = 1/[(1+\widetilde\delta)L{(N+1)}]$, let $\eps_3({R,\eta,L})$, $\eps_4(\eta/L,L)>0$ be given by Lemmas~\ref{l.holonomy}  and~\ref{l.sholonomy} and let $\eps_5(\eps/\|Df\|, R, \eta, \Delta,L)>0$ be given by Proposition~\ref{p.waitingdisk}.
We fix $\widetilde \eps>0$ smaller than $\min\{\eps/\|Df\|, \eps_3,\eps_4,\eps_5,\eps_6(\widetilde \delta)\}$. 
We apply the inductive assumption for $\widetilde \delta,\widetilde \eps$ to get $N$ points
 $\widetilde x_0,\dots, \widetilde x_{N-1}$. They all belong to $\P^c(\widetilde x_{N-1})$, with $\widetilde x_{N-1}\in K_0$.

The s-transversality of $\Lambda$ gives arcs $\gamma,\gamma'\subset \W^{uu}(\widetilde x_{N-1},R)$.
Lemma~\ref{l.plaque} shows that one can parametrize them so that
$\gamma'(t)\in \cP^{cs}(\gamma(t))$ for all $t$.

For each $t$, one can define points $h^{uu}_t(\widetilde x_0),\dots,h^{uu}_t(\widetilde x_{N-1})$ in $\cP^{c}(\gamma'(t))$
as in \S~\ref{ss.uu-holonomy}: one first moves $\widetilde x_i$ by uu-holonomy to some point $\widetilde x'_i(t)\in \W^{uu}(\widetilde x_i)\cap \P^{cs}(\gamma'(t))$;
one then projects along the leaves of $\widehat \W^s_{\gamma'(t)}$ so that $h^{uu}_t(\widetilde x_i)$ belongs to $\widehat \W^s_{\gamma'(t)}(\widetilde x'_i(t))$.

\begin{figure}[h]
\begin{center}
\includegraphics[scale=.23]{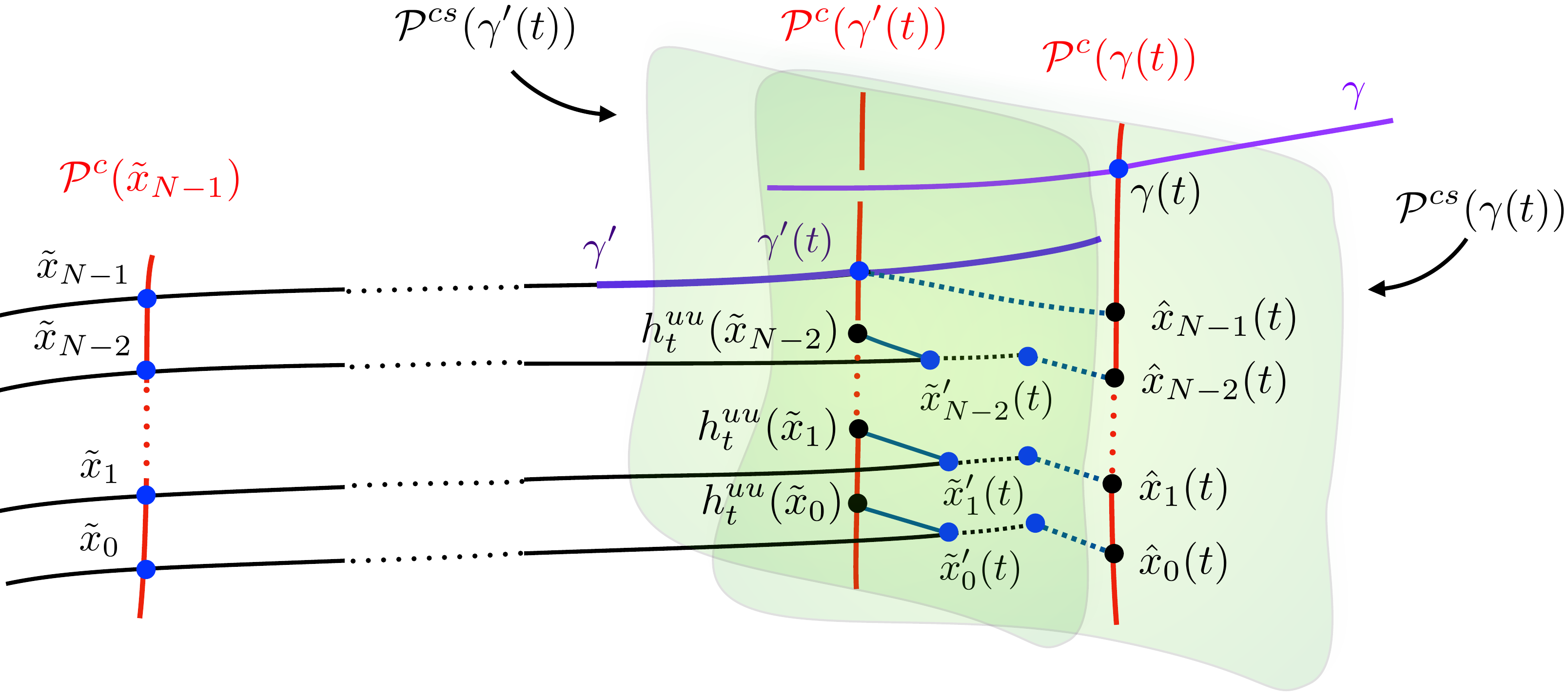}
\caption{Holonomy to two different $\P^{c}$-plaques.}
\label{f=doubleholonomy}
\end{center}
\end{figure}

By Lemma~\ref{l.plaque}, one can also project each $\widetilde x'_i(t)$ to $\P^c(\gamma(t))$ and define a point
$\widehat x_{i}(t)$: this is the unique point in $\P^c(\gamma(t))$ such that
$\widehat \W^s_{\gamma(t)}(\widehat x_{i}(t))$ intersects $\W^{uu}_{loc}(\widetilde x_i'(t))$.
In this way, $h^{uu}_t(\widetilde x_i)$ and $\widehat x_{i}(t)$ are suus-related.
Each point $\widehat x_{i}(t)$ is the image of $\widetilde x_i$ under a composition of two operations:  first the strong unstable holonomy  $h^{uu}_t\colon  \P^c(\widetilde x_{0}) \to  \P^c(\gamma'(t))$, and second, the $suus$-holonomy relation between $\P^c(\gamma'(t))$ and  $\P^c(\gamma(t))$.

Remember that we have assumed $N\geq 2$, so that $\widehat x_{0}(t),\widehat x_{1}(t)$ are defined.
By Items (1) in Lemmas~\ref{l.holonomy} and~\ref{l.sholonomy}, the points $\widehat x_{0}(t)$ and $\widehat x_{1}(t)$ are distinct.
We then consider the function
\[\phi(t) := \frac{\bar d^c_{\gamma(t)}(\widehat x_{N-1}(t),\gamma(t))}{  \bar d^c_{\gamma(t)}(\widehat x_0(t), \widehat x_1(t))}.
\]
Note that it does not depend on the choice of a center orientation and that it is continuous.
The point $\widehat x_{N-1}(t)$ is the projection of $\gamma'(t)=\widetilde x'_{N-1}(t)$ to $\P^c(\gamma(t))$.
We recall that $\gamma'(0)$ and $\gamma'(1)$ are on different sides of $\W^{uu}(\widetilde x_{N-1},R)$ relative to $\gamma$;
hence $\bar d^c_{\gamma(0)}(\widehat x_{N-1}(0),\gamma(0))$, $\bar d^c_{\gamma(1)}(\widehat x_{N-1}(1),\gamma(1))$
have different signs and satisfy
$$|\bar d^c_{\gamma(0)}(\widehat x_{N-1}(0),\gamma(0))|>\chi \text{ and }
|\bar d^c_{\gamma(1)}(\widehat x_{N-1}(1),\gamma(1))|>\chi.$$ 
On the other hand $|\bar d^c_{\gamma(t)}(\widehat x_{0}(t), \widehat x_{1}(t))|$ does not vanish and is smaller than $\chi$.
One deduces that $\phi(0),\phi(1)$ have different signs and have modulus larger than $1$.
In particular there exists $\hat t$ such that $\phi(\hat t)=1$.
We set $\widehat z := \gamma(\hat t)$.

\begin{figure}[h]
\begin{center}
\includegraphics[scale=.23]{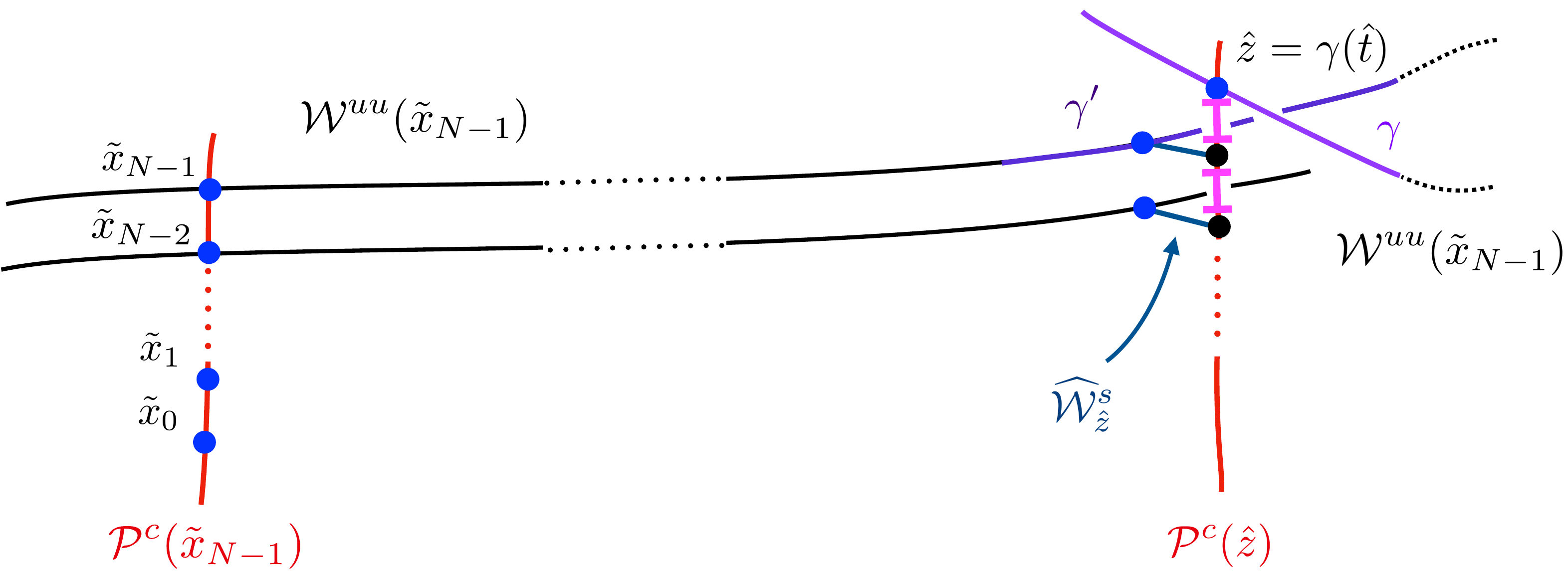}
\caption{Proof of Proposition~\ref{p=evenspaced}: choice of $\widehat z$.}
\label{f=z0choice}
\end{center}
\end{figure}

Consider the points
$\widehat x_N:=\widehat z$ and $\widehat x_i:=\widehat x_i(\widehat t)$.
All the points $h^{uu}_t(\widetilde x_i)$ belong to $\cP^c(\gamma'(t))$, all the points $\widehat x_i$ belong to $\cP^c(\gamma(t))$
and $\gamma'(t)$ belongs to $\cP^{cs}(\gamma(t))$.
By our choice of $\eta$, Items (2) in  Lemmas~\ref{l.holonomy} and~\ref{l.sholonomy}  hold,  and we obtain
for $0\leq i\neq j\neq k< N$
\[
\frac{\bar d^c_{\widehat z}(\widehat x_{i},\widehat x_{j})}{\bar d^c_{\widehat z}( \widehat x_{j},\widetilde x_{k})}
\asymp_{L^2} \frac{ \bar d^c_{\widetilde x_{N-1}}(\widetilde x_{i}, \widetilde x_{j})}{\bar d^c_{\widetilde x_{N-1}}(\widetilde x_{j},\widetilde x_{k})}.
\]
This, our inductive assumption and
$\bar d^c_{\widehat z}(\widehat x_0,\widehat x_1)= \bar d^c_{\widehat z}(\widehat x_{N-1}, \widehat x_N)$
give \begin{equation}\label{e.homogeneity-at-widehat-z}
{ \bar d^c_{\widehat z}(\widehat x_{0},\widehat x_{i})}\asymp_{L^2(1+\widetilde \delta)^2} {i\; \bar d^c_{\widehat z}( \widehat x_{0},\widehat x_{1})}
\text{  for $i=1,\ldots, N$.}
\end{equation}

When $E^c$ is not uniformly expanded we need to move the point $\widehat z=\widehat x_0$.
Proposition~\ref{p.waitingdisk}  implies  that   there exist an integer $m\geq 1$ and a disk $D\subset  \W^{uu}(\widetilde x_{0},R)$ such that  $f^m(D) = \W^{uu}(f^{m}(\widehat z),R)$ and  that for any $z\in D$,  the distortion of $f^m$ on $\cP^c(\widehat z)$ is controlled by $\Delta$.

\begin{claim} For every $\bar x_N:=z\in D$, there exist points $\bar x_0,\dots,\bar x_{N-1}$ in $\P^c(z,\widetilde\eps) \cap \W^s_{loc}(\Lambda)$,  such that
for any $0< i \leq N$, we have
\[{\bar d^c_z( \bar x_0 ,\bar x_{i})} \asymp_{ L^3(1+\widetilde \delta)^2} {i\; \bar d^c_z(\bar x_0, \bar x_{1})} .\]
\end{claim}
\begin{proof}
Fix $z\in D$.
The points $\widehat x_{i} \in \cP^c(\widehat z)$ have been obtained by projecting
points $\widetilde x'_i$ close to $\widehat z$. The points $\bar x_{i}$
are obtained similarly by projecting $\widetilde x'_i$ to $\cP(z)$ using Lemma~\ref{l.plaque},
so that $\widehat \W^s_z(\bar x_{i})$ intersects $\W^u_{loc}(\widetilde x'_i)$.

In this way the points $\widehat x_i$ and $\bar x_i$ are suus-related.
Combining Item (3) in Proposition~\ref{p.waitingdisk} with \eqref{e.homogeneity-at-widehat-z}, one gets the desired inequalities of the claim.
\end{proof}

Next, we select $z\in D$ such that $f^m(z)\in K$ and set $\bar x_N = z$.
We also fix $\bar x_0,\dots,\bar x_{N-1}\in \P^c(\bar x_N,\widetilde\eps) \cap \W^s_{loc}(\Lambda)$ given by the previous claim.
Finally, we choose $n$ so that  $d^c_{f^n(z)}(f^n(\bar x_N), f^{n}(\bar x_{0}))\in [\eps/\|Df\|,\eps)$.

Note that $n > m$ by Item (1) in Proposition~\ref{p.waitingdisk}.
Item (2) in Proposition~\ref{p.waitingdisk} and Corollary~\ref{c.distortion} then imply
\begin{equation*}
\begin{split}
\frac {\bar d^c_{f^n(z)}(f^n\bar x_0,f^n\bar x_{i})} {\bar d^c_{f^n(z)}(f^n\bar x_0,f^n\bar x_{1})} \asymp_{\Delta^2}
\frac { \;\bar d^c_z(\bar x_0, \bar x_{i})}{\bar d^c_z(\bar x_0, \bar x_{1})}.  
 \end{split}\end{equation*}
With our choice of numbers $L,\Delta,\widetilde\delta $, this ensures for any $0< i \leq N$
\[ {\bar d_{f^n(z)}(f^n\bar x_0,f^n\bar x_{i})}\asymp_{(1+\delta)}{i\;\bar d^c_{f^n(z)}(f^n\bar x_0,f^n\bar x_{1})}.\]

For a sequence $\widetilde \eps_k\to 0$, we repeat the construction of points $\bar x_i(k)$  and forward times $n_k\to\infty$.
Extracting convergent subsequences $f^{n_k}(\bar x_i(k))\to x_i$, we obtain the points $x_0,\dots x_N\in \Lambda $ satisfying the Item (3) in Proposition~\ref{p=evenspaced}.
The points $\bar x_0(k),\ldots, \bar x_{N-1}(k) $ belong to the center plaque $\P^c(\bar x_N(k))$ and to the union of local stable manifolds of $\Lambda$,  with $\bar x_{0}(k)\in K$, and so the limits $x_0,\dots,x_N$ belong to $\Lambda$,  with
$x_0\in K_0$,
and lie an interval in  $\P^c(x_0)$ whose length is contained in $[\eps/\|Df\|, \eps]$.
We have thus checked all the inductive assumptions when $N\geq 2$.

In the case $N=1$, the proof is similar but simpler: we select the points $\widehat z$ and then $\bar x_1$ in a similar way as $\bar x_N$, but we do not consider
points $\widetilde x_i$, $i\geq 1$. The projection of some point $\tilde x'_0\in \W^{uu}(\widetilde x_0)$ to $\cP^c(\bar x_0,\widetilde \eps)$
defines the point $\bar x_1\neq \bar x_0$ but we do not need to check the estimates of the claim.
As before the points $x_0,x_1$ are obtained as limits of sequences $f^{n_k}(\bar x_i(k))$.
The proposition is now proved.\end{proof}

Having proved Proposition~\ref{p=evenspaced}, we fix $\delta,\eps>0$ arbitrarily and let $N\to \infty$. Extracting a subsequence of center plaques produced by Proposition~\ref{p=evenspaced}, we obtain a center plaque  $\P^c(x,\eps)\subset\Lambda$, with $x\in K_0$.
Recall that $K_0$ is a compact hyperbolic set so that $\P^c(x,\eps)$ is contained in the unstable set of $x$. In particular, the union $\cD^{cu}(y)$ of  the local $\W^{uu}$ plaques through the points $y\in \P^c(x,\eps)$ is contained in the unstable manifold of $x$, which is tangent at points of $\Lambda$ to $E^{c}\oplus E^{uu}$, and which is contained in $\Lambda$ by construction. This proves that $\Lambda$ contains a $cu$-disk, completing the proof of Theorem~\ref{t=s-transPartHyp}.
\qed

\end{document}